\title{Convergence rates of curved boundary element methods for the 3D Laplace and Helmholtz equations\thanks{Submitted to the editors DATE.}}
\author{Luiz M. Faria\thanks{POEMS, CNRS, Inria, ENSTA, Institut Polytechnique de Paris, 91120 Palaiseau, France.}
\and Pierre Marchand\footnotemark[2]
\and Hadrien Montanelli\thanks{Inria, Unit\'{e} de Math\'{e}matiques Appliqu\'{e}es, ENSTA, Institut Polytechnique de Paris, 91120 Palaiseau, France.}}
\begin{document}

\maketitle

%%%%%%%%%%%%%%%%%%%%%%%%%%%%%%%%%%%%%%%%%%%%%%%%%%%%%%%%%%%%%%%%%%%%%%%%%%%%%%%%%%%%%%%%%%%%%%%%%%%%%%%%%%%%%%%%%%%%%%%%%%%%%%%%%%%%%%%%%%%%%%%%%%%%%%
\begin{abstract}
  We establish improved convergence rates for curved boundary element methods applied to the
  three-dimensional (3D) Laplace and Helmholtz equations with smooth geometry and data. Our analysis relies on a
  precise analysis of the consistency errors introduced by the perturbed bilinear and sesquilinear forms. We illustrate our results with numerical
  experiments in 3D based on basis functions and curved triangular elements up to order four.
\end{abstract}

%%%%%%%%%%%%%%%%%%%%%%%%%%%%%%%%%%%%%%%%%%%%%%%%%%%%%%%%%%%%%%%%%%%%%%%%%%%%%%%%%%%%%%%%%%%%%%%%%%%%%%%%%%%%%%%%%%%%%%%%%%%%%%%%%%%%%%%%%%%%%%%%%%%%%%
\begin{keywords}
  Laplace equation, Helmholtz equation, integral equations, boundary element methods
\end{keywords}

%%%%%%%%%%%%%%%%%%%%%%%%%%%%%%%%%%%%%%%%%%%%%%%%%%%%%%%%%%%%%%%%%%%%%%%%%%%%%%%%%%%%%%%%%%%%%%%%%%%%%%%%%%%%%%%%%%%%%%%%%%%%%%%%%%%%%%%%%%%%%%%%%%%%%%l
\begin{MSCcodes}
  35J05, 45E05, 65N30, 65N38
\end{MSCcodes}

%%%%%%%%%%%%%%%%%%%%%%%%%%%%%%%%%%%%%%%%%%%%%%%%%%%%%%%%%%%%%%%%%%%%%%%%%%%%%%%%%%%%%%%%%%%%%%%%%%%%%%%%%%%%%%%%%%%%%%%%%%%%%%%%%%%%%%%%%%%%%%%%%%%%%%
\section{Introduction}

The Laplace and Helmholtz equations play a fundamental role in mathematical physics and engineering. The Laplace equation governs steady-state phenomena such as electrostatics, gravitation, and fluid flow, while the Helmholtz equation models time-harmonic wave propagation in acoustics, electromagnetics, and elasticity. Solutions to these equations are often efficiently computed by solving the associated boundary integral equations using boundary element methods, which are particularly effective for problems in unbounded domains or with complex geometries. Understanding the accuracy and convergence rates of these methods is crucial for reliable simulations in scientific and industrial applications. The goal of this paper is to provide a detailed analysis of these convergence rates and to support our findings with numerical experiments.

The numerical error in boundary element methods arises from two sources: (i) the
approximation of functions using polynomials of degree $m\geq0$, and (ii) the discretization
of the geometry through elements of order $\ell\geq1$. Since N\'{e}d\'{e}lec's pioneering
work in 1976 \cite{nedelec1976}, progress in this area has been limited, with only a handful
of works---mostly by N\'{e}d\'{e}lec himself and his student Giroire in the late 1970s and
early 1980s \cite{giroire1978b, giroire1982, giroire1978a, nedelec1977}. Related results can also be found in the monograph by Sauter and
Schwab \cite{sauter2011}, in Bendali's work~\cite{bendali1984}, and
in Christiansen's thesis \cite{christiansen2002}. The classical 1976 results of
N\'{e}d\'{e}lec are mentioned in the work of Wendland \cite{wendland1981, wendland1983}, who later extended it to more general elliptic operators \cite{wendland1990}, and in Daurtray and Lions \cite[Chap.~XIII]{dautray1990}. These results are practically important for selecting $m$ and $\ell$ such that the two errors decay at the same rate.

In this paper, we show that for smooth geometries and incident fields, the pointwise error in the numerical solution satisfies
\begin{align}
   & \vert u(\bs{x}) - u_h(\bs{x})\vert \leq c_{\bs{x}} \left(h^{2m+3} + h^{\ell+1}\right), \qquad \text{(Laplace/Helmholtz with single-layer)}, \label{eq:pointwise_single} \\
   & \vert u(\bs{x}) - u_h(\bs{x})\vert \leq c_{\bs{x}} \left(h^{2m+2} + h^{\ell}\right), \qquad \text{(Laplace/Helmholtz with double-layer or CFIE)}, \label{eq:pointwise_double}
\end{align}
where $c_{\bs{x}}>0$ depends on the off-surface evaluation point $\bs{x}$ but not on the mesh size $h$. We also show that using the interpolated normal, rather than the normal to the element, yields an improved $h^{\ell+1}$ geometric error in \cref{eq:pointwise_double}. These convergence rates, which were previously observed in numerical experiments~\cite{montanelli2022, montanelli2024a, pascal2024}, slightly improve upon existing results in the literature, as summarized in \cref{tab:results}. We complement these theoretical rates with extensive numerical experiments using continuous piecewise polynomials and curved triangular meshes up to order four---an investigation that, to the best of our knowledge, has not been previously carried out. We also observe the $h^{\ell+2}$ superconvergence behavior previously reported in~\cite{montanelli2022, montanelli2024a} for even values of~$\ell$. Similar geometric superconvergence for quadratic meshes has also been observed in other contexts~\cite{bonito2018, caubet2024}.

\begin{table}
   \caption{\textit{The published results of N\'ed\'elec and Giroire \cite{giroire1982,nedelec1976} are either not sharp (Laplace single-layer) or provided without proof (Helmholtz single-layer). Sharper results appear in unpublished technical reports \cite{giroire1978b,nedelec1977}, which are not available online. These match \cref{eq:pointwise_single,eq:pointwise_double}, but the proofs are either incomplete (including for the Helmholtz single-layer and Laplace double-layer) or nonexistent (Helmholtz double-layer). Their analysis uses the interpolated normal; however, the results shown here have been adjusted to correspond to the element normal. The estimates in Sauter and Schwab's book \cite{sauter2011} follow from the coefficients in \cite[Tab.~8.2]{sauter2011}, evaluated explicitly in \cite[Cor.~8.2.9]{sauter2011}.}} 
  \centering
  \ra{1.3}
  \begin{tabular}{c|cc}
    \toprule
                                  & Laplace                                                                   & Helmholtz                                                                  \\
    \midrule
    \multirow{3}{*}{Single-layer} & \(h^{m+2} + h^{\ell+1}\), published, proved \cite{nedelec1976}            & \cref{eq:pointwise_single}, published, no proof \cite{giroire1982}         \\
                                  &                                                                           & \cref{eq:pointwise_single}, unpublished, partial proof \cite{giroire1978b} \\
                                  & \(h^{2m+3} + h^{\ell+1/2}\) \cite{sauter2011}                             & \(h^{2m+3} + h^{\ell+1/2}\) \cite{sauter2011}                              \\
    \midrule
    \multirow{2}{*}{Double-layer} & \cref{eq:pointwise_double}, unpublished, partial proof \cite{nedelec1977} & \cref{eq:pointwise_double}, unpublished, no proof \cite{giroire1978b}      \\
                                  & \(h^{2m+2} + h^\ell\vert \log h\vert\) \cite{sauter2011}                  & \(h^{2m+2} + h^\ell\vert\log h\vert\) \cite{sauter2011}                    \\
    \bottomrule
  \end{tabular}
  \label{tab:results}
\end{table}

We focus exclusively on the Dirichlet problem. Sobolev spaces of order $s$ over a domain $\Omega$ and its boundary $\Gamma$ are denoted by $H^s(\Omega)$ and $H^s(\Gamma)$, respectively, with the corresponding norm on $H^s(\Gamma)$ written as $\|\cdot\|_s$. Throughout, $c > 0$ denotes a generic constant that may vary from line to line. To simplify the exposition, we have chosen to remain concrete in our presentation, considering only the single- and double-layer operators, as well as their linear combination, for the Laplace and Helmholtz equations. While we expect many of the results to extend to more general settings, this focused approach has proven effective in deriving our convergence rates.

%%%%%%%%%%%%%%%%%%%%%%%%%%%%%%%%%%%%%%%%%%%%%%%%%%%%%%%%%%%%%%%%%%%%%%%%%%%%%%%%%%%%%%%%%%%%%%%%%%%%%%%%%%%%%%%%%%%%%%%%%%%%%%%%%%%%%%%%%%%%%%%%%%%%%%
\section{Boundary integral operators and discretization}\label{sec:theory}

In this section, we present the tools needed, keeping the discussion as concrete as possible.

%%%%%%%%%%%%%%%%%%%%%%%%%%%%%%%%%%%%%%%%%%%%%%%%%%%%%%%%%%%%%%%%%%%%%%%%%%%%%%%%%%%%%%%%%%%%%%%%%%%%%%%%%%%%%%%%%%%%%%%%%%%%%%%%%%%%%%%%%%%%%%%%%%%%%%
\subsection{Solving the Laplace equation with integral operators}

We start with the Laplace equation. To simplify the exposition and avoid the introduction of weighted Sobolev spaces, we will focus on the \textit{interior} problem.

Let $\Omega\subset\R^3$ be an open, bounded set with a smooth (i.e., $C^\infty$) boundary $\Gamma$, and $f\in H^{1/2}(\Gamma)$ a given function with $f=F|_{\Gamma}$ for some $F\in H^1_{\mrm{loc}}(\R^3)$.

%%%%%%%%%%%%%%%%%%%%%%%%%%%%%%%%%%%%%%%%%%%%%%%%%%%%%%%%%%%%%%%%%%%%%%%%%%%%%%%%%%%%%%%%%%%%%%%%%%%%%%%%%%%%%%%%%%%%%%%%%%%%%%%%%%%%%%%%%%%%%%%%%%%%%%
\begin{problem}[Laplace equation]\label{pb:strong-laplace}
Find $u\in H^1(\Omega)$ such that
\begin{align*}
  \left\{
  \begin{array}{ll}
    \Delta u = 0 \quad \text{in $\Omega$}, \\[0.4em]
    u = f \quad \text{on $\Gamma$}.
  \end{array}
  \right.
\end{align*}
\end{problem}
There is a unique solution to \cref{pb:strong-laplace} \cite{mclean2000}---theoretical results for both the interior and
exterior problems using integral equations go back to N\'{e}d\'{e}lec and Planchard in 1973
\cite[Lem.~1.1]{nedelec1973}. Moreover, if $f\in H^{m+1/2}(\Gamma)$
then $u\in H^{m+1}(\Omega)$ for all integer $m\geq0$ \cite[Thm.~1.2]{nedelec1973}.

We can look for the solution $u$ as a single- or double-layer potential. We start with the former.

%%%%%%%%%%%%%%%%%%%%%%%%%%%%%%%%%%%%%%%%%%%%%%%%%%%%%%%%%%%%%%%%%%%%%%%%%%%%%%%%%%%%%%%%%%%%%%%%%%%%%%%%%%%%%%%%%%%%%%%%%%%%%%%%%%%%%%%%%%%%%%%%%%%%%%
\begin{problem}[Single-layer]\label{pb:laplace-SL}
Find $p\in H^{-1/2}(\Gamma)$ such that
\begin{align*}
  S_0p = f \quad \text{in $H^{1/2}(\Gamma)$},
\end{align*}
with $S_0:H^{-1/2}(\Gamma)\to H^{1/2}(\Gamma)$ defined by
\begin{align}\label{eq:laplace-SL}
  (S_0p)(\bs{x}) = \frac{1}{4\pi}\int_\Gamma\frac{p(\bs{y})}{\vert\bs{x}-\bs{y}\vert}d\Gamma(\bs{y}), \quad \bs{x}\in\Gamma.
\end{align}
The solution $u$ in $\Omega$ reads $u=\mathcal{S}_0p$ with $\mathcal{S}_0:H^{-1/2}(\Gamma)\to H^1(\Omega)$ defined by \cref{eq:laplace-SL} for $\bs{x}\in\Omega$.
\end{problem}

We note that $S_0$ is an isomorphism between $H^{-1/2}(\Gamma)$ and $H^{1/2}(\Gamma)$,\footnote{By an isomorphism, we mean a linear, bounded, bijective map whose inverse is also bounded.} and more generally between $H^s(\Gamma)$ and $H^{s+1}(\Gamma)$ for any $s\in\R$ \cite[Thm.~1.2]{nedelec1973}; see also \cite[Thm.~6.34]{steinbach2008}.

We continue with the double-layer potential.

%%%%%%%%%%%%%%%%%%%%%%%%%%%%%%%%%%%%%%%%%%%%%%%%%%%%%%%%%%%%%%%%%%%%%%%%%%%%%%%%%%%%%%%%%%%%%%%%%%%%%%%%%%%%%%%%%%%%%%%%%%%%%%%%%%%%%%%%%%%%%%%%%%%%%%
\begin{problem}[Double-layer]\label{pb:laplace-DL}
Find $p\in H^{1/2}(\Gamma)$ such that
\begin{align*}
  \left(\frac{I}{2} - D_0\right)p = f \quad \text{in $H^{1/2}(\Gamma)$},
\end{align*}
with $D_0:H^{1/2}(\Gamma)\to H^{1/2}(\Gamma)$ defined by
\begin{align}\label{eq:laplace-DL}
  (D_0p)(\bs{x}) = \frac{1}{4\pi}\int_\Gamma \frac{\partial }{\partial \bs{n}(\bs{y})} \left(\frac{1}{\vert\bs{x}-\bs{y}\vert}\right) p(\bs{y}) d\Gamma(\bs{y}) = \frac{1}{4\pi}\int_\Gamma \frac{(\bs{x}-\bs{y}) \cdot \bs{n}(\bs{y})}{\vert\bs{x}-\bs{y}\vert^3} p(\bs{y}) d\Gamma(\bs{y}), \quad \bs{x}\in\Gamma,
\end{align}
where $\bs{n}(\bs{y})$ denotes the unit normal vector pointing outwards from $\Omega$ at the point $\bs{y}$.
The solution $u$ in $\Omega$ reads $u=-\mathcal{D}_0p$ with $\mathcal{D}_0:H^{1/2}(\Gamma)\to H^1(\Omega)$ defined by \cref{eq:laplace-DL} for $\bs{x}\in\Omega$.
\end{problem}

Here, the operator \((I/2 - D_0)\) is an isomorphism from \(H^{1/2}(\Gamma)\) to itself, and more generally from \(H^s(\Gamma)\) to \(H^s(\Gamma)\) for any \(s \in \mathbb{R}\), since \(\Gamma\) is smooth; see \cite[Thm.~6.34]{steinbach2008}. Therefore, the problem is also well-posed in \(L^2(\Gamma)\), which is the space we choose for our variational formulation in \cref{sec:laplace}. This choice is motivated by the numerical complexity of implementing the \(H^{1/2}(\Gamma)\)-inner product. Since \(f\in H^{1/2}(\Gamma)\), the solutions obtained in \(H^{1/2}(\Gamma)\) and in \(L^2(\Gamma)\) coincide. See also \cite[Rem.~3.8.12]{sauter2011} and \cite[Thm.~2.25]{chandler2012} for related discussions.

%%%%%%%%%%%%%%%%%%%%%%%%%%%%%%%%%%%%%%%%%%%%%%%%%%%%%%%%%%%%%%%%%%%%%%%%%%%%%%%%%%%%%%%%%%%%%%%%%%%%%%%%%%%%%%%%%%%%%%%%%%%%%%%%%%%%%%%%%%%%%%%%%%%%%%
\subsection{Solving the Helmholtz equation with integral operators}

We continue with the Helmholtz equation; we will focus on the \textit{exterior} problem. The
functions are now complex-valued. Let $\Omega\subset\R^3$ be an open, bounded set with a smooth (i.e., $C^\infty$)
boundary $\Gamma$, and $f\in H^{1/2}(\Gamma)$ be a given function with
$f=F|_{\Gamma}$ for some $F\in H^1_\mrm{loc}(\R^3)$. Let $k>0$ be the wavenumber.

%%%%%%%%%%%%%%%%%%%%%%%%%%%%%%%%%%%%%%%%%%%%%%%%%%%%%%%%%%%%%%%%%%%%%%%%%%%%%%%%%%%%%%%%%%%%%%%%%%%%%%%%%%%%%%%%%%%%%%%%%%%%%%%%%%%%%%%%%%%%%%%%%%%%%%
\begin{problem}[Helmholtz equation]\label{pb:strong-helmholtz}
Find $u\in H^1_\mrm{loc}(\R^3\setminus\Omega)$ such that
\begin{align*}
  \left\{
  \begin{array}{ll}
    \Delta u + k^2 u = 0 \quad \text{in $\R^3\setminus\overline{\Omega}$}, \\[0.4em]
    u = f \quad \text{on $\Gamma$},                                        \\[0.4em]
    \text{$u$ is radiating}.
  \end{array}
  \right.
\end{align*}
\end{problem}
The (Sommerfeld) radiation condition in \cref{pb:strong-helmholtz} reads
\begin{align*}
  \lim_{r\to\infty} r\left(\frac{\partial u^s}{\partial r} - iku^s\right) = 0, \quad r = \vert\bs{x}\vert \quad \text{(uniformly in $\bs{x}/\vert\bs{x}\vert$)}.
\end{align*}
There is a unique solution to \cref{pb:strong-helmholtz} \cite{mclean2000}. Moreover, if $f\in H^{m+1/2}(\Gamma)$ then $u\in H^{m+1}_\mrm{loc}(\R^3\setminus\Omega)$ for all integer $m\geq0$ \cite{mclean2000}.

We start with the single-layer potential.

%%%%%%%%%%%%%%%%%%%%%%%%%%%%%%%%%%%%%%%%%%%%%%%%%%%%%%%%%%%%%%%%%%%%%%%%%%%%%%%%%%%%%%%%%%%%%%%%%%%%%%%%%%%%%%%%%%%%%%%%%%%%%%%%%%%%%%%%%%%%%%%%%%%%%%
\begin{problem}[Single-layer]\label{pb:helmholtz-SL}
Find $p\in H^{-1/2}(\Gamma)$ such that
\begin{align*}
  Sp = f \quad \text{in $H^{1/2}(\Gamma)$},
\end{align*}
with $S:H^{-1/2}(\Gamma)\to H^{1/2}(\Gamma)$ defined by
\begin{align}\label{eq:helmholtz-SL}
  (Sp)(\bs{x}) = \frac{1}{4\pi}\int_\Gamma \frac{e^{ik \vert\bs{x}-\bs{y}\vert}}{\vert\bs{x}-\bs{y}\vert} p(\bs{y}) d\Gamma(\bs{y}).
\end{align}
The solution $u$ in $\R^3\setminus\Omega$ reads $u=\mathcal{S}p$ with $\mathcal{S}:H^{-1/2}(\Gamma)\to H^1_\mrm{loc}(\R^3\setminus\Omega)$ defined by \cref{eq:helmholtz-SL}.
\end{problem}

If $k^2$ is not a Dirichlet eigenvalue of $-\Delta$ in $\Omega$, then the operator $S$ is an isomorphism between $H^s(\Gamma)$ and $H^{s+1}(\Gamma)$ for any $s\in\R$ \cite[Thm.~2]{giroire1982}; see also \cite[Thm.~6.34]{steinbach2008}.

We continue with the double-layer potential.

%%%%%%%%%%%%%%%%%%%%%%%%%%%%%%%%%%%%%%%%%%%%%%%%%%%%%%%%%%%%%%%%%%%%%%%%%%%%%%%%%%%%%%%%%%%%%%%%%%%%%%%%%%%%%%%%%%%%%%%%%%%%%%%%%%%%%%%%%%%%%%%%%%%%%%
\begin{problem}[Double-layer]\label{pb:helmholtz-DL}
Find $p\in H^{1/2}(\Gamma)$ such that
\begin{align*}
  \left(\frac{I}{2} + D\right)p = f \quad \text{in $H^{1/2}(\Gamma)$},
\end{align*}
with $D:H^{1/2}(\Gamma)\to H^{1/2}(\Gamma)$ defined by
\begin{align}\label{eq:helmholtz-DL}
  (Dp)(\bs{x}) & = \frac{1}{4\pi}\int_\Gamma \frac{\partial }{\partial \bs{n}(\bs{y})} \left(\frac{e^{ik\vert\bs{x}-\bs{y}\vert}}{\vert\bs{x}-\bs{y}\vert}\right) p(\bs{y}) d\Gamma(\bs{y}), \nonumber  \\
               & = \frac{1}{4\pi}\int_\Gamma(1-ik\vert\bs{x}-\bs{y}\vert)\frac{e^{ik\vert\bs{x}-\bs{y}\vert}}{\vert\bs{x}-\bs{y}\vert^3}(\bs{x}-\bs{y}) \cdot \bs{n}(\bs{y}) p(\bs{y}) d\Gamma(\bs{y}),
\end{align}
where, once again, $\bs{n}(\bs{y})$ denotes the unit normal vector pointing outwards from $\Omega$ at the point $\bs{y}$. The solution $u$ in $\R^3\setminus\Omega$ reads $u=\mathcal{D}p$ with
$\mathcal{D}:H^{1/2}(\Gamma)\to H^1_\mrm{loc}(\R^3\setminus\Omega)$ defined by
\cref{eq:helmholtz-DL}.
\end{problem}

Note that $(I/2+D)$ is an isomorphism between $H^s(\Gamma)$ and $H^s(\Gamma)$ for any $s\in\R$ \cite[Thm.~6.34]{steinbach2008}, as long as $k^2$ is not a Neumann eigenvalue of $-\Delta$ in $\Omega$. We will solve this problem in $L^2(\Gamma)$.

The single- and double-layer potentials can be combined to form the \textit{Combined Field Integral Equation (CFIE)} \cite{brakhage1965}, for some real scalar $\eta>0$. (In practice, one often chooses $\eta=k$.)

%%%%%%%%%%%%%%%%%%%%%%%%%%%%%%%%%%%%%%%%%%%%%%%%%%%%%%%%%%%%%%%%%%%%%%%%%%%%%%%%%%%%%%%%%%%%%%%%%%%%%%%%%%%%%%%%%%%%%%%%%%%%%%%%%%%%%%%%%%%%%%%%%%%%%%
\begin{problem}[CFIE]\label{pb:helmholtz-CFIE}
Find $p\in H^{1/2}(\Gamma)$ such that
\begin{align*}
  \left(\frac{I}{2} + D - i\eta S\right)p = f \quad \text{in $L^2(\Gamma)$},
\end{align*}
with $S:H^{1/2}(\Gamma)\to H^{1/2}(\Gamma)$ and $D:H^{1/2}(\Gamma)\to H^{1/2}(\Gamma)$ defined by \cref{eq:helmholtz-SL} and \cref{eq:helmholtz-DL}. The solution $u$ in $\R^3\setminus\Omega$ reads $u=(\mathcal{D}-i\eta\mathcal{S})p$ with the corresponding $\mathcal{S}$ and $\mathcal{D}$.
\end{problem}

Here, $(I/2+D-i\eta S)$ is an isomorphism between $H^s(\Gamma)$ and $H^s(\Gamma)$ for any $s\in\R$ \cite[Thm.~6.34]{steinbach2008}. We will also solve this problem in $L^2(\Gamma)$.

A summary of the boundary integral formulations introduced above is provided in \cref{tab:integral-operators}.

\begin{table}
  \caption{\textit{Summary of boundary integral formulations and solution representations for Laplace (interior) and Helmholtz (exterior) problems. Columns correspond to single-layer, double-layer, and CFIE formulations. Each row shows: (i) the problem label, (ii) the boundary integral equation, and (iii) the solution representation in the domain.}}
  \centering
  \ra{1.3}
  \begin{tabular}{c|ccc}
    \toprule
                                          & Single-layer           & Double-layer               & CFIE                                     \\
    \midrule
    \multirow{3}{*}{Laplace (interior)}   & \cref{pb:laplace-SL}   & \cref{pb:laplace-DL}       &                                          \\
                                          & $S_0p = f$             & $(\frac{I}{2} - D_0)p = f$ & ---                                      \\
                                          & $u = \mathcal{S}_0p$
                                          & $u = -\mathcal{D}_0p$                                                                          \\
    \midrule
    \multirow{3}{*}{Helmholtz (exterior)} & \cref{pb:helmholtz-SL} & \cref{pb:helmholtz-DL}     & \cref{pb:helmholtz-CFIE}                 \\
                                          & $Sp = f$               & $(\frac{I}{2} + D)p = f$   & $(\frac{I}{2} + D - i\eta S)p = f$       \\
                                          & $u = \mathcal{S}p$     & $u = \mathcal{D}p$         & $u = (\mathcal{D} - i\eta \mathcal{S})p$ \\
    \bottomrule
  \end{tabular}
  \label{tab:integral-operators}
\end{table}

%%%%%%%%%%%%%%%%%%%%%%%%%%%%%%%%%%%%%%%%%%%%%%%%%%%%%%%%%%%%%%%%%%%%%%%%%%%%%%%%%%%%%%%%%%%%%%%%%%%%%%%%%%%%%%%%%%%%%%%%%%%%%%%%%%%%%%%%%%%%%%%%%%%%%%
\subsection{Methodology}

We study the approximation of all previously introduced boundary integral equations using a Galerkin discretization of their weak formulations. Standard tools required to establish the well-posedness of these variational formulations---as well as of their discretized counterparts, both with and without geometric approximation---are recalled in \cref{app:theory}. These include the coercive case (the single-layer potential for the interior Laplace problem) and the ``coercive plus compact'' case (applicable to the remaining problems). The assumptions and theoretical results are summarized in \cref{tab:theory}.

In both settings, we obtain a Strang-type estimate (see \cref{lem:strang-A,lem:strang-B}), which bounds the discretization error by the sum of a Galerkin approximation error (depending on the quality of the discrete approximation space), a consistency error arising from the geometric approximation in the bilinear or sesquilinear form, and a consistency error in the right-hand side. We do not account for the quadrature error introduced in the numerical evaluation of forms.

\begin{table}
  \caption{\textit{We outline the key assumptions for existence and uniqueness in the weak formulation, as well as for the Galerkin and perturbed Galerkin approximation problems. In the coercive case, the discrete coercivity directly follows from the coercivity at the continuous level, while the uniform coercivity can be derived from both coercivity and consistency. In the ``coercive plus compact'' case, the discrete inf-sup conditions follow from coercivity and compactness, while the uniform discrete conditions can be obtained from the discrete ones and consistency.}}
  \centering
  \ra{1.3}
  \begin{tabular}{c|cc}
    \toprule
                                        & Coercive case                         & ``Coercive plus compact'' case   \\
    \midrule
    \multirow{2}*{Weak formulation}     & coercivity                            & coercivity \& compactness        \\
                                        & \cref{thm:lax-milgram} (Lax--Milgram) & \cref{thm:fredholm} (Fredholm)   \\
    \midrule
    \multirow{2}{*}{Galerkin}           & discrete coercivity                   & discrete inf-sup cond.           \\
                                        & \cref{lem:cea} (C\'{e}a)              & \cref{lem:babuska} (Babu\v{s}ka) \\
    \midrule
    \multirow{2}{*}{Perturbed Galerkin} & uniform coercivity                    & uniform discrete inf-sup cond.   \\
                                        & \cref{lem:strang-A} (Strang)          & \cref{lem:strang-B} (Strang)     \\
    \bottomrule
  \end{tabular}
  \label{tab:theory}
\end{table}

%%%%%%%%%%%%%%%%%%%%%%%%%%%%%%%%%%%%%%%%%%%%%%%%%%%%%%%%%%%%%%%%%%%%%%%%%%%%%%%%%%%%%%%%%%%%%%%%%%%%%%%%%%%%%%%%%%%%%%%%%%%%%%%%%%%%%%%%%%%%%%%%%%%%%%
\subsection{Surface discretization}\label{subsec:geometry}

The smooth surface \(\Gamma\) is approximated by a sequence of piecewise polynomial surfaces \(\{\Gamma_h\}_{h>0}\), where each \(\Gamma_h\) consists of triangular elements of polynomial degree \(\ell \geq 1\). Here, \(h > 0\) denotes the mesh size parameter measuring the maximal diameter of the elements in \(\Gamma_h\). (For an explicit construction in the quadratic case, we refer to \cite{montanelli2022}.)  Moreover, we assume the sequence \(\{\Gamma_h\}_{h>0}\) of meshes is \textit{shape-regular}; see \cite[Def.~11.2]{ern2021a} or \cite[Rem.~4.1.14]{sauter2011}.

For every \(\mathbf{x}_h \in \Gamma_h\), let \(\Psi(\mathbf{x}_h) \in \Gamma\) denote its orthogonal projection onto \(\Gamma\). Under the standard assumption that \(\Gamma_h\) converges to \(\Gamma\) as \(h \to 0\) in a sufficiently smooth manner, there exists \(h_0 > 0\) such that the projection map restricted to \(\Gamma_h\),
\[
  \Psi_h := \Psi|_{\Gamma_h} : \Gamma_h \to \Gamma,
\]
is bijective and smooth for all \(h \leq h_0\). We denote by \(\Psi_h^{-1} : \Gamma \to \Gamma_h\) the inverse map (pullback), and by \(J_h^{-1} : \Gamma \to \mathbb{R}\) its Jacobian determinant. See \cite{nedelec1976} and \cref{app:geometry} for details.

%%%%%%%%%%%%%%%%%%%%%%%%%%%%%%%%%%%%%%%%%%%%%%%%%%%%%%%%%%%%%%%%%%%%%%%%%%%%%%%%%%%%%%%%%%%%%%%%%%%%%%%%%%%%%%%%%%%%%%%%%%%%%%%%%%%%%%%%%%%%%%%%%%%%%%
\subsection{Finite element spaces}\label{subsec:FEM}

We now define the discrete finite element spaces used to approximate the boundary integral operators.

\paragraph{Single-layer potential} The natural function space is \(H^{-1/2}(\Gamma)\). Let \(\{V_h\}_{h > 0}\) be a family of finite-dimensional subspaces of \(H^{-1/2}(\Gamma_h)\). We define the lifted discrete spaces
\begin{align*}
  \hat{V}_h = \left\{ \hat{p}_h = p_h \circ \Psi^{-1}_h \,\middle|\, p_h \in V_h \right\} \subset H^{-1/2}(\Gamma).
\end{align*}
We take \(V_h\) to be the space of continuous piecewise polynomial functions of degree \(m \geq 0\) on \(\Gamma_h\), commonly called continuous Lagrange finite elements. These functions are uniquely determined by their values at nodal interpolation points (e.g., vertices, edge midpoints, and possibly interior points) and are globally continuous across element boundaries. For details, see \cite{montanelli2022} for the quadratic case and \cite{ern2021a, sauter2011} for the general case. The functions in \(\hat{V}_h\) are lifted versions of those in \(V_h\); though generally non-polynomial, they share the same smoothness since \(\Psi_h^{-1}\) is smooth.

\paragraph{Double-layer potential} The natural function space is \(L^2(\Gamma)\). Let \(\{V_h\}_{h > 0}\) be a family of finite-dimensional subspaces of \(L^2(\Gamma_h)\). We define the lifted discrete spaces
\begin{align*}
  \hat{V}_h = \left\{ \hat{p}_h = p_h \circ \Psi^{-1}_h \,\middle|\, p_h \in V_h \right\} \subset L^2(\Gamma).
\end{align*}
Again, we take \(V_h\) to consist of continuous piecewise polynomial functions of degree \(m \geq 0\) on \(\Gamma_h\).

%%%%%%%%%%%%%%%%%%%%%%%%%%%%%%%%%%%%%%%%%%%%%%%%%%%%%%%%%%%%%%%%%%%%%%%%%%%%%%%%%%%%%%%%%%%%%%%%%%%%%%%%%%%%%%%%%%%%%%%%%%%%%%%%%%%%%%%%%%%%%%%%%%%%%%
\section{Convergence rates for the Laplace equation}\label{sec:laplace}

We now apply the abstract, theoretical results of \cref{app:theory} to the Laplace equation.

%%%%%%%%%%%%%%%%%%%%%%%%%%%%%%%%%%%%%%%%%%%%%%%%%%%%%%%%%%%%%%%%%%%%%%%%%%%%%%%%%%%%%%%%%%%%%%%%%%%%%%%%%%%%%%%%%%%%%%%%%%%%%%%%%%%%%%%%%%%%%%%%%%%%%%
\subsection{Single-layer potential}

We consider the following weak formulation of \cref{pb:laplace-SL}.

%%%%%%%%%%%%%%%%%%%%%%%%%%%%%%%%%%%%%%%%%%%%%%%%%%%%%%%%%%%%%%%%%%%%%%%%%%%%%%%%%%%%%%%%%%%%%%%%%%%%%%%%%%%%%%%%%%%%%%%%%%%%%%%%%%%%%%%%%%%%%%%%%%%%%%
\begin{problem}[Weak formulation]\label{pb:weak-laplace-SL}
Find $p\in H^{-1/2}(\Gamma)$ such that
\begin{align*}
  b(p,q) = \langle f,q\rangle \quad \forall q\in H^{-1/2}(\Gamma),
\end{align*}
with $b:H^{-1/2}(\Gamma)\times H^{-1/2}(\Gamma)\to\R$ defined by
\begin{align*}
  b(p,q) = \langle S_0p,q\rangle = \frac{1}{4\pi}\int_{\Gamma}\int_{\Gamma}\frac{p(\bs{y})q(\bs{x})}{\vert\bs{x}-\bs{y}\vert}d\Gamma(\bs{y})d\Gamma(\bs{x}).
\end{align*}
We assume \( f \in H^{m+2}(\Gamma) \), so that \( p \in H^{m+1}(\Gamma) \), where \( m \geq 0 \) is the polynomial degree in \( V_h \).
\end{problem}

Since $b$ is coercive \cite[Thm.~1.1]{nedelec1973}, there is a unique solution to \cref{pb:weak-laplace-SL} via Lax--Milgram theorem (\cref{thm:lax-milgram}). In \cref{pb:weak-laplace-SL}, the expression $\langle f, q \rangle$ denotes the duality pairing between $f \in H^{1/2}(\Gamma)$ and $q \in H^{-1/2}(\Gamma)$. When $q\in L^2(\Gamma)$, this pairing is given by the $L^2(\Gamma)$-inner product
\[
  \langle f, q \rangle = (f, q) = \int_\Gamma f(\bs{x}) q(\bs{x}) \, d\Gamma(\bs{x}) \quad \forall f \in H^{1/2}(\Gamma), \ \forall q \in L^2(\Gamma).
\]
By density, this definition extends uniquely and continuously to all $q \in H^{-1/2}(\Gamma)$.

We consider the approximation spaces $V_h$ and $\hat{V}_h$ defined in \cref{subsec:FEM} for the single-layer potential. Finally, let $f_h=s_hf\in V_h$ the $L^2(\Gamma_h)$-projection of $F|_{\Gamma_h}$ onto $V_h$, $\hat{f}_h=f_h\circ\Psi^{-1}_h$, and $(\cdot,\cdot)_h$ denote the $L^2(\Gamma_h)$-inner product. The projection condition reads
\begin{align}\label{eq:projection}
  (F|_{\Gamma_h} - f_h, q_h)_h=0 \quad \forall q_h\in V_h \qquad \Longleftrightarrow \qquad (fJ^{-1}_h - \hat{f}_hJ^{-1}_h, \hat{q}_h)=0 \quad \forall \hat{q}_h\in \hat{V}_h.
\end{align}

The integration on $\Gamma_h$ instead of $\Gamma$ yields the following perturbed Galerkin formulation.

%%%%%%%%%%%%%%%%%%%%%%%%%%%%%%%%%%%%%%%%%%%%%%%%%%%%%%%%%%%%%%%%%%%%%%%%%%%%%%%%%%%%%%%%%%%%%%%%%%%%%%%%%%%%%%%%%%%%%%%%%%%%%%%%%%%%%%%%%%%%%%%%%%%%%%
\begin{problem}[Perturbed Galerkin approximation problem]\label{pb:perturbed-laplace-SL}
Find $p_h\in V_h$ such that
\begin{align*}
  b_h(p_h,q_h) = (f_h,q_h)_h \quad \forall q_h\in V_h,
\end{align*}
with $b_h:H^{-1/2}(\Gamma_h)\times H^{-1/2}(\Gamma_h)\to\R$ defined by
\begin{align*}
  b_h(p,q) = \frac{1}{4\pi}\int_{\Gamma_h}\int_{\Gamma_h}\frac{p(\bs{y})q(\bs{x})}{\vert\bs{x}-\bs{y}\vert}d\Gamma_h(\bs{y})d\Gamma_h(\bs{x}).
\end{align*}
Equivalently, by changing variables with the pullback $\Psi_h^{-1}$, find $\hat{p}_h\in\hat{V}_h$ such that
\begin{align*}
  \hat{b}_h(\hat{p}_h,\hat{q}_h) = (\hat{f}_h J^{-1}_h,\hat{q}_h) \quad \forall \hat{q}_h\in \hat{V}_h,
\end{align*}
with $\hat{b}_h:H^{-1/2}(\Gamma)\times H^{-1/2}(\Gamma)\to\R$ defined by
\begin{align*}
  \hat{b}_h(\hat{p},\hat{q}) = \frac{1}{4\pi}\int_{\Gamma}\int_{\Gamma}\frac{\hat{p}(\bs{y})\hat{q}(\bs{x})J^{-1}_h(\bs{y})J^{-1}_h(\bs{x})}{\vert\Psi^{-1}_h(\bs{x})-\Psi^{-1}_h(\bs{y})\vert}d\Gamma(\bs{y})d\Gamma(\bs{x}).
\end{align*}
\end{problem}

We start by showing consistency.

%%%%%%%%%%%%%%%%%%%%%%%%%%%%%%%%%%%%%%%%%%%%%%%%%%%%%%%%%%%%%%%%%%%%%%%%%%%%%%%%%%%%%%%%%%%%%%%%%%%%%%%%%%%%%%%%%%%%%%%%%%%%%%%%%%%%%%%%%%%%%%%%%%%%%%
\begin{lemma}[Consistency]\label{lem:consistency-laplace-SL}
  There exists \(h_0>0\) such that for all \(h\leq h_0\), the bilinear forms defined in \cref{pb:weak-laplace-SL} and \cref{pb:perturbed-laplace-SL} satisfy the consistency conditions
  \begin{align*}
    \vert b(\hat{p}_h,\hat{q}_h) - \hat{b}_h(\hat{p}_h,\hat{q}_h)\vert \leq c h^{\ell+1} \Vert\hat{p}_h\Vert_{0}\Vert\hat{q}_h\Vert_{0} \quad \forall\hat{p}_h,\hat{q}_h\in\hat{V}_h.
  \end{align*}
\end{lemma}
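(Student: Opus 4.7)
The plan is to recast the bilinear-form difference as a single integral over $\Gamma\times\Gamma$ whose kernel encodes all of the geometric error, then to estimate the resulting integral operator in $L^2(\Gamma)\to L^2(\Gamma)$ operator norm by $c h^{\ell+1}$. The claimed bound then follows from Cauchy--Schwarz, since $\|\cdot\|_0=\|\cdot\|_{L^2(\Gamma)}$. Throughout, I would rely on the standard geometric estimates for the pulled-back surface from \cref{app:geometry}: the pointwise bounds $|J_h^{-1}(\bs x)-1|\leq c h^{\ell+1}$ and $|\Psi_h^{-1}(\bs x)-\bs x|\leq c h^{\ell+1}$, together with the non-degeneracy $|\Psi_h^{-1}(\bs x)-\Psi_h^{-1}(\bs y)|\geq c|\bs x-\bs y|$ valid for $h$ small enough.

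Explicitly, subtracting the two bilinear forms in \cref{pb:weak-laplace-SL} and \cref{pb:perturbed-laplace-SL} gives
\begin{align*}
  b(\hat{p}_h,\hat{q}_h) - \hat{b}_h(\hat{p}_h,\hat{q}_h) = \frac{1}{4\pi}\int_\Gamma\int_\Gamma k_h(\bs x,\bs y)\,\hat{p}_h(\bs y)\,\hat{q}_h(\bs x)\, d\Gamma(\bs y)\, d\Gamma(\bs x),
\end{align*}
with $k_h(\bs x, \bs y) = 1/|\bs x-\bs y| - J_h^{-1}(\bs x)J_h^{-1}(\bs y)/|\Psi_h^{-1}(\bs x)-\Psi_h^{-1}(\bs y)|$. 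I would split $k_h$ by inserting $\pm 1/|\Psi_h^{-1}(\bs x)-\Psi_h^{-1}(\bs y)|$ into a \emph{Jacobian part} $(1-J_h^{-1}(\bs x)J_h^{-1}(\bs y))/|\Psi_h^{-1}(\bs x)-\Psi_h^{-1}(\bs y)|$ and a \emph{distance part} $1/|\bs x-\bs y| - 1/|\Psi_h^{-1}(\bs x)-\Psi_h^{-1}(\bs y)|$.

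The Jacobian part is routine: using $|1-J_h^{-1}(\bs x)J_h^{-1}(\bs y)|\leq c h^{\ell+1}$ together with the non-degeneracy, it is bounded pointwise by $ch^{\ell+1}/|\bs x-\bs y|$, a weakly singular kernel defining an $L^2(\Gamma)$-bounded operator of norm $O(h^{\ell+1})$ by Schur's test (since $\int_\Gamma d\Gamma(\bs y)/|\bs x-\bs y|$ is uniformly bounded on $\Gamma$). The distance part is the real obstacle. The crude estimate $\bigl||\Psi_h^{-1}(\bs x)-\Psi_h^{-1}(\bs y)| - |\bs x-\bs y|\bigr|\leq ch^{\ell+1}$ only yields a pointwise bound of order $ch^{\ell+1}/|\bs x-\bs y|^2$, whose kernel sits at the borderline of integrability on a 2D surface and would cost a logarithmic factor via Schur. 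To reach the sharp $h^{\ell+1}$ rate, one must exploit the geometric cancellation coming from the fact that, since $\Psi_h$ is the orthogonal projection, the displacement $e:=\Psi_h^{-1}-\mrm{id}$ is (to leading order) normal to $\Gamma$, while the secant $\bs x-\bs y$ is (to leading order) tangential. Expanding $|\Psi_h^{-1}(\bs x)-\Psi_h^{-1}(\bs y)|^2 = |\bs x-\bs y|^2 + 2(\bs x-\bs y)\cdot(e(\bs x)-e(\bs y))+|e(\bs x)-e(\bs y)|^2$, the cross term acquires an extra factor of $|\bs x-\bs y|$ from this near-orthogonality, yielding a refined kernel estimate from which one deduces an $O(h^{\ell+1})$ operator norm without log loss.

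With both parts controlled at the $L^2(\Gamma)\to L^2(\Gamma)$ operator level by $ch^{\ell+1}$, Cauchy--Schwarz closes the argument. The main technical difficulty is the cancellation estimate for the distance part; I would carry it out locally in a Monge patch around each $\bs x\in\Gamma$, decomposing $e$ into normal and tangential components relative to $\Gamma$ and combining the pointwise bound $|e|\leq ch^{\ell+1}$ with the finer gradient bound $|\nabla_\Gamma e|\leq ch^\ell$ to control the residual tangential error.
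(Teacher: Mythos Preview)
Your decomposition into a Jacobian part and a distance part, and the final reduction to the $L^2(\Gamma)\times L^2(\Gamma)$ continuity of the weakly singular operator with kernel $|\bs x-\bs y|^{-1}$, match the paper's proof exactly. The difference is that you re-derive the sharp kernel bound for the distance part via the near-orthogonality cancellation argument, whereas the paper simply invokes it as one of the ready-made estimates in \cref{lem:geometry}, namely
\[
\bigl|\,|\bs x-\bs y|^{-1}-|\Psi_h^{-1}(\bs x)-\Psi_h^{-1}(\bs y)|^{-1}\,\bigr|\leq c\,h^{\ell+1}\,|\bs x-\bs y|^{-1}.
\]
Your diagnosis that the crude pointwise bound $|e|\leq ch^{\ell+1}$ alone only gives $ch^{\ell+1}/|\bs x-\bs y|^2$ is correct, and the cancellation mechanism you sketch (normal displacement versus tangential secant) is indeed what underlies the sharp estimate above---but in the paper's organization that work has already been done in the appendix (citing N\'ed\'elec), so the proof of the lemma itself is a two-line application. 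In short: correct and essentially the same route, just with one layer of the argument unpacked that the paper keeps packaged.
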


\begin{proof}
  We write
  \begin{align*}
      & \; b(\hat{p}_h,\hat{q}_h) - \hat{b}_h(\hat{p}_h,\hat{q}_h)                                                                                                                                                                \\
    = & \; \frac{1}{4\pi}\int_{\Gamma}\int_{\Gamma}\hat{p}_h(\bs{y})\hat{q}_h(\bs{x})\left[\frac{1}{\vert\bs{x}-\bs{y}\vert} - \frac{1}{\vert\Psi^{-1}_h(\bs{x})-\Psi^{-1}_h(\bs{y})\vert}\right]d\Gamma(\bs{y})d\Gamma(\bs{x})   \\
      & + \; \frac{1}{4\pi}\int_{\Gamma}\int_{\Gamma}\frac{\hat{p}_h(\bs{y})\hat{q}_h(\bs{x})}{\vert\Psi^{-1}_h(\bs{x})-\Psi^{-1}_h(\bs{y})\vert}\left[1 - J^{-1}_h(\bs{y})J^{-1}_h(\bs{x})\right]d\Gamma(\bs{y})d\Gamma(\bs{x}).
  \end{align*}
  This is now straightforward using the estimates from \cref{lem:geometry},
  \begin{align}\label{eq:consistency}
    \vert b(\hat{p}_h,\hat{q}_h) - \hat{b}_h(\hat{p}_h,\hat{q}_h)\vert \leq ch^{\ell+1}\frac{1}{4\pi}\int_{\Gamma}\int_{\Gamma}\frac{\vert\hat{p}_h(\bs{y})\vert\vert\hat{q}_h(\bs{x})\vert}{\vert\bs{x}-\bs{y}\vert}d\Gamma(\bs{y})d\Gamma(\bs{x}),
  \end{align}
  yielding the result via the continuity of $b$ in $L^2(\Gamma)\times L^2(\Gamma)$.
\end{proof}

Combining \cref{lem:consistency-laplace-SL} with inverse Sobolev inequalities (see \cref{lem:approximation-SL}),
we obtain a consistency estimate in the \(H^{-1/2}(\Gamma)\)-norm. Specifically, there exists \(h_0>0\) such that for all \(h\leq h_0\),
\begin{align}\label{eq:consistency-b}
  \vert b(\hat{p}_h,\hat{q}_h) - \hat{b}_h(\hat{p}_h,\hat{q}_h)\vert \leq c h^{\ell} \Vert\hat{p}_h\Vert_{-1/2}\Vert\hat{q}_h\Vert_{-1/2} \quad \forall\hat{p}_h,\hat{q}_h\in\hat{V}_h.
\end{align}
From this, we deduce the uniform coercivity of \( \hat{b}_h \) on \( H^{-1/2}(\Gamma) \times H^{-1/2}(\Gamma) \) for sufficiently small~\( h \), using \cref{rem:uniform-coercivity}. As pointed out in~\cite[Ex.~8.2.7]{sauter2011}, \cref{eq:consistency-b} cannot be obtained directly from \cref{eq:consistency} and the continuity of \( b \) on \( H^{-1/2}(\Gamma) \times H^{-1/2}(\Gamma) \), since in general, for \( p \in H^{-1/2}(\Gamma) \), $\Vert\vert p\vert\Vert_{-1/2}\neq\Vert p\Vert_{-1/2}$.

The uniform coercivity of $\hat{b}_h$ is the key ingredient for applying Strang's lemma (\cref{lem:strang-A}), which yields the well-posedness of \cref{pb:perturbed-laplace-SL} and the following estimate.

%%%%%%%%%%%%%%%%%%%%%%%%%%%%%%%%%%%%%%%%%%%%%%%%%%%%%%%%%%%%%%%%%%%%%%%%%%%%%%%%%%%%%%%%%%%%%%%%%%%%%%%%%%%%%%%%%%%%%%%%%%%%%%%%%%%%%%%%%%%%%%%%%%%%%%
\begin{theorem}[Intrinsic norm]\label{thm:intrinsic-norm-laplace-SL}
  Let $p$ and $\hat{p}_h$ denote the solutions to \cref{pb:weak-laplace-SL} and \cref{pb:perturbed-laplace-SL} for sufficiently small $h$. Then
  \begin{align*}
    \Vert p - \hat{p}_h\Vert_{-1/2} \leq c\left[h^{m+3/2}\Vert f\Vert_{m+2} + h^{\ell+1/2}\Vert f\Vert_{1}\right].
  \end{align*}
\end{theorem}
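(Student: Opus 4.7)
My strategy is to apply Strang's lemma (\cref{lem:strang-A}) on $H^{-1/2}(\Gamma)$, whose hypotheses are met since the uniform coercivity of $\hat b_h$ for $h$ small enough follows from \cref{eq:consistency-b} combined with the coercivity of $b$, as already indicated via \cref{rem:uniform-coercivity}. This yields well-posedness of \cref{pb:perturbed-laplace-SL} together with an abstract bound of the form
\begin{align*}
  \|p-\hat p_h\|_{-1/2} \leq c\Bigl[\inf_{\hat q_h\in\hat V_h}\Bigl(\|p-\hat q_h\|_{-1/2} + \sup_{\hat w_h\in\hat V_h}\tfrac{|(b-\hat b_h)(\hat q_h,\hat w_h)|}{\|\hat w_h\|_{-1/2}}\Bigr) + \sup_{\hat w_h\in\hat V_h}\tfrac{|\langle f,\hat w_h\rangle-(\hat f_h J_h^{-1},\hat w_h)|}{\|\hat w_h\|_{-1/2}}\Bigr].
\end{align*}
The three terms are estimated separately. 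For the Galerkin approximation, I would pick $\hat q_h$ to be a quasi-interpolant $\hat\pi_h p\in\hat V_h$ and combine the regularity $\|p\|_{m+1}\leq c\|f\|_{m+2}$ (from the isomorphism $S_0:H^{m+1}(\Gamma)\to H^{m+2}(\Gamma)$) with the standard approximation estimate $\|p-\hat\pi_h p\|_{-1/2}\leq c h^{m+3/2}\|p\|_{m+1}$ for piecewise polynomials of degree $m$ on $\hat V_h$.

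\textbf{Bilinear-form consistency.} This is the delicate step. Using \cref{eq:consistency-b} directly would yield only $c h^\ell\|\hat\pi_h p\|_{-1/2}\|\hat w_h\|_{-1/2}$, losing half a power. Instead I would return to the sharper $L^2$-level estimate \cref{eq:consistency}, which, by continuity of $b$ on $L^2(\Gamma)\times L^2(\Gamma)$, gives
\begin{align*}
  |(b-\hat b_h)(\hat\pi_h p,\hat w_h)| \leq c h^{\ell+1}\|\hat\pi_h p\|_{0}\|\hat w_h\|_{0}.
\end{align*}
Taking $\hat\pi_h$ to be the $L^2$-projection (or any stable quasi-interpolant) gives $\|\hat\pi_h p\|_0\leq c\|p\|_0$, and the regularity $\|p\|_0\leq c\|f\|_1$ (since $S_0^{-1}:H^1(\Gamma)\to L^2(\Gamma)$) controls this factor. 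Applying the inverse Sobolev inequality $\|\hat w_h\|_0\leq c h^{-1/2}\|\hat w_h\|_{-1/2}$ from \cref{lem:approximation-SL} \emph{only on the test function}, I obtain
\begin{align*}
  |(b-\hat b_h)(\hat\pi_h p,\hat w_h)| \leq c h^{\ell+1/2}\|f\|_1\|\hat w_h\|_{-1/2}.
\end{align*}

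\textbf{Right-hand side consistency.} Here I would exploit the defining projection property \cref{eq:projection}. Writing
\begin{align*}
  \langle f,\hat w_h\rangle - (\hat f_h J_h^{-1},\hat w_h) = \bigl((f-\hat f_h)J_h^{-1},\hat w_h\bigr) + \bigl(f(1-J_h^{-1}),\hat w_h\bigr),
\end{align*}
the first term vanishes by \cref{eq:projection} since $\hat w_h\in\hat V_h$. For the second, the geometric bound $|1-J_h^{-1}|\leq c h^{\ell+1}$ from \cref{lem:geometry} gives $c h^{\ell+1}\|f\|_0\|\hat w_h\|_0$, which the same inverse inequality converts to $c h^{\ell+1/2}\|f\|_0\|\hat w_h\|_{-1/2}\leq c h^{\ell+1/2}\|f\|_1\|\hat w_h\|_{-1/2}$. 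Collecting the three contributions yields the theorem.

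\textbf{Expected obstacle.} The main pitfall I anticipate is the half-power gap in the bilinear-form consistency: the naive route through \cref{eq:consistency-b} degrades $h^{\ell+1}$ to $h^\ell$. The fix is to stay at the $L^2$-level estimate \cref{eq:consistency}, use the $L^2$-stability of the chosen quasi-interpolant together with $p\in L^2(\Gamma)$, and invoke the inverse inequality only on the free discrete test function. Precisely the same mechanism underlies the right-hand side estimate, where the $L^2(\Gamma_h)$-projection orthogonality \cref{eq:projection} removes the interpolation error of $f$ so that only the geometric error $1-J_h^{-1}$ remains to be absorbed.
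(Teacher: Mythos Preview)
Your proposal is correct and follows essentially the same approach as the paper: apply \cref{lem:strang-A} with the $L^2(\Gamma)$-projection $\hat s_h p$ as trial approximant, bound the Galerkin term via \cref{lem:approximation-SL}, handle bilinear-form consistency by staying at the $L^2\times L^2$ level (\cref{lem:consistency-laplace-SL}) and invoking the inverse inequality only on the test function, and reduce the right-hand side consistency to the geometric term $f(1-J_h^{-1})$ via the projection orthogonality \cref{eq:projection}. Your identification of the ``half-power pitfall'' and its remedy is exactly the mechanism the paper uses.
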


\begin{proof}
  We apply Strang's lemma in the coercive case (\cref{lem:strang-A}) with ``$v_h=\hat{s}_hp$,'' where $\hat{s}_h$ denotes $L^2(\Gamma)$-orthogonal projector onto $\hat{V}_h$. This yields
  \begin{align*}
    \Vert p - \hat{p}_h\Vert_{-1/2} \leq c\hspace{-0.05cm}\left[\sup_{\hat{q}_h\in \hat{V}_h}\frac{\vert(f,\hat{q}_h) - (\hat{f}_hJ^{-1}_h,\hat{q}_h)\vert}{\Vert\hat{q}_h\Vert_{-1/2}} + \Vert p - \hat{s}_hp\Vert_{-1/2}  + \sup_{\hat{q}_h\in\hat{V}_h}\frac{\vert b(\hat{s}_hp,\hat{q}_h) - \hat{b}_h(\hat{s}_hp,\hat{q}_h)\vert}{\Vert\hat{q}_h\Vert_{-1/2}}\right].
  \end{align*}
  For the first term, we write
  \begin{align*}
    (f - \hat{f}_hJ^{-1}_h,\hat{q}_h) = (F|_{\Gamma_h}J_h - f_h, q_h)_h = (F|_{\Gamma_h} - f_h, q_h)_h + (F|_{\Gamma_h}J_h - F|_{\Gamma_h}, q_h)_h.
  \end{align*}
  The first scalar product is zero by orthogonality as described in \cref{eq:projection}, hence
  \begin{align*}
    (f - \hat{f}_hJ^{-1}_h, \hat{q}_h) = (F|_{\Gamma_h}J_h - F|_{\Gamma_h}, q_h)_h = (f - fJ^{-1}_h, \hat{q}_h).
  \end{align*}
  This is bounded by $ch^{\ell+1}\Vert f\Vert_0\Vert\hat{q}_h\Vert_{0}\leq ch^{\ell+1/2}\Vert f\Vert_1\Vert\hat{q}_h\Vert_{-1/2}$ via \cref{lem:geometry} (geometric estimates).

  The bound on the second term follows from the approximation properties in \cref{lem:approximation-SL},
  \begin{align*}
    \Vert p - \hat{s}_hp\Vert_{-1/2} \leq ch^{m+3/2}\Vert p\Vert_{m+1} \leq ch^{m+3/2}\Vert f\Vert_{m+2}.
  \end{align*}

  Finally, for the third term, we use \cref{lem:consistency-laplace-SL} and an inverse Sobolev inequality,
  \begin{align*}
    \vert b(\hat{s}_hp,\hat{q}_h) - \hat{b}_h(\hat{s}_hp,\hat{q}_h)\vert \leq ch^{\ell+1/2}\Vert\hat{s}_hp\Vert_{0}\Vert\hat{q}_h\Vert_{-1/2},
  \end{align*}
  and \cref{lem:approximation-SL} to bound $\Vert\hat{s}_hp\Vert_{0}\leq\Vert p\Vert_{0}\leq c\Vert f\Vert_1$.
\end{proof}

Since the right-hand side in \cref{pb:perturbed-laplace-SL} is given by the \(L^2(\Gamma_h)\)-projection of \(F|_{\Gamma_h}\) onto \(V_h\), no additional term related to the right-hand side appears in the error estimate of \cref{thm:intrinsic-norm-laplace-SL}. In contrast, had we used the Lagrange interpolant of \(F|_{\Gamma_h}\) instead, an extra term accounting for the interpolation error would have arisen, which could dominate the overall error as \(h \to 0\). Finally, since $V_h$ consists of polynomials of degree at most $m$ on triangles, the approximation error in the $H^{-1/2}(\Gamma)$-norm cannot decay faster than $h^{m + 3/2}$, and the error depends on the $(m+1)$-th derivative of $p$ since $V_h$ can approximate at most the first $m$ derivatives.

We can also prove a theorem in the $L^2(\Gamma)$-norm.

%%%%%%%%%%%%%%%%%%%%%%%%%%%%%%%%%%%%%%%%%%%%%%%%%%%%%%%%%%%%%%%%%%%%%%%%%%%%%%%%%%%%%%%%%%%%%%%%%%%%%%%%%%%%%%%%%%%%%%%%%%%%%%%%%%%%%%%%%%%%%%%%%%%%%%
\begin{theorem}[Stronger norm]\label{thm:stronger-norm-laplace-SL}
  Let $p$ and $\hat{p}_h$ denote the solutions to \cref{pb:weak-laplace-SL} and \cref{pb:perturbed-laplace-SL} for sufficiently small $h$. Then
  \begin{align*}
    \Vert p - \hat{p}_h\Vert_{0} \leq c\left[h^{m+1}\Vert f\Vert_{m+2} + h^{\ell}\Vert f\Vert_{1}\right].
  \end{align*}
\end{theorem}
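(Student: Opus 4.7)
The plan is to derive the $L^2$-bound from the $H^{-1/2}$-bound of \cref{thm:intrinsic-norm-laplace-SL} by interpolating through the discrete space and paying one inverse-inequality factor of $h^{-1/2}$. No new variational argument is needed; all the analytic work (uniform coercivity, consistency, geometric estimates) has already been done.

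Concretely, I would split
\begin{align*}
  \Vert p - \hat{p}_h \Vert_0 \leq \Vert p - \hat{s}_h p\Vert_0 + \Vert \hat{s}_h p - \hat{p}_h\Vert_0,
\end{align*}
where $\hat{s}_h$ is the $L^2(\Gamma)$-orthogonal projector onto $\hat{V}_h$. The first term is controlled directly by the approximation properties recalled in \cref{lem:approximation-SL}, giving $\Vert p - \hat{s}_h p\Vert_0 \leq c h^{m+1}\Vert p\Vert_{m+1} \leq c h^{m+1}\Vert f\Vert_{m+2}$, which already has the desired shape.

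The second term is the key step. Since $\hat{s}_h p - \hat{p}_h \in \hat{V}_h$, I would apply an inverse inequality on the lifted discrete space (which holds because $\Psi_h$ is smooth and $\hat V_h$ inherits the inverse estimates of $V_h$ up to constants, as in \cref{lem:approximation-SL}) to get $\Vert \hat{s}_h p - \hat{p}_h\Vert_0 \leq c h^{-1/2}\Vert \hat{s}_h p - \hat{p}_h\Vert_{-1/2}$. A triangle inequality then gives
\begin{align*}
  \Vert \hat{s}_h p - \hat{p}_h\Vert_{-1/2} \leq \Vert p - \hat{s}_h p\Vert_{-1/2} + \Vert p - \hat{p}_h\Vert_{-1/2} \leq c\bigl[h^{m+3/2}\Vert f\Vert_{m+2} + h^{\ell+1/2}\Vert f\Vert_1\bigr],
\end{align*}
where for the first summand I use the $H^{-1/2}$-approximation estimate from \cref{lem:approximation-SL} and for the second I quote \cref{thm:intrinsic-norm-laplace-SL}. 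Multiplying by $h^{-1/2}$ yields exactly $c[h^{m+1}\Vert f\Vert_{m+2} + h^{\ell}\Vert f\Vert_1]$, and combining with the bound on the projection error proves the theorem.

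The only place requiring care is the inverse inequality on $\hat V_h$ (rather than on $V_h$): one must check that the smoothness and boundedness of $\Psi_h$ and $J_h$ established in \cref{lem:geometry} transfer the standard inverse estimate from polynomials on $\Gamma_h$ to lifted functions on $\Gamma$ with an $h$-independent constant. This is routine but is the single nontrivial ingredient; everything else is bookkeeping and invocations of already-proved statements.
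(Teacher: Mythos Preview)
Your proposal is correct and matches the paper's proof essentially line for line: the same splitting through $\hat{s}_h p$, the same approximation bound on $\Vert p - \hat{s}_h p\Vert_0$, and the same inverse-inequality-plus-triangle-inequality argument for $\Vert \hat{s}_h p - \hat{p}_h\Vert_0$ feeding in \cref{thm:intrinsic-norm-laplace-SL}. The inverse inequality on $\hat V_h$ that you flag as the one nontrivial point is exactly what the paper records in \cref{lem:approximation-SL}.
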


\begin{proof}
  We write
  \begin{align*}
    \Vert p - \hat{p}_h\Vert_{0} \leq \Vert p - \hat{s}_hp\Vert_{0} + \Vert\hat{s}_hp - \hat{p}_h\Vert_{0}.
  \end{align*}
  For the first term, from \cref{lem:approximation-SL}, we have $\Vert p - \hat{s}_hp\Vert_{0} \leq c h^{m+1}\Vert p\Vert_{m+1}$. For the second term,
  \begin{align*}
    \Vert\hat{s}_hp - \hat{p}_h\Vert_{0} & \leq ch^{-1/2}\left[\Vert\hat{s}_hp - p\Vert_{-1/2} + \Vert p - \hat{p}_h\Vert_{-1/2}\right] \leq ch^{-1/2}\left[h^{m+3/2}\Vert p\Vert_{m+1} + \Vert p - \hat{p}_h\Vert_{-1/2}\right],
  \end{align*}
  again by \cref{lem:approximation-SL}. We then use \cref{thm:intrinsic-norm-laplace-SL} and $\Vert p\Vert_{m+1}\leq c\Vert f\Vert_{m+2}$ to conclude.
\end{proof}

The main tool to prove pointwise estimates such as those in \cref{eq:pointwise_single} is to first prove results in weaker Sobolev norms. These results rely on duality arguments such as the Aubin--Nitsche lemma, and seem to go back to Hsiao and Wendland \cite{hsiao1981}; see also \cite{sauter2011, schatz1990, sloan1992}.

%%%%%%%%%%%%%%%%%%%%%%%%%%%%%%%%%%%%%%%%%%%%%%%%%%%%%%%%%%%%%%%%%%%%%%%%%%%%%%%%%%%%%%%%%%%%%%%%%%%%%%%%%%%%%%%%%%%%%%%%%%%%%%%%%%%%%%%%%%%%%%%%%%%%%%
\begin{theorem}[Weaker norms]\label{thm:weaker-norm-laplace-SL}
  Let $p$ and $\hat{p}_h$ denote the solutions to \cref{pb:weak-laplace-SL} and \cref{pb:perturbed-laplace-SL} for sufficiently small $h$. Then
  \begin{align*}
    \Vert p - \hat{p}_h\Vert_{-m-2} \leq c\left[h^{2m+3}\Vert f\Vert_{m+2} + h^{\ell+1}\Vert f\Vert_{1}\right].
  \end{align*}
\end{theorem}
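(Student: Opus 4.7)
The plan is to apply an Aubin--Nitsche duality argument in the spirit of Hsiao--Wendland, adapted to the perturbed Galerkin setting. I start from the duality characterization
\[
  \|p - \hat{p}_h\|_{-m-2} = \sup_{g \in H^{m+2}(\Gamma),\,g \neq 0}\frac{|\langle p - \hat{p}_h, g\rangle|}{\|g\|_{m+2}},
\]
and, for each test $g$, introduce the adjoint solution $\psi \in H^{m+1}(\Gamma)$ defined by $S_0\psi = g$ (since $S_0$ is self-adjoint, the adjoint problem coincides with the primal one). The mapping properties of $S_0^{-1}$ give $\|\psi\|_{m+1} \leq c\|g\|_{m+2}$ and $\langle p - \hat{p}_h, g\rangle = b(p - \hat{p}_h, \psi)$.

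I then split $\psi = (\psi - \hat{s}_h\psi) + \hat{s}_h\psi$ and treat the two pieces separately. For $b(p - \hat{p}_h, \psi - \hat{s}_h\psi)$, I combine the $H^{-1/2}\times H^{-1/2}$-continuity of $b$, \cref{thm:intrinsic-norm-laplace-SL} for $\|p - \hat{p}_h\|_{-1/2}$, and the approximation estimate $\|\psi - \hat{s}_h\psi\|_{-1/2} \leq c h^{m+3/2}\|\psi\|_{m+1}$ from \cref{lem:approximation-SL} to get
\[
  |b(p - \hat{p}_h, \psi - \hat{s}_h\psi)| \leq c\bigl[h^{m+3/2}\|f\|_{m+2} + h^{\ell+1/2}\|f\|_1\bigr]\cdot h^{m+3/2}\|g\|_{m+2}.
\]
This produces the desired $h^{2m+3}\|f\|_{m+2}$ contribution together with a cross term $h^{m+\ell+2}\|f\|_1$ that is absorbed into $h^{\ell+1}\|f\|_1$ since $m \geq 0$ and $h \leq 1$.

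For $b(p - \hat{p}_h, \hat{s}_h\psi)$, I exploit the perturbed Galerkin error equation
\[
  b(p - \hat{p}_h, \hat{q}_h) = (f - \hat{f}_h J_h^{-1}, \hat{q}_h) + \bigl[\hat{b}_h(\hat{p}_h, \hat{q}_h) - b(\hat{p}_h, \hat{q}_h)\bigr], \quad \hat{q}_h \in \hat{V}_h,
\]
with $\hat{q}_h = \hat{s}_h\psi$. The first term reduces via the $L^2$-projection identity \cref{eq:projection} to $(f - fJ_h^{-1}, \hat{s}_h\psi)$, which is bounded exactly as in the proof of \cref{thm:intrinsic-norm-laplace-SL} by $c h^{\ell+1}\|f\|_0\|\hat{s}_h\psi\|_0 \leq c h^{\ell+1}\|f\|_1\|g\|_{m+2}$ via \cref{lem:geometry} and $L^2$-stability of $\hat{s}_h$. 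The second term is estimated directly in $L^2$ through \cref{lem:consistency-laplace-SL}, yielding $c h^{\ell+1}\|\hat{p}_h\|_0\|\hat{s}_h\psi\|_0$, where $\|\hat{p}_h\|_0$ is controlled by $\|p\|_0 + \|p - \hat{p}_h\|_0$ using the regularity of $p$ and \cref{thm:stronger-norm-laplace-SL}. Collecting all contributions and dividing by $\|g\|_{m+2}$ yields the claim.

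The main subtlety is to apply the \emph{$L^2$} consistency estimate (\cref{lem:consistency-laplace-SL}) rather than its $H^{-1/2}$ counterpart \cref{eq:consistency-b}: the latter would only produce $h^\ell$ and would destroy the rate. This stronger bound is legitimate here precisely because $\hat{s}_h\psi$ inherits the smoothness of the dual solution $\psi \in H^{m+1}$ and can be measured in $L^2$ \emph{without} invoking an inverse inequality. This asymmetric pairing---weak norm on the error side, strong norm on the dual side---is what converts the usual Aubin--Nitsche doubling of exponents into the advertised $h^{2m+3} + h^{\ell+1}$ rate.
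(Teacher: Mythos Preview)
Your proposal is correct and follows essentially the same route as the paper's proof: duality via $S_0\psi=g$, splitting $\psi=(\psi-\hat{s}_h\psi)+\hat{s}_h\psi$, bounding the first piece with \cref{thm:intrinsic-norm-laplace-SL} and \cref{lem:approximation-SL}, and rewriting the second piece through the perturbed Galerkin error identity to invoke the $L^2\times L^2$ consistency of \cref{lem:consistency-laplace-SL} together with \cref{thm:stronger-norm-laplace-SL}. Your closing remark on why the $L^2$ (rather than $H^{-1/2}$) consistency estimate must be used here is exactly the point.
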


\begin{proof}
  We have
  \begin{align*}
    \Vert p - \hat{p}_h\Vert_{-m-2} = \sup_{g\in H^{m+2}(\Gamma)}\frac{\vert\langle g,p-\hat{p}_h\rangle\vert}{\Vert g\Vert_{m+2}} = \sup_{g\in H^{m+2}(\Gamma)}\frac{\vert b(p-\hat{p}_h,q)\vert}{\Vert g\Vert_{m+2}},
  \end{align*}
  where $q\in H^{-1/2}(\Gamma)$ solves the following dual problem,
  \begin{align*}
    b(p,q) = \langle g,p\rangle \quad \forall p\in H^{-1/2}(\Gamma),
  \end{align*}
  which is the same problem as \cref{pb:weak-laplace-SL} by symmetry of $b$,
  \begin{align*}
    b(p,q) = \langle S_0p,q\rangle = \langle S_0q, p\rangle = b(q,p).
  \end{align*}
  Since $g\in H^{m+2}(\Gamma)$ yields $q\in H^{m+1}(\Gamma)$ and $\Vert q\Vert_{m+1}\leq c\Vert g\Vert_{m+2}$, we arrive at
  \begin{align*}
    \Vert p - \hat{p}_h\Vert_{-m-2} \leq c\sup_{q\in H^{m+1}(\Gamma)}\frac{\vert b(p-\hat{p}_h,q)\vert}{\Vert q\Vert_{m+1}}.
  \end{align*}
  Now, write $b(p-\hat{p}_h,q) = b(p-\hat{p}_h,q-\hat{s}_hq) + b(p-\hat{p}_h,\hat{s}_hq)$. The first term can be bounded as
  \begin{align*}
    \vert b(p-\hat{p}_h,q-\hat{s}_hq)\vert \leq c\Vert p-\hat{p}_h\Vert_{-1/2}\Vert q-\hat{s}_hq\Vert_{-1/2}.
  \end{align*}
  (Recall $\hat{s}_h$ denotes the $L^2(\Gamma)$-projector onto $\hat{V}_h$.) We use \cref{thm:intrinsic-norm-laplace-SL} and \cref{lem:approximation-SL} to obtain
  \begin{align*}
    \vert b(p-\hat{p}_h,q-\hat{s}_hq)\vert \leq c\left[h^{2m+3}\Vert f\Vert_{m+2} + h^{\ell+m+2}\Vert f\Vert_{1}\right]\Vert q\Vert_{m+1}.
  \end{align*}

  For the second term, we write
  \begin{align*}
    b(p-\hat{p}_h,\hat{s}_hq) = & \, b(p,\hat{s}_hq) - \hat{b}_h(\hat{p}_h,\hat{s}_hq) + \hat{b}_h(\hat{p}_h,\hat{s}_hq) - b(\hat{p}_h,\hat{s}_hq).
  \end{align*}
  For the first difference, we observe that $b(p,\hat{s}_hq) = (f,\hat{s}_hq)$ and $\hat{b}_h(\hat{p}_h,\hat{s}_hq) = (\hat{f}_hJ^{-1}_h,\hat{s}_hq)$, hence
  \begin{align*}
    b(p,\hat{s}_hq) - \hat{b}_h(\hat{p}_h,\hat{s}_hq) = (f - \hat{f}_hJ^{-1}_h,\hat{s}_hq).
  \end{align*}
  We can bound this term as in the proof of \cref{thm:intrinsic-norm-laplace-SL},
  \begin{align*}
    \vert(f - \hat{f}_hJ^{-1}_h,\hat{s}_hq)\vert \leq ch^{\ell+1}\Vert f\Vert_0\Vert\hat{s}_hq\Vert_0\leq ch^{\ell+1}\Vert f\Vert_1\Vert q\Vert_{m+1}.
  \end{align*}
  For the second difference, we write
  \begin{align*}
    \vert\hat{b}_h(\hat{p}_h,\hat{s}_hq) - b(\hat{p}_h,\hat{s}_hq)\vert \leq ch^{\ell+1}\Vert\hat{p}_h\Vert_{0}\Vert\hat{s}_hq\Vert_{0}.
  \end{align*}
  Since $\Vert\hat{s}_hq\Vert_{0}\leq \Vert q\Vert_{0}\leq\Vert q\Vert_{m+1}$ and
  \begin{align*}
    \Vert\hat{p}_h\Vert_{0} \leq \Vert p\Vert_{0}  + \Vert p - \hat{p}_h\Vert_{0} \leq c\left[\Vert p\Vert_{0} + h^{m+1}\Vert f\Vert_{m+2} + h^{\ell}\Vert f\Vert_{1}\right],
  \end{align*}
  via \cref{thm:stronger-norm-laplace-SL}, we get a term
  \begin{align*}
    c\left[h^{\ell+1}\Vert f\Vert_{1} + h^{\ell+m+2}\Vert f\Vert_{m+2} + h^{2\ell+1}\Vert f\Vert_{1}\right]\Vert q\Vert_{m+1}.
  \end{align*}
\end{proof}

Once results in weaker norms are established, the following pointwise estimates follow directly. The key is to bound integrals of differences by products of \( H^{-m-2}(\Gamma) \)- and \( H^{m+2}(\Gamma) \)-norms.

%%%%%%%%%%%%%%%%%%%%%%%%%%%%%%%%%%%%%%%%%%%%%%%%%%%%%%%%%%%%%%%%%%%%%%%%%%%%%%%%%%%%%%%%%%%%%%%%%%%%%%%%%%%%%%%%%%%%%%%%%%%%%%%%%%%%%%%%%%%%%%%%%%%%%%
\begin{theorem}[Pointwise evaluation]\label{thm:ptwise-eval-laplace-SL}
  Let $p$ and $\hat{p}_h$ denote the solutions to \cref{pb:weak-laplace-SL} and \cref{pb:perturbed-laplace-SL} for sufficiently small $h$. Then for all $\bs{x}\in\Omega$
  \begin{align*}
     & \vert u(\bs{x}) - u_h(\bs{x})\vert \leq c_{\bs{x}}\left[h^{2m+3}\Vert f\Vert_{m+2} + h^{\ell+1}\Vert f\Vert_{1}\right],
  \end{align*}
  with $c_{\bs{x}}\to\infty$ as $\bs{x}\to\Gamma$.
\end{theorem}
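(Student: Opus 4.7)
The plan is to express both $u(\bs{x})$ and $u_h(\bs{x})$ as integrals over $\Gamma$ and split the difference into a \emph{density} error (with the exact kernel) and a \emph{geometric} error (with $\hat{p}_h$). Writing $u_h(\bs{x}) = \frac{1}{4\pi}\int_{\Gamma_h}\frac{p_h(\bs{y})}{|\bs{x}-\bs{y}|}d\Gamma_h(\bs{y})$ and pulling back via $\Psi_h^{-1}$ gives
\begin{align*}
u(\bs{x}) - u_h(\bs{x})
&= \frac{1}{4\pi}\int_\Gamma \frac{p(\bs{y}) - \hat{p}_h(\bs{y})}{|\bs{x}-\bs{y}|}\,d\Gamma(\bs{y}) \\
&\quad + \frac{1}{4\pi}\int_\Gamma \hat{p}_h(\bs{y})\left[\frac{1}{|\bs{x}-\bs{y}|} - \frac{J^{-1}_h(\bs{y})}{|\bs{x}-\Psi^{-1}_h(\bs{y})|}\right]d\Gamma(\bs{y})
=: T_1 + T_2.
\end{align*}

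For $T_1$, observe that since $\bs{x}\in\Omega$, the kernel $g_{\bs{x}}(\bs{y}) := 1/(4\pi|\bs{x}-\bs{y}|)$ is $C^\infty$ on $\Gamma$, and all its derivatives (hence its $H^{m+2}(\Gamma)$-norm) are controlled by a constant that depends only on $\mrm{dist}(\bs{x},\Gamma)$, blowing up as $\bs{x}\to\Gamma$. Using the duality pairing between $H^{m+2}(\Gamma)$ and $H^{-m-2}(\Gamma)$,
\begin{align*}
|T_1| = |\langle g_{\bs{x}}, p - \hat{p}_h\rangle| \leq \|g_{\bs{x}}\|_{m+2}\,\|p - \hat{p}_h\|_{-m-2} \leq c_{\bs{x}}\bigl[h^{2m+3}\|f\|_{m+2} + h^{\ell+1}\|f\|_1\bigr]
\end{align*}
by \cref{thm:weaker-norm-laplace-SL}, which yields the target bound for this term.

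For $T_2$, the geometric estimates of \cref{lem:geometry} provide $|\bs{y} - \Psi_h^{-1}(\bs{y})|\leq ch^{\ell+1}$ and $|1 - J_h^{-1}(\bs{y})| \leq ch^{\ell+1}$ uniformly on $\Gamma$. For $h$ small enough, $\Psi_h^{-1}(\bs{y})$ remains at distance at least $\tfrac{1}{2}\mrm{dist}(\bs{x},\Gamma)$ from $\bs{x}$, so a first-order Taylor expansion of the smooth function $\bs{z}\mapsto 1/|\bs{x}-\bs{z}|$ between $\bs{y}$ and $\Psi_h^{-1}(\bs{y})$, combined with the Jacobian estimate, gives
\begin{align*}
\left|\frac{1}{|\bs{x}-\bs{y}|} - \frac{J^{-1}_h(\bs{y})}{|\bs{x}-\Psi^{-1}_h(\bs{y})|}\right| \leq c_{\bs{x}}\,h^{\ell+1},
\end{align*}
with $c_{\bs{x}}$ depending only on $\mrm{dist}(\bs{x},\Gamma)^{-2}$. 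Hence $|T_2|\leq c_{\bs{x}} h^{\ell+1}\|\hat{p}_h\|_0$. A triangle inequality followed by \cref{thm:stronger-norm-laplace-SL} yields $\|\hat{p}_h\|_0\leq \|p\|_0 + \|p - \hat{p}_h\|_0 \leq c\|f\|_1 + $ lower-order terms in $h$, so $|T_2|\leq c_{\bs{x}} h^{\ell+1}\|f\|_1$. Combining the estimates for $T_1$ and $T_2$ gives the claim. The only subtle point is making sure that the constants arising from $\|g_{\bs{x}}\|_{m+2}$ and from the pointwise kernel Taylor expansion are uniform in $h$ and depend only on $\bs{x}$; both inverse-power blowups as $\bs{x}\to\Gamma$ are absorbed into the single constant $c_{\bs{x}}$ as claimed.
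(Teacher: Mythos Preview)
Your proof is correct and follows essentially the same approach as the paper: the paper splits your $T_2$ further into a kernel-difference term and a Jacobian-difference term before bounding each by $c_{\bs{x}}h^{\ell+1}\|\hat{p}_h\|_0$, but this is purely cosmetic. Your treatment of $T_1$ via the duality pairing and \cref{thm:weaker-norm-laplace-SL}, and of $\|\hat{p}_h\|_0$ via \cref{thm:stronger-norm-laplace-SL}, matches the paper exactly.
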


\begin{proof}
  Let $\bs{x}\in\Omega$. We write
  \begin{align*}
    u(\bs{x}) - u_h(\bs{x}) = \frac{1}{4\pi}\int_\Gamma\left[\frac{p(\bs{y})}{\vert\bs{x}-\bs{y}\vert}-\frac{\hat{p}_h(\bs{y})J^{-1}_h(\bs{y})}{\vert\bs{x}-\Psi^{-1}_h(\bs{y})\vert}\right]d\Gamma(\bs{y}),
  \end{align*}
  which we split into
  \begin{align*}
    u(\bs{x}) - u_h(\bs{x}) = & \; \frac{1}{4\pi}\int_\Gamma\left[\frac{p(\bs{y})}{\vert\bs{x}-\bs{y}\vert}-\frac{\hat{p}_h(\bs{y})}{\vert\bs{x}-\bs{y}\vert}\right]d\Gamma(\bs{y}) + \frac{1}{4\pi}\int_\Gamma\hat{p}_h(\bs{y})\left[\frac{1}{\vert\bs{x}-\bs{y}\vert}-\frac{1}{\vert\bs{x} -\Psi^{-1}_h(\bs{y})\vert}\right]d\Gamma(\bs{y}) \\
                              & \; + \frac{1}{4\pi}\int_\Gamma\hat{p}_h(\bs{y})\frac{1 - J^{-1}_h(\bs{y})}{\vert\bs{x} -\Psi^{-1}_h(\bs{y})\vert}d\Gamma(\bs{y}).
  \end{align*}
  We bound the first term by
  \begin{align*}
    c\Vert p-\hat{p}_h\Vert_{-m-2}\Vert\vert\bs{x}-\cdot\vert^{-1}\Vert_{m+2}\leq c_{\bs{x}}\Vert p-\hat{p}_h\Vert_{-m-2},
  \end{align*}
  and the other two by $c_{\bs{x}}h^{\ell+1}\Vert\hat{p}_h\Vert_{0}$ with, again,
  \begin{align*}
    \Vert\hat{p}_h\Vert_{0} \leq \Vert p\Vert_{0}  + \Vert p - \hat{p}_h\Vert_{0} \leq c\left[\Vert p\Vert_{0} + h^{m+1}\Vert f\Vert_{m+2} + h^{\ell}\Vert f\Vert_{1}\right],
  \end{align*}
  and $c_{\bs{x}}\to\infty$ as $\bs{x}\to\Gamma$.
\end{proof}

To conclude this section, let us take a step back and compare our results with those in \cref{tab:results}. For the intrinsic norm, \cref{thm:intrinsic-norm-laplace-SL} matches the results of \cite[Thm.~2.1]{nedelec1976} and \cite[Thm.~4.2.11]{sauter2011}. However, in the case of weaker norms (\cref{thm:weaker-norm-laplace-SL}), the estimate in \cite[Thm.~4.2.19]{sauter2011} loses a factor of \(\sqrt{h}\) in the geometric error. This loss comes from relying on consistency in \(L^2(\Gamma) \times H^{-1/2}(\Gamma)\) rather than \(L^2(\Gamma) \times L^2(\Gamma)\), and from applying an inverse Sobolev inequality (see \cite[Cor.~8.2.6]{sauter2011}). In contrast, N\'{e}d\'{e}lec's work (\cite[Thm.~2.2]{nedelec1976}) provides a sharp geometric error in weaker norms, but the estimate loses a factor \(h^{m+1}\) because it does not fully exploit the regularity of the problem.

%%%%%%%%%%%%%%%%%%%%%%%%%%%%%%%%%%%%%%%%%%%%%%%%%%%%%%%%%%%%%%%%%%%%%%%%%%%%%%%%%%%%%%%%%%%%%%%%%%%%%%%%%%%%%%%%%%%%%%%%%%%%%%%%%%%%%%%%%%%%%%%%%%%%%%
\subsection{Double-layer potential}

We consider the following weak formulation of \cref{pb:laplace-DL}.

%%%%%%%%%%%%%%%%%%%%%%%%%%%%%%%%%%%%%%%%%%%%%%%%%%%%%%%%%%%%%%%%%%%%%%%%%%%%%%%%%%%%%%%%%%%%%%%%%%%%%%%%%%%%%%%%%%%%%%%%%%%%%%%%%%%%%%%%%%%%%%%%%%%%%%
\begin{problem}[Weak formulation]\label{pb:weak-laplace-DL}
Find $p\in L^{2}(\Gamma)$ such that
\begin{align*}
  b(p,q) = (f, q) \quad \forall q\in L^{2}(\Gamma),
\end{align*}
with $b:L^{2}(\Gamma)\times L^{2}(\Gamma)\to\R$ defined by $b(p,q) = (p,q)/2 - (D_0p,q)$, that is,
\begin{align*}
  b(p,q) = \frac{1}{2} \int_{\Gamma} p(\bs{x})q(\bs{x}) d\Gamma(\bs{x}) -\frac{1}{4\pi}\int_\Gamma \int_{\Gamma}\frac{\partial }{\partial \bs{n}(\bs{y})} \left(\frac{1}{\vert\bs{x}-\bs{y}\vert}\right) p(\bs{y})q(\bs{x}) d\Gamma(\bs{y})d\Gamma(\bs{x}).
\end{align*}
We assume \( f \in H^{m+1}(\Gamma) \), so that \( p \in H^{m+1}(\Gamma) \), where \( m \geq 0 \) is the polynomial degree in \( V_h \).
\end{problem}

Since $b$ is injective and can be rewritten as $b(p, q) = a(p, q) + t(p, q)$ with
\[
a(p, q) = \frac{1}{2}(p, q) \quad \text{(coercive in $L^2(\Gamma)$)}, \qquad
t(p, q) = -(D_0 p, q) \quad \text{(compact in $L^2(\Gamma)$)},
\]
there is a unique solution to \cref{pb:weak-laplace-DL} by Fredholm's alternative (\cref{thm:fredholm}). The compactness of \(D_0\) in $L^2(\Gamma)$ is guaranteed by the smoothness of $\Gamma$; see \cite{chandler2012} and the references therein.

Before continuing, we note that
\begin{align*}
  b(p,q) = (p,q)+\frac{1}{4\pi}\int_\Gamma\int_\Gamma (p(\bs{x}) - p(\bs{y})) q(\bs{x})\frac{(\bs{x}-\bs{y})\cdot \bs{n}(\bs{y})}{\vert\bs{x}-\bs{y}\vert^3} d\Gamma(\bs{y})d\Gamma(\bs{x}).
\end{align*}
It follows from~\cite[eq.~(3.34)]{nedelec1977}:
\begin{align*}
  \frac{1}{2} = -\frac{1}{4\pi}\int_{\Gamma}\frac{\partial}{\partial \bs{n}(\bs{y})} \left(\frac{1}{\vert\bs{x}-\bs{y}\vert}\right)d\Gamma(\bs{y}) \quad \forall\bs{x}\in\Gamma.
\end{align*}

We consider the approximation spaces $V_h$ and $\hat{V}_h$ defined in \cref{subsec:FEM} for the double-layer potential. Again, let $f_h=s_hf\in V_h$ the $L^2(\Gamma_h)$-projection of $F|_{\Gamma_h}$ onto $V_h$ and $\hat{f}_h=f_h\circ\Psi^{-1}_h$. Finally, we denote by \(\bs{n}_h(\bs{y})\) the unit normal vector pointing outward from the curved element at the point \(\bs{y}\) and \(\bs{\nu}_h(\bs{y})\) the interpolated normal. We also define the lifted normals \(\hat{\bs{n}}_h = \bs{n}_h \circ \Psi_h^{-1}\) and \(\hat{\bs{\nu}}_h = \bs{\nu}_h \circ \Psi_h^{-1}\). Note that \(\bs{\nu}_h(\bs{y})\) is defined by interpolating the true normal at the boundary element degrees of freedom on each curved element.

%%%%%%%%%%%%%%%%%%%%%%%%%%%%%%%%%%%%%%%%%%%%%%%%%%%%%%%%%%%%%%%%%%%%%%%%%%%%%%%%%%%%%%%%%%%%%%%%%%%%%%%%%%%%%%%%%%%%%%%%%%%%%%%%%%%%%%%%%%%%%%%%%%%%%%
\begin{problem}[Perturbed Galerkin approximation problem]\label{pb:perturbed-laplace-DL}
Find $\hat{p}_h\in\hat{V}_h$ such that
\begin{align*}
  \hat{b}_h(\hat{p}_h,\hat{q}_h) = (\hat{f}_hJ^{-1}_h,\hat{q}_h) \quad \forall \hat{q}_h\in \hat{V}_h,
\end{align*}
with $\hat{b}_h:L^{2}(\Gamma)\times L^{2}(\Gamma)\to\R$ defined by
\begin{align*}
  \hat{b}_h(\hat{p},\hat{q}) = & \int_{\Gamma} \hat{p}(\bs{x})\hat{q}(\bs{x})J^{-1}_h(\bs{x})d\Gamma(\bs{x})                                                                                                                                                                                                                      \\
                               & + \frac{1}{4\pi}\int_{\Gamma} \int_{\Gamma} (\hat{p}(\bs{x}) - \hat{p}(\bs{y})) \hat{q}(\bs{x})\frac{(\Psi^{-1}_h(\bs{x})-\Psi^{-1}_h(\bs{y}))\cdot \hat{\bs{n}}_h(\bs{y})}{\vert\Psi^{-1}_h(\bs{x})-\Psi^{-1}_h(\bs{y})\vert^3}J^{-1}_h(\bs{y})J^{-1}_h(\bs{x}) d\Gamma(\bs{y})d\Gamma(\bs{x}).
\end{align*}
\end{problem}

The bilinear form in \cref{pb:perturbed-laplace-DL} uses the (lifted) curved-element normal \(\hat{\bs{n}}_h\), as is standard in boundary element codes. For the analysis, we will also consider the (lifted) interpolated normal \(\hat{\bs{\nu}}_h\).

%%%%%%%%%%%%%%%%%%%%%%%%%%%%%%%%%%%%%%%%%%%%%%%%%%%%%%%%%%%%%%%%%%%%%%%%%%%%%%%%%%%%%%%%%%%%%%%%%%%%%%%%%%%%%%%%%%%%%%%%%%%%%%%%%%%%%%%%%%%%%%%%%%%%%%
\begin{lemma}[Consistency]\label{lem:consistency-laplace-DL}
  There exists \(h_0>0\) such that for all \(h\leq h_0\), the bilinear forms defined in \cref{pb:weak-laplace-DL} and \cref{pb:perturbed-laplace-DL} satisfy the consistency conditions
  \begin{align*}
    \vert b(\hat{p}_h,\hat{q}_h) - \hat{b}_h(\hat{p}_h,\hat{q}_h)\vert \leq c_\epsilon h^{\ell} \Vert\hat{p}_h\Vert_{\epsilon}\Vert\hat{q}_h\Vert_{0} \quad \forall \epsilon\in(0,1), \; \forall\hat{p}_h,\hat{q}_h\in\hat{V}_h,
  \end{align*}
  with $c_\epsilon\to\infty$ as $\epsilon\to0$. Using the interpolated normal \(\hat{\bs{\nu}}_h\) improves the geometric error to \(h^{\ell+1}\).
\end{lemma}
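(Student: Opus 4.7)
The plan is to first rewrite both bilinear forms using the identity stated just before \cref{pb:perturbed-laplace-DL}, so that $b$ and $\hat b_h$ appear as a mass term plus a double integral acting on the divided difference $\hat p_h(\bs{x}) - \hat p_h(\bs{y})$. This reformulation is crucial: the kernel $(\bs{x}-\bs{y})\cdot\bs{n}(\bs{y})/\vert\bs{x}-\bs{y}\vert^3$ is only weakly singular thanks to the cancellation $(\bs{x}-\bs{y})\cdot\bs{n}(\bs{y}) = O(\vert\bs{x}-\bs{y}\vert^2)$ on a smooth surface; geometric perturbation partially destroys this cancellation at order $h^\ell$ (resp.\ $h^{\ell+1}$ with the interpolated normal), and the extra regularity carried by the divided difference is precisely what absorbs the resulting stronger singularity.

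First I would decompose
\[
  b(\hat p_h,\hat q_h) - \hat b_h(\hat p_h,\hat q_h) = E_0 + E_K,
\]
where $E_0 = \int_\Gamma \hat p_h \hat q_h (1 - J_h^{-1})\,d\Gamma$ is bounded at once by $c h^{\ell+1}\Vert\hat p_h\Vert_0\Vert\hat q_h\Vert_0$ using $\vert 1-J_h^{-1}\vert\leq ch^{\ell+1}$ from \cref{lem:geometry}, and $E_K$ is the double integral of $(\hat p_h(\bs{x})-\hat p_h(\bs{y}))\hat q_h(\bs{x})\,\Delta_h(\bs{x},\bs{y})$, with $\Delta_h$ the difference between the exact kernel and the perturbed one (including the Jacobian factors). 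Using \cref{lem:geometry}, I would show pointwise that $\vert\Delta_h(\bs{x},\bs{y})\vert \leq c h^\ell / \vert\bs{x}-\bs{y}\vert^2$. The borderline contribution comes from replacing $\bs{n}(\bs{y})$ by $\hat{\bs{n}}_h(\bs{y})$: since $(\bs{x}-\bs{y})\cdot(\bs{n}(\bs{y})-\hat{\bs{n}}_h(\bs{y})) = O(h^\ell \vert\bs{x}-\bs{y}\vert)$, this yields a term of order $h^\ell/\vert\bs{x}-\bs{y}\vert^2$, which dominates the $O(h^{\ell+1})$ corrections arising from $\vert\Psi_h^{-1}(\bs{x})-\Psi_h^{-1}(\bs{y})\vert - \vert\bs{x}-\bs{y}\vert$ and from the Jacobian factors. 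Replacing $\hat{\bs{n}}_h$ by the interpolated normal $\hat{\bs{\nu}}_h$, for which \cref{lem:geometry} provides $\vert\bs{n}-\hat{\bs{\nu}}_h\vert\leq c h^{\ell+1}$, promotes the bound to $h^{\ell+1}/\vert\bs{x}-\bs{y}\vert^2$, giving exactly the improvement claimed in the lemma.

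With the pointwise kernel estimate in hand, I would then bound
\[
  \vert E_K\vert \leq c h^{\ell} \iint_{\Gamma\times\Gamma} \frac{\vert\hat p_h(\bs{x})-\hat p_h(\bs{y})\vert\vert\hat q_h(\bs{x})\vert}{\vert\bs{x}-\bs{y}\vert^2}\, d\Gamma(\bs{y})d\Gamma(\bs{x})
\]
by splitting $\vert\bs{x}-\bs{y}\vert^{-2} = \vert\bs{x}-\bs{y}\vert^{-1-\epsilon}\cdot\vert\bs{x}-\bs{y}\vert^{-1+\epsilon}$ and applying Cauchy--Schwarz. The first factor pairs with $\vert\hat p_h(\bs{x})-\hat p_h(\bs{y})\vert$ to form the Slobodetskij seminorm controlled by $\Vert\hat p_h\Vert_\epsilon$; the second factor integrated against $\vert\hat q_h(\bs{x})\vert^2$ produces $\Vert\hat q_h\Vert_0^2$ times $\sup_{\bs{x}}\int_\Gamma \vert\bs{x}-\bs{y}\vert^{-2+2\epsilon}\,d\Gamma(\bs{y})$, which is finite for $\epsilon>0$ but diverges as $\epsilon\to 0^+$, giving the advertised constant $c_\epsilon$ with the required blow-up.

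The main obstacle is the pointwise kernel analysis for $\Delta_h$: while the individual perturbation estimates from \cref{lem:geometry} are straightforward, confirming that the worst term is exactly of order $h^\ell/\vert\bs{x}-\bs{y}\vert^2$ (rather than, for instance, $h^\ell/\vert\bs{x}-\bs{y}\vert^3$, which would be non-integrable even against the divided difference and would break the argument) requires carefully tracking how the cancellation $(\bs{x}-\bs{y})\cdot\bs{n}(\bs{y}) = O(\vert\bs{x}-\bs{y}\vert^2)$ is transferred to the perturbed numerator $(\Psi_h^{-1}(\bs{x})-\Psi_h^{-1}(\bs{y}))\cdot\hat{\bs{n}}_h(\bs{y})$. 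This careful bookkeeping is also precisely where the distinction between $\hat{\bs{n}}_h$ and $\hat{\bs{\nu}}_h$ becomes decisive for the gain of one extra power of $h$.
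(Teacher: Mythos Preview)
Your proposal is correct and follows essentially the same route as the paper: the same mass/kernel split, the same four-contribution analysis of the kernel perturbation identifying the normal replacement as the only $h^\ell$ term (all others being $h^{\ell+1}$), and the same Cauchy--Schwarz splitting $\vert\bs{x}-\bs{y}\vert^{-2}=\vert\bs{x}-\bs{y}\vert^{-(1+\epsilon)}\vert\bs{x}-\bs{y}\vert^{-(1-\epsilon)}$ pairing the divided difference with the Sobolev--Slobodeckij seminorm. The paper makes the kernel decomposition explicit term-by-term, but your sketch already names each ingredient and the obstacle you flag (transferring the $(\bs{x}-\bs{y})\cdot\bs{n}(\bs{y})=O(\vert\bs{x}-\bs{y}\vert^2)$ cancellation to the perturbed numerator so that no $\vert\bs{x}-\bs{y}\vert^{-3}$ term survives) is exactly the delicate point the paper handles via \cref{lem:geometry}.
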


\begin{proof}
  Let $0<\epsilon<1$. We write
  \begin{align*}
      & \; b(\hat{p}_h,\hat{q}_h) - \hat{b}_h(\hat{p}_h,\hat{q}_h)                                                                                                                                                                                                       \\
    = & \; \int_{\Gamma}\hat{p}_h(\bs{x})\hat{q}_h(\bs{x})(1-J^{-1}_h(\bs{x}))d\Gamma(\bs{x})                                                                                                                                                                            \\
      & + \; \frac{1}{4\pi}\int_{\Gamma}\int_{\Gamma}(\hat{p}_h(\bs{x})-\hat{p}_h(\bs{y}))\hat{q}_h(\bs{x})                                                                                                                                                              \\
      & \times \; \left[\frac{(\bs{x}-\bs{y})\cdot \bs{n}(\bs{y})}{\vert\bs{x}-\bs{y}\vert^3} - \frac{(\Psi^{-1}_h(\bs{x})-\Psi^{-1}_h(\bs{y}))\cdot\hat{\bs{n}}_h(\bs{y})}{\vert\Psi^{-1}_h(\bs{x})-\Psi^{-1}_h(\bs{y})\vert^3} J^{-1}_h(\bs{y})J^{-1}_h(\bs{x})\right]
    d\Gamma(\bs{y})d\Gamma(\bs{x}).
  \end{align*}
  The first term is immediately bounded by $ch^{\ell+1}\Vert\hat{p}_h\Vert_{0}\Vert\hat{q}_h\Vert_{0}\leq ch^{\ell+1}\Vert\hat{p}_h\Vert_{\epsilon}\Vert\hat{q}_h\Vert_{0}$. For the second term we utilize the following decomposition,
  \begin{align*}
      & \; \frac{(\bs{x}-\bs{y})\cdot \bs{n}(\bs{y})}{\vert\bs{x}-\bs{y}\vert^3} - \frac{(\Psi^{-1}_h(\bs{x})-\Psi^{-1}_h(\bs{y}))\cdot\hat{\bs{n}}_h(\bs{y})}{\vert\Psi^{-1}_h(\bs{x})-\Psi^{-1}_h(\bs{y})\vert^3} J^{-1}_h(\bs{y})J^{-1}_h(\bs{x}) \\
    = & \; (\bs{x}-\bs{y})\cdot\bs{n}(\bs{y})\left[\frac{1}{\vert\bs{x}-\bs{y}\vert^3} - \frac{1}{\vert\Psi^{-1}_h(\bs{x})-\Psi^{-1}_h(\bs{y})\vert^3}\right]
    + \frac{(\bs{x}-\bs{y}) - (\Psi^{-1}_h(\bs{x})-\Psi^{-1}_h(\bs{y}))}{\vert\Psi^{-1}_h(\bs{x})-\Psi^{-1}_h(\bs{y})\vert^3}\cdot\bs{n}(\bs{y})                                                                                                     \\
      & + \; \frac{\Psi^{-1}_h(\bs{x})-\Psi^{-1}_h(\bs{y})}{\vert\Psi^{-1}_h(\bs{x})-\Psi^{-1}_h(\bs{y})\vert^3}\cdot\left[\bs{n}(\bs{y}) - \hat{\bs{n}}_h(\bs{y})\right]
    + \frac{(\Psi^{-1}_h(\bs{x})-\Psi^{-1}_h(\bs{y}))\cdot\hat{\bs{n}}_h(\bs{y})}{\vert\Psi^{-1}_h(\bs{x})-\Psi^{-1}_h(\bs{y})\vert^3}\left[1 - J^{-1}_h(\bs{y})J^{-1}_h(\bs{x})\right].
  \end{align*}
  All terms are bounded by \( ch^{\ell+1} \vert \bs{x} - \bs{y} \vert^{-2} \), except the third, which is \( ch^{\ell} \vert \bs{x} - \bs{y} \vert^{-2} \). This gives
  \begin{align*}
    ch^{\ell} \int_{\Gamma} \int_{\Gamma} \frac{ \vert \hat{p}_h(\bs{x}) - \hat{p}_h(\bs{y}) \vert \, \vert \hat{q}_h(\bs{x}) \vert }{ \vert \bs{x} - \bs{y} \vert^2 } \, d\Gamma(\bs{y}) \, d\Gamma(\bs{x}).
  \end{align*}
  (See \cref{lem:geometry} for details.) Using the Cauchy--Schwarz inequality, we write
  \begin{align*}
    \int_{\Gamma}\int_{\Gamma}\frac{\vert\hat{p}_h(\bs{x})-\hat{p}_h(\bs{y})\vert\vert\hat{q}_h(\bs{x})\vert}{\vert\bs{x}-\bs{y}\vert^2}d\Gamma(\bs{y})d\Gamma(\bs{x}) \leq & \; \left(\int_{\Gamma}\int_{\Gamma}\frac{\vert\hat{p}_h(\bs{x})-\hat{p}_h(\bs{y})\vert^2}{\vert\bs{x}-\bs{y}\vert^{2+2\epsilon}}d\Gamma(\bs{y})d\Gamma(\bs{x})\right)^{1/2} \\
                                                                                                                                                                            & \; \times \left(\int_{\Gamma}\int_{\Gamma}\frac{\vert\hat{q}_h(\bs{x})\vert^2}{\vert\bs{x}-\bs{y}\vert^{2-2\epsilon}}d\Gamma(\bs{y})d\Gamma(\bs{x})\right)^{1/2}.
  \end{align*}
  The first term defines a Sobolev--Slobodeckij semi-norm in $H^{\epsilon}(\Gamma)$, which can be bounded by the $H^{\epsilon}(\Gamma)$-norm \cite[Rem.~10.5]{lions1972}. We bound the second one by $c_\epsilon\Vert\hat{q}_h\Vert_0$, with $c_\epsilon\to\infty$ as $\epsilon\to0$. Finally, using the interpolated normal \(\hat{\bs{\nu}}_h\) also bounds the third term by \( ch^{\ell+1} \vert \bs{x} - \bs{y} \vert^{-2} \).
\end{proof}

Using an inverse Sobolev inequality, we obtain a consistency estimate in $L^2(\Gamma) \times L^2(\Gamma)$ (with factor $h^{\ell-\epsilon}$) and deduce that $\hat{b}_h$ satisfies the discrete inf-sup conditions uniformly in $L^2(\Gamma) \times L^2(\Gamma)$ for sufficiently small $h$ (see \cref{rem:uniform-infsup}). We can then apply Strang's lemma (\cref{lem:strang-B}).

%%%%%%%%%%%%%%%%%%%%%%%%%%%%%%%%%%%%%%%%%%%%%%%%%%%%%%%%%%%%%%%%%%%%%%%%%%%%%%%%%%%%%%%%%%%%%%%%%%%%%%%%%%%%%%%%%%%%%%%%%%%%%%%%%%%%%%%%%%%%%%%%%%%%%%
\begin{theorem}[Intrinsic norm]\label{thm:intrinsic-norm-laplace-DL}
  Let $p$ and $\hat{p}_h$ denote the solutions to \cref{pb:weak-laplace-DL} and \cref{pb:perturbed-laplace-DL} for sufficiently small $h$. Then
  \begin{align*}
    \Vert p - \hat{p}_h\Vert_{0} \leq c\big[h^{m+1}\Vert f\Vert_{m+1} + h^{\ell}\Vert f\Vert_{1}\big].
  \end{align*}
  Using the interpolated normal improves the geometric error to \(h^{\ell+1}\).
\end{theorem}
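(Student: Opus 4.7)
The plan is to apply Strang's lemma in the coercive-plus-compact setting (\cref{lem:strang-B}) with the test element $v_h = \hat{s}_h p$, where $\hat{s}_h$ denotes the $L^2(\Gamma)$-orthogonal projector onto $\hat{V}_h$. This produces three contributions to bound in the $L^2(\Gamma)$-norm: (i) the right-hand-side consistency $\sup_{\hat{q}_h\in\hat{V}_h} |(f,\hat{q}_h) - (\hat{f}_h J^{-1}_h,\hat{q}_h)|/\Vert\hat{q}_h\Vert_0$, (ii) the best-approximation error $\Vert p - \hat{s}_h p\Vert_0$, and (iii) the bilinear-form consistency $\sup_{\hat{q}_h\in\hat{V}_h} |b(\hat{s}_h p,\hat{q}_h) - \hat{b}_h(\hat{s}_h p,\hat{q}_h)|/\Vert\hat{q}_h\Vert_0$.

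First I dispatch (i) exactly as in the proof of \cref{thm:intrinsic-norm-laplace-SL}: the $L^2(\Gamma_h)$-projection identity \cref{eq:projection} eliminates the leading-order contribution, and \cref{lem:geometry} bounds the Jacobian discrepancy, yielding $ch^{\ell+1}\Vert f\Vert_0\Vert\hat{q}_h\Vert_0 \le ch^{\ell}\Vert f\Vert_1\Vert\hat{q}_h\Vert_0$. Term (ii) follows immediately from \cref{lem:approximation-SL} (applied now in $L^2(\Gamma)$ with the DL approximation spaces), giving $\Vert p - \hat{s}_h p\Vert_0 \leq ch^{m+1}\Vert p\Vert_{m+1} \leq ch^{m+1}\Vert f\Vert_{m+1}$ thanks to the regularity of the continuous solution.

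The main obstacle is (iii), because \cref{lem:consistency-laplace-DL} only produces a bound of the form $c_\epsilon h^{\ell}\Vert\hat{s}_h p\Vert_\epsilon\Vert\hat{q}_h\Vert_0$; one cannot replace the trial-function norm by $\Vert\hat{s}_h p\Vert_0$ without losing a factor of $h$ through an inverse inequality. The remedy is to fix some arbitrary $\epsilon\in(0,1)$ once and for all, and to invoke $H^\epsilon$-stability of the $L^2$-projection onto $\hat{V}_h$, which follows by interpolation from the classical $H^1$-stability of the $L^2$-projection on shape-regular Lagrange finite element spaces. This gives $\Vert\hat{s}_h p\Vert_\epsilon \leq c\Vert p\Vert_\epsilon \leq c\Vert p\Vert_1 \leq c\Vert f\Vert_1$, where $c_\epsilon$ is now absorbed into the generic constant. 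Combining the three contributions yields the stated estimate. Finally, the improvement under the interpolated normal $\hat{\bs{\nu}}_h$ is inherited directly from the corresponding $h^{\ell+1}$ gain in \cref{lem:consistency-laplace-DL}, which propagates through term (iii) unchanged and replaces $h^\ell$ by $h^{\ell+1}$ in the final bound.
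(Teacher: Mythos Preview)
Your proposal is correct and follows essentially the same route as the paper: Strang's lemma (\cref{lem:strang-B}) with $v_h=\hat{s}_hp$, then the same three bounds. The only cosmetic differences are that the paper cites \cref{lem:approximation-DL} (not \cref{lem:approximation-SL}) for the approximation and stability properties, and it invokes the ready-made bound $\Vert\hat{s}_hp\Vert_\epsilon\leq\Vert p\Vert_1$ from that lemma directly rather than arguing via $H^1$-stability and interpolation.
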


\begin{proof}
  We apply Strang's lemma with ``$v_h=\hat{s}_hp$,'' where $\hat{s}_h$ denotes the $L^2(\Gamma)$-orthogonal projector onto $\hat{V}_h$, which yields
  \begin{align*}
    \Vert p - \hat{p}_h\Vert_{0} \leq c\left[\sup_{\hat{q}_h\in \hat{V}_h}\frac{\vert(f,\hat{q}_h) - (\hat{f}_hJ^{-1}_h,\hat{q}_h)\vert}{\Vert\hat{q}_h\Vert_{0}} + \Vert p - \hat{s}_hp\Vert_{0} + \sup_{\hat{q}_h\in\hat{V}_h}\frac{\vert b(\hat{s}_hp,\hat{q}_h) - \hat{b}_h(\hat{s}_hp,\hat{q}_h)\vert}{\Vert\hat{q}_h\Vert_{0}}\right].
  \end{align*}
  We can bound the first term as in the proof of \cref{thm:intrinsic-norm-laplace-SL},
  \begin{align*}
    \vert(f - \hat{f}_hJ^{-1}_h,\hat{q}_h)\vert \leq ch^{\ell+1}\Vert f\Vert_0\Vert\hat{q}_h\Vert_0\leq ch^{\ell+1}\Vert f\Vert_1\Vert\hat{q}_h\Vert_{0}.
  \end{align*}

  The bound on the second term follows from \cref{lem:approximation-DL},
  \begin{align*}
    \Vert p - \hat{s}_hp\Vert_{0} \leq ch^{m+1}\Vert p\Vert_{m+1} \leq ch^{m+1}\Vert f\Vert_{m+1}.
  \end{align*}

  Finally, for the third term, we use \cref{lem:consistency-laplace-DL} with $h^\ell$ (or $h^{\ell+1}$ for the interpolated normal),
  \begin{align*}
    \vert b(\hat{s}_hp,\hat{q}_h) - \hat{b}_h(\hat{s}_hp,\hat{q}_h)\vert \leq c_\epsilon h^{\ell}\Vert\hat{s}_hp\Vert_{\epsilon}\Vert\hat{q}_h\Vert_{0},
  \end{align*}
  for some $0<\epsilon<1$, and \cref{lem:approximation-DL} to bound $\Vert\hat{s}_hp\Vert_{\epsilon}\leq\Vert p\Vert_{1}\leq c\Vert f\Vert_1$.
\end{proof}

%%%%%%%%%%%%%%%%%%%%%%%%%%%%%%%%%%%%%%%%%%%%%%%%%%%%%%%%%%%%%%%%%%%%%%%%%%%%%%%%%%%%%%%%%%%%%%%%%%%%%%%%%%%%%%%%%%%%%%%%%%%%%%%%%%%%%%%%%%%%%%%%%%%%%%
\begin{theorem}[Stronger norm]\label{thm:stronger-norm-laplace-DL}
  Let $p$ and $\hat{p}_h$ denote the solutions to \cref{pb:weak-laplace-DL} and \cref{pb:perturbed-laplace-DL} for sufficiently small $h$. Then
  \begin{align*}
    \Vert p - \hat{p}_h\Vert_{1/2} \leq c\left[h^{m+1/2}\Vert f\Vert_{m+1} + h^{\ell-1/2}\Vert f\Vert_{1}\right].
  \end{align*}
  Using the interpolated normal improves the geometric error to \(h^{\ell+1/2}\).
\end{theorem}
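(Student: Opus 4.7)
The proof of \cref{thm:stronger-norm-laplace-DL} will mirror the single-layer case (\cref{thm:stronger-norm-laplace-SL}): the intrinsic norm (here $L^2(\Gamma)$) is already controlled by \cref{thm:intrinsic-norm-laplace-DL}, and an inverse Sobolev inequality allows us to ``lift'' this estimate to the stronger norm $H^{1/2}(\Gamma)$ at the cost of a factor $h^{-1/2}$. The plan is to start from the triangle inequality
\begin{align*}
  \Vert p - \hat{p}_h\Vert_{1/2} \leq \Vert p - \hat{s}_hp\Vert_{1/2} + \Vert \hat{s}_hp - \hat{p}_h\Vert_{1/2},
\end{align*}
where $\hat{s}_h$ is the $L^2(\Gamma)$-orthogonal projector onto $\hat{V}_h$.

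For the first term, I would invoke the approximation properties of $\hat{s}_h$ in $\hat{V}_h$ at the intermediate order $1/2$ (this follows from \cref{lem:approximation-DL} together with a standard interpolation argument between the $L^2$ and $H^1$ cases), obtaining
\begin{align*}
  \Vert p - \hat{s}_hp\Vert_{1/2} \leq c h^{m+1/2}\Vert p\Vert_{m+1} \leq c h^{m+1/2}\Vert f\Vert_{m+1}.
\end{align*}
For the second term, since $\hat{s}_hp - \hat{p}_h \in \hat{V}_h$, I would apply the inverse Sobolev inequality on $\hat{V}_h$ to get $\Vert \hat{s}_hp - \hat{p}_h\Vert_{1/2} \leq c h^{-1/2}\Vert \hat{s}_hp - \hat{p}_h\Vert_0$, then split once more:
\begin{align*}
  \Vert \hat{s}_hp - \hat{p}_h\Vert_0 \leq \Vert \hat{s}_hp - p\Vert_0 + \Vert p - \hat{p}_h\Vert_0.
\end{align*}
The first piece is bounded by $ch^{m+1}\Vert p\Vert_{m+1}$ via \cref{lem:approximation-DL}, and the second by \cref{thm:intrinsic-norm-laplace-DL}, yielding $c\bigl[h^{m+1}\Vert f\Vert_{m+1} + h^\ell\Vert f\Vert_1\bigr]$ (and $h^{\ell+1}$ when the interpolated normal is used). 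Multiplying by $h^{-1/2}$ gives the desired $c\bigl[h^{m+1/2}\Vert f\Vert_{m+1} + h^{\ell-1/2}\Vert f\Vert_1\bigr]$, and combining with the first term completes the bound, with the improvement to $h^{\ell+1/2}$ inherited automatically from \cref{thm:intrinsic-norm-laplace-DL} when $\hat{\bs{\nu}}_h$ replaces $\hat{\bs{n}}_h$.

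The main technical point, and really the only non-routine step, is justifying the inverse Sobolev inequality $\Vert v_h\Vert_{1/2} \leq c h^{-1/2}\Vert v_h\Vert_0$ on the lifted space $\hat{V}_h$: on the polynomial space $V_h$ over $\Gamma_h$ this is classical, but one must transfer it through the smooth diffeomorphism $\Psi_h^{-1}$, controlling the norm distortion uniformly in $h$ for the shape-regular family $\{\Gamma_h\}_{h>0}$. This is precisely the type of estimate collected in the approximation lemmas for the double-layer setting, so I would rely on \cref{lem:approximation-DL} (analogously to how the single-layer proof invokes \cref{lem:approximation-SL}) rather than re-derive it. No new consistency estimate is required, since all the $h^\ell$ and $h^{\ell+1}$ geometric factors are already built into \cref{thm:intrinsic-norm-laplace-DL}.
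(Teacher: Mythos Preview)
Your proposal is correct and follows essentially the same approach as the paper: triangle inequality with $\hat{s}_h p$, bound $\Vert p - \hat{s}_h p\Vert_{1/2}$ directly via \cref{lem:approximation-DL}, and handle $\Vert \hat{s}_h p - \hat{p}_h\Vert_{1/2}$ by the inverse inequality from \cref{lem:approximation-DL} together with \cref{thm:intrinsic-norm-laplace-DL}. One minor remark: the $H^{1/2}$ approximation estimate $\Vert p - \hat{s}_h p\Vert_{1/2} \leq c h^{m+1/2}\Vert p\Vert_{m+1}$ is stated explicitly in \cref{lem:approximation-DL}, so no separate interpolation argument is needed.
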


\begin{proof}
  We write
  \begin{align*}
    \Vert p - \hat{p}_h\Vert_{1/2} \leq \Vert p - \hat{s}_hp\Vert_{1/2} + \Vert\hat{s}_hp - \hat{p}_h\Vert_{1/2}.
  \end{align*}
  For the first term, from \cref{lem:approximation-DL}, we have that
  \begin{align*}
    \Vert p - \hat{s}_hp\Vert_{1/2} \leq c h^{m+1/2}\Vert p\Vert_{m+1} \leq c h^{m+1/2}\Vert f\Vert_{m+1}.
  \end{align*}

  For the second term, we also utilize \cref{lem:approximation-DL},
  \begin{align*}
    \Vert\hat{s}_hp - \hat{p}_h\Vert_{1/2} \leq ch^{-1/2}\left[\Vert\hat{s}_hp - p\Vert_{0} + \Vert p - \hat{p}_h\Vert_{0}\right] \leq ch^{-1/2}\left[h^{m+1}\Vert p\Vert_{m+1} + \Vert p - \hat{p}_h\Vert_{0}\right],
  \end{align*}
  and we use \cref{thm:intrinsic-norm-laplace-DL} and $\Vert p\Vert_{m+1}\leq c\Vert f\Vert_{m+1}$ to conclude.
\end{proof}

%%%%%%%%%%%%%%%%%%%%%%%%%%%%%%%%%%%%%%%%%%%%%%%%%%%%%%%%%%%%%%%%%%%%%%%%%%%%%%%%%%%%%%%%%%%%%%%%%%%%%%%%%%%%%%%%%%%%%%%%%%%%%%%%%%%%%%%%%%%%%%%%%%%%%%
\begin{theorem}[Weaker norms]\label{thm:weaker-norm-laplace-DL}
  Let $p$ and $\hat{p}_h$ denote the solutions to \cref{pb:weak-laplace-DL} and \cref{pb:perturbed-laplace-DL} for sufficiently small $h$. Then
  \begin{align*}
    \Vert p - \hat{p}_h\Vert_{-m-1} \leq c\left[h^{2m+2}\Vert f\Vert_{m+1} + h^{\ell}\Vert f\Vert_{1}\right].
  \end{align*}
  Using the interpolated normal improves the geometric error to \(h^{\ell+1}\).
\end{theorem}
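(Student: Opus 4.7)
I would follow the same duality (Aubin--Nitsche) pattern used in \cref{thm:weaker-norm-laplace-SL}, adapting it to the non-symmetric bilinear form of the double-layer problem. Write
\begin{align*}
  \Vert p - \hat{p}_h\Vert_{-m-1} = \sup_{g\in H^{m+1}(\Gamma)}\frac{\vert(g,p-\hat{p}_h)\vert}{\Vert g\Vert_{m+1}}.
\end{align*}
Introduce the adjoint dual problem: find $q\in L^2(\Gamma)$ such that $b(r,q)=(g,r)$ for all $r\in L^2(\Gamma)$, i.e., $(I/2-D_0^\ast)q=g$. Since $\Gamma$ is smooth, $(I/2-D_0^\ast)$ is an isomorphism on $H^s(\Gamma)$ for every $s\in\R$, so $q\in H^{m+1}(\Gamma)$ with $\Vert q\Vert_{m+1}\leq c\Vert g\Vert_{m+1}$. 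Then $(g,p-\hat{p}_h) = b(p-\hat{p}_h,q)$, and the task reduces to bounding the latter uniformly in $q$.

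\textbf{Splitting the error.} I would decompose
\begin{align*}
  b(p-\hat{p}_h,q) = b(p-\hat{p}_h,q-\hat{s}_hq) + b(p-\hat{p}_h,\hat{s}_hq),
\end{align*}
where $\hat{s}_h$ is the $L^2(\Gamma)$-projector onto $\hat{V}_h$. The first term is handled by continuity of $b$ on $L^2(\Gamma)\times L^2(\Gamma)$ together with \cref{thm:intrinsic-norm-laplace-DL} and the $L^2$-approximation estimate from \cref{lem:approximation-DL}:
\begin{align*}
  \vert b(p-\hat{p}_h,q-\hat{s}_hq)\vert \leq c\Vert p-\hat{p}_h\Vert_0\Vert q-\hat{s}_hq\Vert_0 \leq c\bigl[h^{2m+2}\Vert f\Vert_{m+1} + h^{\ell+m+1}\Vert f\Vert_1\bigr]\Vert g\Vert_{m+1},
\end{align*}
whose geometric rate $h^{\ell+m+1}$ is already better than the target $h^\ell$. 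For the second term, I would mimic the single-layer proof: using $b(p,\hat{s}_hq)=(f,\hat{s}_hq)$ and $\hat{b}_h(\hat{p}_h,\hat{s}_hq)=(\hat{f}_hJ_h^{-1},\hat{s}_hq)$, write
\begin{align*}
  b(p-\hat{p}_h,\hat{s}_hq) = (f-\hat{f}_hJ_h^{-1},\hat{s}_hq) + \bigl[\hat{b}_h(\hat{p}_h,\hat{s}_hq)-b(\hat{p}_h,\hat{s}_hq)\bigr].
\end{align*}
The right-hand-side term is bounded by $ch^{\ell+1}\Vert f\Vert_1\Vert g\Vert_{m+1}$ exactly as in the proof of \cref{thm:intrinsic-norm-laplace-SL} (via \cref{eq:projection} and \cref{lem:geometry}, noting $\Vert\hat{s}_hq\Vert_0\leq\Vert q\Vert_{m+1}\leq c\Vert g\Vert_{m+1}$).

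\textbf{The delicate term.} The main obstacle is the consistency contribution. \cref{lem:consistency-laplace-DL} gives
\begin{align*}
  \vert\hat{b}_h(\hat{p}_h,\hat{s}_hq)-b(\hat{p}_h,\hat{s}_hq)\vert \leq c_\epsilon h^{\ell}\Vert\hat{p}_h\Vert_{\epsilon}\Vert\hat{s}_hq\Vert_0
\end{align*}
for any fixed $\epsilon\in(0,1)$, with $h^{\ell+1}$ if the interpolated normal $\hat{\bs{\nu}}_h$ is used. The difficulty is that the norm $\Vert\hat{p}_h\Vert_\epsilon$ appearing here is not a priori uniformly bounded. I would fix a small $\epsilon\in(0,1)$ and estimate
\begin{align*}
  \Vert\hat{p}_h\Vert_\epsilon \leq \Vert\hat{s}_hp\Vert_\epsilon + \Vert\hat{p}_h-\hat{s}_hp\Vert_\epsilon \leq c\Vert p\Vert_1 + ch^{-\epsilon}\bigl[\Vert\hat{p}_h-p\Vert_0 + \Vert p-\hat{s}_hp\Vert_0\bigr],
\end{align*}
using the $H^\epsilon$-stability of $\hat{s}_h$ on shape-regular meshes and the inverse inequality on $\hat{V}_h$. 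Invoking \cref{thm:intrinsic-norm-laplace-DL} and \cref{lem:approximation-DL} shows $\Vert\hat{p}_h\Vert_\epsilon\leq c\Vert f\Vert_1$ plus terms that vanish as $h\to 0$. Multiplying by $h^\ell$ (respectively $h^{\ell+1}$) yields a contribution of $ch^\ell\Vert f\Vert_1\Vert g\Vert_{m+1}$ (respectively $ch^{\ell+1}\Vert f\Vert_1\Vert g\Vert_{m+1}$), matching the claimed rates. Collecting all the bounds, taking the supremum over $g\in H^{m+1}(\Gamma)$, and observing that the dominant geometric contribution arises only from this consistency term completes the proof.
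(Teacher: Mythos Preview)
Your proposal is correct and follows the paper's proof almost verbatim: the same Aubin--Nitsche duality, the same identification of the adjoint operator $I/2-D_0^*$, the same splitting $b(p-\hat{p}_h,q)=b(p-\hat{p}_h,q-\hat{s}_hq)+b(p-\hat{p}_h,\hat{s}_hq)$, and the same further decomposition of the second piece into the right-hand-side perturbation and the consistency term. The only difference is how you control $\Vert\hat{p}_h\Vert_\epsilon$ in the consistency term: the paper bounds it by $\Vert p\Vert_{1/2}+\Vert p-\hat{p}_h\Vert_{1/2}$ (taking $\epsilon\leq 1/2$) and invokes \cref{thm:stronger-norm-laplace-DL}, whereas you split $\hat{p}_h=\hat{s}_hp+(\hat{p}_h-\hat{s}_hp)$ and apply an inverse inequality plus \cref{thm:intrinsic-norm-laplace-DL}. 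Since \cref{thm:stronger-norm-laplace-DL} is itself proved by exactly that inverse-inequality-plus-intrinsic-norm argument, you have simply inlined it; the two routes are equivalent.
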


\begin{proof}
  We have
  \begin{align*}
    \Vert p - \hat{p}_h\Vert_{-m-1} = \sup_{g\in H^{m+1}(\Gamma)}\frac{\vert (g,p-\hat{p}_h)\vert}{\Vert g\Vert_{m+1}} = \sup_{g\in H^{m+1}(\Gamma)}\frac{\vert b(p-\hat{p}_h,q)\vert}{\Vert g\Vert_{m+1}},
  \end{align*}
  where $q \in L^2(\Gamma)$ solves the following dual problem,
  \begin{align*}
    b(p,q) = (g,p) \quad \forall p \in L^2(\Gamma).
  \end{align*}
  We note that
  \begin{align*}
    b(p,q) = (p,q)/2 - (D_0p,q) = (q,p)/2 - (D_0^*q,p) := b^*(q,p),
  \end{align*}
  where the dual operator $D_0^*: L^2(\Gamma) \to L^2(\Gamma)$ is defined by
  \begin{align*}
    (D_0^*q)(\bs{x}) = \frac{1}{4\pi}\int_\Gamma \frac{\partial}{\partial \bs{n}(\bs{x})} \left(\frac{1}{\vert \bs{x} - \bs{y} \vert}\right) q(\bs{y}) d\Gamma(\bs{y}).
  \end{align*}
  The dual problem is associated with the Laplace Neumann exterior problem \cite{mclean2000}. In particular, $g\in H^{m+1}(\Gamma)$ implies that $q \in H^{m+1}(\Gamma)$ with $\Vert q \Vert_{m+1} \leq c \Vert g \Vert_{m+1}$, which yields
  \begin{align*}
    \Vert p - \hat{p}_h \Vert_{-m-1} \leq c \sup_{q \in H^{m+1}(\Gamma)} \frac{\vert b(p - \hat{p}_h, q) \vert}{\Vert q \Vert_{m+1}}.
  \end{align*}
  Now, write $b(p - \hat{p}_h, q) = b(p - \hat{p}_h, q - \hat{s}_h q) + b(p - \hat{p}_h, \hat{s}_h q)$. The first term can be bounded as
  \begin{align*}
    \vert b(p - \hat{p}_h, q - \hat{s}_h q) \vert \leq c \Vert p - \hat{p}_h \Vert_{0} \Vert q - \hat{s}_h q \Vert_{0}.
  \end{align*}
  (Recall $\hat{s}_h$ denotes the $L^2(\Gamma)$-projector onto $\hat{V}_h$.) We use \cref{thm:intrinsic-norm-laplace-DL} and \cref{lem:approximation-DL} to get
  \begin{align*}
    \vert b(p - \hat{p}_h, q - \hat{s}_h q) \vert \leq c \left[h^{2m+2} \Vert f \Vert_{m+1} + h^{\ell+m+1} \Vert f \Vert_{1} \right] \Vert q \Vert_{m+1}.
  \end{align*}
  Using the interpolated normal would give a term $h^{\ell+m+2}$ instead of $h^{\ell+m+1}$.

  For the second term, we write
  \begin{align*}
    b(p - \hat{p}_h, \hat{s}_h q) = b(p, \hat{s}_h q) - \hat{b}_h(\hat{p}_h, \hat{s}_h q) + \hat{b}_h(\hat{p}_h, \hat{s}_h q) - b(\hat{p}_h, \hat{s}_h q).
  \end{align*}
  For the first difference, we observe that $b(p, \hat{s}_h q) = (f, \hat{s}_h q)$ and
  $\hat{b}_h(\hat{p}_h, \hat{s}_h q) = (\hat{f}_h J^{-1}_h, \hat{s}_h q)$, hence
  \begin{align*}
    b(p, \hat{s}_h q) - \hat{b}_h(\hat{p}_h, \hat{s}_h q) = (f - \hat{f}_h J^{-1}_h, \hat{s}_h q).
  \end{align*}
  Again, we can bound this term as follows,
  \begin{align*}
    \vert(f - \hat{f}_hJ^{-1}_h,\hat{s}_hq)\vert \leq ch^{\ell+1}\Vert f\Vert_0\Vert\hat{s}_hq\Vert_0\leq ch^{\ell+1}\Vert f\Vert_1\Vert q\Vert_{m+1}.
  \end{align*}
  For the second difference, we use \cref{lem:consistency-laplace-DL} for some $0 < \epsilon \leq 1/2$,
  \begin{align*}
    \vert \hat{b}_h(\hat{p}_h, \hat{s}_h q) - b(\hat{p}_h, \hat{s}_h q) \vert \leq ch^{\ell} \Vert \hat{p}_h \Vert_{\epsilon} \Vert \hat{s}_h q \Vert_{0}.
  \end{align*}
  Since $\Vert \hat{s}_h q \Vert_{0} \leq \Vert q \Vert_{0} \leq \Vert q \Vert_{m+1}$ and
  \begin{align*}
    \Vert \hat{p}_h \Vert_{\epsilon} \leq \Vert p \Vert_{1/2} + \Vert p - \hat{p}_h \Vert_{1/2} \leq c\left[\Vert p\Vert_{1/2} + h^{m+1/2}\Vert f\Vert_{m+1} + h^{\ell-1/2}\Vert f\Vert_{1}\right],
  \end{align*}
  according to \cref{thm:stronger-norm-laplace-DL}, we get a term
  \begin{align*}
    c\left[h^{\ell}\Vert f\Vert_{1} + h^{\ell+m+1/2}\Vert f\Vert_{m+1} + h^{2\ell-1/2}\Vert f\Vert_{1}\right]\Vert q\Vert_{m+1}.
  \end{align*}
  A similar reasoning with the interpolated normal would give a term
  \begin{align*}
    c\left[h^{\ell+1}\Vert f\Vert_{1} + h^{\ell+m+3/2}\Vert f\Vert_{m+1} + h^{2\ell+3/2}\Vert f\Vert_{1}\right]\Vert q\Vert_{m+1}.
  \end{align*}
\end{proof}

%%%%%%%%%%%%%%%%%%%%%%%%%%%%%%%%%%%%%%%%%%%%%%%%%%%%%%%%%%%%%%%%%%%%%%%%%%%%%%%%%%%%%%%%%%%%%%%%%%%%%%%%%%%%%%%%%%%%%%%%%%%%%%%%%%%%%%%%%%%%%%%%%%%%%%
\begin{theorem}[Pointwise evaluation]\label{thm:ptwise-eval-laplace-DL}
  Let $p$ and $\hat{p}_h$ denote the solutions to \cref{pb:weak-laplace-DL} and \cref{pb:perturbed-laplace-DL} for sufficiently small $h$. Then for all $\bs{x}\in\Omega$
  \begin{align*}
    \vert u(\bs{x}) - u_h(\bs{x})\vert \leq c_{\bs{x}} \left[h^{2m+2}\Vert f\Vert_{m+1} + h^{\ell}\Vert f\Vert_{1}\right],
  \end{align*}
  with $c_{\bs{x}}\to\infty$ as $\bs{x}\to\Gamma$. Using the interpolated normal improves the geometric error to \(h^{\ell+1}\).
\end{theorem}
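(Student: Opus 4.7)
The plan is to mirror the proof of \cref{thm:ptwise-eval-laplace-SL}, adapting the kernel splitting to the double-layer setting. I would begin by pulling the defining integral of $u_h$ from $\Gamma_h$ back to $\Gamma$ via the projection map, yielding
\begin{align*}
u(\bs{x}) - u_h(\bs{x}) = -\frac{1}{4\pi}\int_\Gamma \left[\frac{(\bs{x}-\bs{y})\cdot\bs{n}(\bs{y})}{\vert\bs{x}-\bs{y}\vert^3}\,p(\bs{y}) - \frac{(\bs{x}-\Psi^{-1}_h(\bs{y}))\cdot\hat{\bs{n}}_h(\bs{y})}{\vert\bs{x}-\Psi^{-1}_h(\bs{y})\vert^3}\,\hat{p}_h(\bs{y})\,J^{-1}_h(\bs{y})\right]d\Gamma(\bs{y}),
\end{align*}
and then adding and subtracting $\frac{(\bs{x}-\bs{y})\cdot\bs{n}(\bs{y})}{\vert\bs{x}-\bs{y}\vert^3}\hat{p}_h(\bs{y})$ to split this into a \emph{density error} with $p-\hat{p}_h$ against the exact kernel, and a \emph{geometric error} with density $\hat{p}_h$ against the difference of kernels.

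For the density error, the crucial observation is that $\bs{x}\in\Omega$ is fixed at positive distance from $\Gamma$, so the kernel $K(\bs{x},\cdot)=(\bs{x}-\cdot)\cdot\bs{n}(\cdot)/\vert\bs{x}-\cdot\vert^3$ is smooth on $\Gamma$ and satisfies $\Vert K(\bs{x},\cdot)\Vert_{m+1}\leq c_{\bs{x}}$, with $c_{\bs{x}}\to\infty$ as $\bs{x}\to\Gamma$. The duality pairing then gives a bound
\begin{align*}
c\Vert p-\hat{p}_h\Vert_{-m-1}\Vert K(\bs{x},\cdot)\Vert_{m+1}\leq c_{\bs{x}}\Vert p-\hat{p}_h\Vert_{-m-1},
\end{align*}
which is of the claimed order by \cref{thm:weaker-norm-laplace-DL} (and improves by a factor $h$ in the geometric term when the interpolated normal is used).

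For the geometric error, I would reuse the four-term decomposition of the kernel difference already written in the proof of \cref{lem:consistency-laplace-DL}, namely the reciprocal-cube difference, the difference $(\bs{x}-\bs{y})-(\bs{x}-\Psi^{-1}_h(\bs{y}))$, the normal difference $\bs{n}-\hat{\bs{n}}_h$, and the Jacobian factor $1-J^{-1}_h$. The key simplification compared to \cref{lem:consistency-laplace-DL} is that $\bs{x}$ is now a fixed off-surface evaluation point rather than an integration variable, so every negative power $\vert\bs{x}-\bs{y}\vert^{-k}$ is uniformly controlled by $c_{\bs{x}}$ and no singular integration in $\bs{x}$ need be performed. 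By \cref{lem:geometry}, three of the four terms contribute $c_{\bs{x}} h^{\ell+1}$, while the normal term yields only $c_{\bs{x}} h^{\ell}$ with the element normal and $c_{\bs{x}} h^{\ell+1}$ with the interpolated normal. Multiplying by $\Vert\hat{p}_h\Vert_0$ and bounding the latter via
\begin{align*}
\Vert\hat{p}_h\Vert_0 \leq \Vert p\Vert_0 + \Vert p-\hat{p}_h\Vert_0 \leq c\bigl[\Vert p\Vert_0 + h^{m+1}\Vert f\Vert_{m+1} + h^{\ell}\Vert f\Vert_1\bigr]
\end{align*}
thanks to \cref{thm:intrinsic-norm-laplace-DL} closes the estimate.

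The only delicate point is the treatment of the normal term: with the element normal one only has $\vert\bs{n}-\hat{\bs{n}}_h\vert\lesssim h^{\ell}$, whereas the interpolated normal enjoys the super-approximation $\vert\bs{n}-\hat{\bs{\nu}}_h\vert\lesssim h^{\ell+1}$ recorded in \cref{lem:geometry}, and this is precisely the source of the $h^{\ell+1}$ improvement stated at the end of the theorem. Everything else is either a direct application of \cref{thm:weaker-norm-laplace-DL,thm:intrinsic-norm-laplace-DL} or a routine off-surface bound on a smooth kernel.
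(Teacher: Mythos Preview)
Your proposal is correct and matches the paper's proof essentially line for line: the same density/geometry splitting, the same four-term decomposition of the kernel difference (reciprocal-cube, position, normal, Jacobian), the same appeal to \cref{thm:weaker-norm-laplace-DL} for the density error and to \cref{thm:intrinsic-norm-laplace-DL} for bounding $\Vert\hat{p}_h\Vert_0$, and the same identification of the normal term as the sole $h^{\ell}$ contribution that improves under interpolation.
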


\begin{proof}
  Let $\bs{x}\in\Omega$. We write
  \begin{align*}
    u(\bs{x}) - u_h(\bs{x}) = -\frac{1}{4\pi}\left[\int_\Gamma\frac{(\bs{x}-\bs{y})\cdot\bs{n}(\bs{y})}{\vert\bs{x}-\bs{y}\vert^3}p(\bs{y})-\frac{(\bs{x}-\Psi^{-1}_h(\bs{y}))\cdot\hat{\bs{n}}_h(\bs{y})}{\vert\bs{x}-\Psi^{-1}_h(\bs{y})\vert^3}\hat{p}_h(\bs{y})J^{-1}_h(\bs{y}) \right]d\Gamma(\bs{y}).
  \end{align*}
  We split the difference as follows,
  \begin{align*}
      & u(\bs{x}) - u_h(\bs{x})                                                                                                                                                                                                         \\
    = & -\frac{1}{4\pi}\int_\Gamma\frac{(\bs{x}-\bs{y})\cdot\bs{n}(\bs{y})}{\vert\bs{x}-\bs{y}\vert^3}\left[p(\bs{y}) - \hat{p}_h(\bs{y})\right]d\Gamma(\bs{y})                                                                         \\
      & -\frac{1}{4\pi}\int_\Gamma\left[\frac{(\bs{x}-\bs{y})\cdot\bs{n}(\bs{y})}{\vert\bs{x}-\bs{y}\vert^3} - \frac{(\bs{x}-\bs{y})\cdot\bs{n}(\bs{y})}{\vert\bs{x}-\Psi^{-1}_h(\bs{y})\vert^3}\right]\hat{p}_h(\bs{y})d\Gamma(\bs{y}) \\
      & -\frac{1}{4\pi}\int_\Gamma\frac{(\bs{x}-\bs{y}) - (\bs{x}-\Psi^{-1}_h(\bs{y}))}{\vert\bs{x}-\Psi^{-1}_h(\bs{y})\vert^3}\cdot\bs{n}(\bs{y})\hat{p}_h(\bs{y})d\Gamma(\bs{y})                                                      \\
      & -\frac{1}{4\pi}\int_\Gamma\frac{\bs{x}-\Psi^{-1}_h(\bs{y})}{\vert\bs{x}-\Psi^{-1}_h(\bs{y})\vert^3}\cdot\left[\bs{n}(\bs{y}) - \hat{\bs{n}}_h(\bs{y})\right]\hat{p}_h(\bs{y})d\Gamma(\bs{y})                                    \\
      & -\frac{1}{4\pi}\int_\Gamma\frac{(\bs{x}-\Psi^{-1}_h(\bs{y}))\cdot\hat{\bs{n}}_h(\bs{y})}{\vert\bs{x}-\Psi^{-1}_h(\bs{y})\vert^3}\left[1 - J^{-1}_h(\bs{y})\right]\hat{p}_h(\bs{y})d\Gamma(\bs{y}).
  \end{align*}
  We bound the first term by $c_{\bs{x}}\Vert p-\hat{p}_h\Vert_{-m-1}$, and the others by $c_{\bs{x}}h^{\ell}\Vert\hat{p}_h\Vert_0$ with
  \begin{align*}
    \Vert\hat{p}_h\Vert_{0} \leq \Vert p\Vert_{0}  + \Vert p - \hat{p}_h\Vert_{0} \leq c\left[\Vert p\Vert_{0} + h^{m+1}\Vert f\Vert_{m+1} + h^{\ell}\Vert f\Vert_{1}\right],
  \end{align*}
  and $c_{\bs{x}}\to\infty$ as $\bs{x}\to\Gamma$. With the interpolated normal, the \( h^\ell \) terms improve to \( h^{\ell+1} \).
\end{proof}

We conclude this section by stepping back to consider \cref{tab:results}. The convergence rate in \cite{sauter2011} stems from \cite[Cor.~8.2.9]{sauter2011}, where integrals with a \(1/r^2\) singularity are estimated directly, yielding a \(\log h\) term. In our case, such singularities are always multiplied by terms like \(\hat{p}_h(\bs{x}) - \hat{p}_h(\bs{y})\), which regularize the integrand. After all, the double-layer potential on a smooth surface is only weakly singular, and this should be reflected in the analysis.

%%%%%%%%%%%%%%%%%%%%%%%%%%%%%%%%%%%%%%%%%%%%%%%%%%%%%%%%%%%%%%%%%%%%%%%%%%%%%%%%%%%%%%%%%%%%%%%%%%%%%%%%%%%%%%%%%%%%%%%%%%%%%%%%%%%%%%%%%%%%%%%%%%%%%%%
\section{Convergence rates for the Helmholtz equation}\label{sec:helmholtz}

%%%%%%%%%%%%%%%%%%%%%%%%%%%%%%%%%%%%%%%%%%%%%%%%%%%%%%%%%%%%%%%%%%%%%%%%%%%%%%%%%%%%%%%%%%%%%%%%%%%%%%%%%%%%%%%%%%%%%%%%%%%%%%%%%%%%%%%%%%%%%%%%%%%%%%%
\subsection{Single-layer potential}

We consider the following weak formulation of \cref{pb:helmholtz-SL}. We assume that $k^2$ is not a Dirichlet eigenvalue of $-\Delta$ in $\Omega$.

%%%%%%%%%%%%%%%%%%%%%%%%%%%%%%%%%%%%%%%%%%%%%%%%%%%%%%%%%%%%%%%%%%%%%%%%%%%%%%%%%%%%%%%%%%%%%%%%%%%%%%%%%%%%%%%%%%%%%%%%%%%%%%%%%%%%%%%%%%%%%%%%%%%%%%%
\begin{problem}[Weak formulation]\label{pb:weak-helmholtz-SL}
Find $p\in H^{-1/2}(\Gamma)$ such that
\begin{align*}
  b(p,q) = \langle f, q\rangle \quad \forall q\in H^{-1/2}(\Gamma),
\end{align*}
with $b:H^{-1/2}(\Gamma)\times H^{-1/2}(\Gamma)\to\C$ defined by
\begin{align*}
  b(p,q) = \langle Sp,q\rangle = \frac{1}{4\pi}\int_{\Gamma}\int_{\Gamma}\frac{e^{ik \lvert \bs{x}-\bs{y}\rvert}}{\vert\bs{x}-\bs{y}\vert}p(\bs{y})\overline{q(\bs{x})}d\Gamma(\bs{y})d\Gamma(\bs{x}).
\end{align*}
We assume \( f \in H^{m+2}(\Gamma) \), so that \( p \in H^{m+1}(\Gamma) \), where \( m \geq 0 \) is the polynomial degree in \( V_h \).
\end{problem}

In \cref{pb:weak-helmholtz-SL}, the expression $\langle f, q \rangle$ denotes the duality pairing between $f \in H^{1/2}(\Gamma)$ and $q \in H^{-1/2}(\Gamma)$. When $q\in L^2(\Gamma)$, this pairing is given by the complex $L^2(\Gamma)$-inner product
\[
  \langle f, q \rangle = (f, q) = \int_\Gamma f(\bs{x})\overline{p(\bs{x})}d\Gamma(\bs{x}) \quad \forall f \in H^{1/2}(\Gamma), \ \forall q \in L^2(\Gamma).
\]
By density, this definition extends uniquely and continuously to all $q \in H^{-1/2}(\Gamma)$.

Since $b$ is injective and can be rewritten as \(b(p,q)=a(p,q)+t(p,q)\) with
\begin{align*}
  a(p,q) & = \frac{1}{4 \pi}\int_{\Gamma} \int_{\Gamma}\frac{p(\bs{y})\overline{q(\bs{x})}}{\lvert\bs{x}-\bs{y}\rvert}d\Gamma(\bs{y})d\Gamma(\bs{x}) \quad \text{(coercive)},                             \\
  t(p,q) & = \frac{1}{4 \pi}\int_{\Gamma} \int_{\Gamma}\frac{e^{ik\lvert\bs{x}-\bs{y}\rvert}-1}{\lvert \bs{x}-\bs{y}\rvert}p(\bs{y})\overline{q(\bs{x})}d\Gamma(\bs{y})d\Gamma(\bs{x}) \quad \text{(compact)},
\end{align*}
there is a unique solution to \cref{pb:weak-helmholtz-SL} by Fredholm's alternative (\cref{thm:fredholm}). We note that the operator associated with \(t\) maps \(H^s(\Gamma)\) to \(H^{s+3}(\Gamma)\), and is thus compact from \(H^s(\Gamma)\) to \(H^{s+1}(\Gamma)\) for any $s\in\R$. We use the same space \(\hat{V}_h\) as in \cref{pb:perturbed-laplace-SL} (the Laplace single-layer case).

%%%%%%%%%%%%%%%%%%%%%%%%%%%%%%%%%%%%%%%%%%%%%%%%%%%%%%%%%%%%%%%%%%%%%%%%%%%%%%%%%%%%%%%%%%%%%%%%%%%%%%%%%%%%%%%%%%%%%%%%%%%%%%%%%%%%%%%%%%%%%%%%%%%%%%
\begin{problem}[Perturbed Galerkin approximation problem]\label{pb:perturbed-helmholtz-SL}
Find $\hat{p}_h\in\hat{V}_h$ such that
\begin{align*}
  \hat{b}_h(\hat{p}_h,\hat{q}_h) = (\hat{f}_hJ^{-1}_h,\hat{q}_h) \quad \forall \hat{q}_h\in \hat{V}_h,
\end{align*}
with $\hat{b}_h:H^{-1/2}(\Gamma)\times H^{-1/2}(\Gamma)\to\C$ defined by
\begin{align*}
  \hat{b}_h(\hat{p},\hat{q}) = \frac{1}{4\pi}\int_{\Gamma}\int_{\Gamma}\frac{e^{ik\lvert\Psi^{-1}_h(\bs{x})-\Psi^{-1}_h(\bs{y})\rvert}J^{-1}_h(\bs{y})J^{-1}_h(\bs{x})}{\vert\Psi^{-1}_h(\bs{x})-\Psi^{-1}_h(\bs{y})\vert}\hat{p}(\bs{y})\overline{\hat{q}(\bs{x})}d\Gamma(\bs{y})d\Gamma(\bs{x}).
\end{align*}
\end{problem}

%%%%%%%%%%%%%%%%%%%%%%%%%%%%%%%%%%%%%%%%%%%%%%%%%%%%%%%%%%%%%%%%%%%%%%%%%%%%%%%%%%%%%%%%%%%%%%%%%%%%%%%%%%%%%%%%%%%%%%%%%%%%%%%%%%%%%%%%%%%%%%%%%%%%%%%
\begin{lemma}[Consistency]\label{lem:consistency-helmholtz-SL}
  There exists \(h_0>0\) such that for all \(h\leq h_0\), the sesquilinear forms defined in \cref{pb:weak-helmholtz-SL} and \cref{pb:perturbed-helmholtz-SL} satisfy the consistency conditions
  \begin{align*}
    \vert b(\hat{p}_h,\hat{q}_h) - \hat{b}_h(\hat{p}_h,\hat{q}_h)\vert \leq c h^{\ell+1} \Vert\hat{p}_h\Vert_{0}\Vert\hat{q}_h\Vert_{0} \quad \forall\hat{p}_h,\hat{q}_h\in\hat{V}_h.
  \end{align*}
\end{lemma}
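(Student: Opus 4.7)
The plan is to mirror the proof of \cref{lem:consistency-laplace-SL}, adjusting only for the presence of the exponential factor $e^{ikr}$ in the Helmholtz kernel. Writing $r_1 = \vert\bs{x}-\bs{y}\vert$ and $r_2 = \vert\Psi^{-1}_h(\bs{x})-\Psi^{-1}_h(\bs{y})\vert$, I would split the difference $b(\hat{p}_h,\hat{q}_h) - \hat{b}_h(\hat{p}_h,\hat{q}_h)$ into three contributions, corresponding to (i) the change in the singular factor $1/r$, (ii) the change in the exponential $e^{ikr}$, and (iii) the change in the surface measure encoded by the Jacobians $J^{-1}_h$.

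For contributions (i) and (iii) the treatment is identical to the Laplace case: \cref{lem:geometry} provides the pointwise bounds $\vert 1/r_1 - 1/r_2\vert \leq ch^{\ell+1}/r_1$ and $\vert 1 - J^{-1}_h(\bs{y})J^{-1}_h(\bs{x})\vert \leq ch^{\ell+1}$, and the resulting weakly singular kernel is controlled in $L^2(\Gamma)\times L^2(\Gamma)$ by the continuity of the Laplace single-layer operator $S_0$. For (ii), I would use the mean value inequality $\vert e^{ikr_1} - e^{ikr_2}\vert \leq k\vert r_1-r_2\vert$ combined with the geometric estimate $\vert r_1 - r_2\vert \leq c h^{\ell+1}\,r_1$ (a direct consequence of \cref{lem:geometry}) to obtain the non-singular bound $\vert e^{ikr_1} - e^{ikr_2}\vert/r_1 \leq ckh^{\ell+1}$.

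More concretely, the decomposition
\begin{align*}
\frac{e^{ikr_1}}{r_1} - \frac{e^{ikr_2}}{r_2} = e^{ikr_1}\left[\frac{1}{r_1} - \frac{1}{r_2}\right] + \frac{e^{ikr_1} - e^{ikr_2}}{r_2},
\end{align*}
together with the Jacobian splitting used in the Laplace proof, yields
\begin{align*}
\vert b(\hat{p}_h,\hat{q}_h) - \hat{b}_h(\hat{p}_h,\hat{q}_h)\vert \leq ch^{\ell+1}\int_\Gamma\int_\Gamma \frac{\vert\hat{p}_h(\bs{y})\vert\,\vert\hat{q}_h(\bs{x})\vert}{\vert\bs{x}-\bs{y}\vert}\,d\Gamma(\bs{y})\,d\Gamma(\bs{x}) + c h^{\ell+1}\Vert\hat{p}_h\Vert_0\Vert\hat{q}_h\Vert_0,
\end{align*}
and applying the $L^2(\Gamma)$-continuity of $S_0$ to $\vert\hat{p}_h\vert$ and $\vert\hat{q}_h\vert$ (as at the end of the Laplace proof) closes the estimate.

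The only real obstacle is verifying that the constants absorbing the exponential and the factor $k$ are uniform in $h$; since $\vert e^{ikr}\vert = 1$ and $k$ is fixed, this is immediate, so the proof reduces to the Laplace argument plus a short bookkeeping step for the exponential. I do not expect any new Sobolev-space subtleties here, in contrast to the double-layer case where the kernel is more singular.
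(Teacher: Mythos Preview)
Your proof is correct and follows essentially the same route as the paper: split off the Jacobian factor, bound the kernel difference pointwise by $ch^{\ell+1}/\vert\bs{x}-\bs{y}\vert$, and close via the $L^2(\Gamma)\times L^2(\Gamma)$-continuity of the Laplace single-layer form. The only cosmetic difference is that the paper invokes the ready-made bound $\bigl\vert e^{ikr_1}/r_1 - e^{ikr_2}/r_2\bigr\vert \leq ch^{\ell+1}/r_1$ directly from \cref{lem:geometry}, whereas you re-derive it inline via your three-way split; both lead to the same integrand estimate.
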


\begin{proof}
  We write
  \begin{align*}
      & \; b(\hat{p}_h,\hat{q}_h) - \hat{b}_h(\hat{p}_h,\hat{q}_h)                                                                                                                                                                                                                                                           \\
    = & \; \frac{1}{4\pi}\int_{\Gamma}\int_{\Gamma}\hat{p}_h(\bs{y})\overline{\hat{q}_h(\bs{x})}\left[\frac{e^{ik\vert\bs{x}-\bs{y}\vert}}{\vert\bs{x}-\bs{y}\vert} - \frac{e^{ik\vert\Psi^{-1}_h(\bs{x})-\Psi^{-1}_h(\bs{y})\vert}}{\vert\Psi^{-1}_h(\bs{x})-\Psi^{-1}_h(\bs{y})\vert}\right]d\Gamma(\bs{y})d\Gamma(\bs{x}) \\
      & + \; \frac{1}{4\pi}\int_{\Gamma}\int_{\Gamma}\frac{\hat{p}_h(\bs{y})\overline{\hat{q}_h(\bs{x})}e^{ik\vert\Psi^{-1}_h(\bs{x})-\Psi^{-1}_h(\bs{y})\vert}}{\vert\Psi^{-1}_h(\bs{x})-\Psi^{-1}_h(\bs{y})\vert}\left[1 - J^{-1}_h(\bs{y})J^{-1}_h(\bs{x})\right]d\Gamma(\bs{y})d\Gamma(\bs{x}).
  \end{align*}
  This is again straightforward using the geometric estimates in \cref{lem:geometry},
  \begin{align*}
    \vert b(\hat{p}_h,\hat{q}_h) - \hat{b}_h(\hat{p}_h,\hat{q}_h)\vert \leq ch^{\ell+1}\frac{1}{4\pi}\int_{\Gamma}\int_{\Gamma}\frac{\vert\hat{p}_h(\bs{y})\vert\vert\hat{q}_h(\bs{x})\vert}{\vert\bs{x}-\bs{y}\vert}d\Gamma(\bs{y})d\Gamma(\bs{x}),
  \end{align*}
  yielding the result via the continuity of the bilinear form associated with \(S_0\) in $L^2(\Gamma)\times L^2(\Gamma)$.
\end{proof}

Again, using inverse inequalities, we deduce that the $\hat{b}_h$ satisfies the discrete inf-sup conditions uniformly in $H^{-1/2}(\Gamma)\times H^{-1/2}(\Gamma)$ for sufficiently small $h$ via \cref{rem:uniform-infsup}. We can apply Strang's lemma to obtain well-posedness and the following estimate.

%%%%%%%%%%%%%%%%%%%%%%%%%%%%%%%%%%%%%%%%%%%%%%%%%%%%%%%%%%%%%%%%%%%%%%%%%%%%%%%%%%%%%%%%%%%%%%%%%%%%%%%%%%%%%%%%%%%%%%%%%%%%%%%%%%%%%%%%%%%%%%%%%%%%%%%
\begin{theorem}[Intrinsic norm]\label{thm:intrinsic-norm-helmholtz-SL}
  Let $p$ and $\hat{p}_h$ denote the solutions to \cref{pb:weak-helmholtz-SL} and \cref{pb:perturbed-helmholtz-SL} for sufficiently small $h$. Then
  \begin{align*}
    \Vert p - \hat{p}_h\Vert_{-1/2} \leq c\left[h^{m+3/2}\Vert f\Vert_{m+2} + h^{\ell+1/2}\Vert f\Vert_{1}\right].
  \end{align*}
\end{theorem}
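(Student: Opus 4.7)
The proof mirrors that of the Laplace single-layer case (\cref{thm:intrinsic-norm-laplace-SL}) almost verbatim; the only structural change is that the sesquilinear form is not coercive but only ``coercive plus compact,'' so I would invoke Strang's lemma in that setting (\cref{lem:strang-B}) in place of its coercive counterpart. To justify its application, I first upgrade \cref{lem:consistency-helmholtz-SL} from the $L^2 \times L^2$ estimate with rate $h^{\ell+1}$ to an $H^{-1/2}(\Gamma) \times H^{-1/2}(\Gamma)$ estimate with rate $h^{\ell}$ using the inverse Sobolev inequalities of \cref{lem:approximation-SL}. Combined with the continuous inf-sup condition for $b$ (which holds since $S$ is an isomorphism from $H^{-1/2}(\Gamma)$ to $H^{1/2}(\Gamma)$ away from Dirichlet eigenvalues) and with \cref{rem:uniform-infsup}, this yields the uniform discrete inf-sup conditions required by \cref{lem:strang-B} for all sufficiently small $h$.

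With this in hand, I would choose $v_h = \hat{s}_h p$, where $\hat{s}_h$ denotes the $L^2(\Gamma)$-orthogonal projector onto $\hat{V}_h$. Strang's lemma then produces three contributions to bound, one for each of the right-hand side perturbation, the best approximation error, and the form consistency error, exactly as in \cref{thm:intrinsic-norm-laplace-SL}.

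The three terms are treated identically to the Laplace case. For the right-hand side, I write $(f - \hat{f}_h J_h^{-1}, \hat{q}_h) = (f - f J_h^{-1}, \hat{q}_h)$ using the $L^2(\Gamma_h)$-projection property \cref{eq:projection}, and bound this by $c h^{\ell+1} \Vert f \Vert_0 \Vert \hat{q}_h \Vert_0 \leq c h^{\ell+1/2} \Vert f \Vert_1 \Vert \hat{q}_h \Vert_{-1/2}$ via \cref{lem:geometry} and an inverse Sobolev inequality (\cref{lem:approximation-SL}). For the best approximation term, \cref{lem:approximation-SL} gives $\Vert p - \hat{s}_h p \Vert_{-1/2} \leq c h^{m+3/2} \Vert p \Vert_{m+1} \leq c h^{m+3/2} \Vert f \Vert_{m+2}$, using that $S$ maps $H^{m+1}(\Gamma)$ isomorphically onto $H^{m+2}(\Gamma)$. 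For the form consistency, \cref{lem:consistency-helmholtz-SL} together with an inverse Sobolev inequality yields $\vert b(\hat{s}_h p, \hat{q}_h) - \hat{b}_h(\hat{s}_h p, \hat{q}_h) \vert \leq c h^{\ell+1/2} \Vert \hat{s}_h p \Vert_0 \Vert \hat{q}_h \Vert_{-1/2}$, after which I bound $\Vert \hat{s}_h p \Vert_0 \leq \Vert p \Vert_0 \leq c \Vert f \Vert_1$.

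The only real subtlety, which I expect to be the main (and minor) obstacle, is verifying uniform well-posedness of the perturbed discrete problem: one has to confirm that the compact perturbation of $S_0$ by $S - S_0$ is still controlled uniformly in $h$, so that Strang's lemma B applies with a constant independent of $h$. This is exactly what the $h^\ell$-consistency estimate in $H^{-1/2}$ provides together with \cref{rem:uniform-infsup}; once that is in place, the remaining arithmetic is identical to the Laplace single-layer case.
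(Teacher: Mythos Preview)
Your proposal is correct and follows essentially the same route as the paper: the paper's own proof is a one-line pointer back to \cref{thm:intrinsic-norm-laplace-SL}, invoking \cref{lem:consistency-helmholtz-SL}, \cref{lem:geometry}, and \cref{lem:approximation-SL}, with the uniform discrete inf-sup conditions (via \cref{rem:uniform-infsup}) and \cref{lem:strang-B} replacing coercivity and \cref{lem:strang-A}. The only slight imprecision is that \cref{rem:uniform-infsup} requires the \emph{discrete} inf-sup conditions for $b$ (not merely the continuous one), but these follow from the ``coercive plus compact'' structure \cref{eq:continuity-B}--\cref{eq:injectivity} as recalled before \cref{lem:babuska}.
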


\begin{proof}
  As in \cref{thm:intrinsic-norm-laplace-SL}, the proof relies on \cref{lem:consistency-helmholtz-SL}, \cref{lem:geometry}, and \cref{lem:approximation-SL}.
\end{proof}

%%%%%%%%%%%%%%%%%%%%%%%%%%%%%%%%%%%%%%%%%%%%%%%%%%%%%%%%%%%%%%%%%%%%%%%%%%%%%%%%%%%%%%%%%%%%%%%%%%%%%%%%%%%%%%%%%%%%%%%%%%%%%%%%%%%%%%%%%%%%%%%%%%%%%%%
\begin{theorem}[Stronger norm]\label{thm:stronger-norm-helmholtz-SL}
  Let $p$ and $\hat{p}_h$ denote the solutions to \cref{pb:weak-helmholtz-SL} and \cref{pb:perturbed-helmholtz-SL} for sufficiently small $h$. Then
  \begin{align*}
    \Vert p - \hat{p}_h\Vert_{0} \leq c\left[h^{m+1}\Vert f\Vert_{m+2} + h^{\ell}\Vert f\Vert_{1}\right].
  \end{align*}
\end{theorem}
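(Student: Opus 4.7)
The plan is to mirror the argument used in \cref{thm:stronger-norm-laplace-SL} for the Laplace single-layer case, since the two settings share the same approximation space $\hat V_h$, the same $L^2(\Gamma)$-projector $\hat s_h$, and analogous regularity bounds for the solution. The only substantive change is that the intrinsic-norm estimate we feed into the argument is now \cref{thm:intrinsic-norm-helmholtz-SL} rather than its Laplace counterpart.

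Concretely, I would start from the triangle inequality
\begin{align*}
  \Vert p - \hat{p}_h\Vert_{0} \leq \Vert p - \hat{s}_hp\Vert_{0} + \Vert\hat{s}_hp - \hat{p}_h\Vert_{0}.
\end{align*}
The first summand is controlled by the standard approximation estimate in \cref{lem:approximation-SL}, giving $\Vert p - \hat{s}_hp\Vert_{0} \leq c h^{m+1}\Vert p\Vert_{m+1}$. For the second summand, since $\hat{s}_hp - \hat{p}_h \in \hat V_h$, an inverse Sobolev inequality (again \cref{lem:approximation-SL}) yields
\begin{align*}
  \Vert\hat{s}_hp - \hat{p}_h\Vert_{0} \leq ch^{-1/2}\left[\Vert\hat{s}_hp - p\Vert_{-1/2} + \Vert p - \hat{p}_h\Vert_{-1/2}\right].
\end{align*}
The term $\Vert\hat{s}_hp - p\Vert_{-1/2}$ is bounded by $ch^{m+3/2}\Vert p\Vert_{m+1}$ via \cref{lem:approximation-SL}, and $\Vert p - \hat{p}_h\Vert_{-1/2}$ is controlled by \cref{thm:intrinsic-norm-helmholtz-SL}. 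Combining these and using the regularity bound $\Vert p\Vert_{m+1}\leq c\Vert f\Vert_{m+2}$ gives the claimed estimate.

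There is no real obstacle here: the Helmholtz version of the intrinsic-norm bound has exactly the same form as in the Laplace case, and all the auxiliary tools (the approximation lemma, the inverse inequality, and the regularity of $p$ in terms of $f$) are identical. The proof is therefore essentially a quotation of the Laplace argument with \cref{thm:intrinsic-norm-helmholtz-SL} substituted in.
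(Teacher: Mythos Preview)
Your proposal is correct and follows exactly the approach the paper takes: the paper's proof simply states that the result follows from \cref{thm:intrinsic-norm-helmholtz-SL} and \cref{lem:approximation-SL}, as in the proof of \cref{thm:stronger-norm-laplace-SL}, which is precisely the argument you have spelled out.
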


\begin{proof}
  The result follows from \cref{thm:intrinsic-norm-helmholtz-SL} and \cref{lem:approximation-SL}, as in the proof of \cref{thm:stronger-norm-laplace-SL}.
\end{proof}

%%%%%%%%%%%%%%%%%%%%%%%%%%%%%%%%%%%%%%%%%%%%%%%%%%%%%%%%%%%%%%%%%%%%%%%%%%%%%%%%%%%%%%%%%%%%%%%%%%%%%%%%%%%%%%%%%%%%%%%%%%%%%%%%%%%%%%%%%%%%%%%%%%%%%%%
\begin{theorem}[Weaker norms]\label{thm:weaker-norm-helmholtz-SL}
  Let $p$ and $\hat{p}_h$ denote the solutions to \cref{pb:weak-helmholtz-DL} and \cref{pb:perturbed-helmholtz-DL} for sufficiently small $h$. Then
  \begin{align*}
    \Vert p - \hat{p}_h\Vert_{-m-2} \leq c\left[h^{2m+3}\Vert f\Vert_{m+2} + h^{\ell+1}\Vert f\Vert_{1}\right].
  \end{align*}
\end{theorem}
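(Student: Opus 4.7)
The plan is to closely mirror the proof of \cref{thm:weaker-norm-laplace-SL}, applying an Aubin--Nitsche duality argument. I would start by writing
\begin{align*}
\Vert p - \hat{p}_h\Vert_{-m-2} = \sup_{g \in H^{m+2}(\Gamma)} \frac{\vert\langle g, p - \hat{p}_h\rangle\vert}{\Vert g\Vert_{m+2}},
\end{align*}
and introducing the dual problem: find $q \in H^{-1/2}(\Gamma)$ such that $b(r,q) = \langle g, r\rangle$ for all $r \in H^{-1/2}(\Gamma)$. Unlike the Laplace case, where the bilinear form is symmetric and the dual coincides with the primal problem, here one must invoke well-posedness and regularity of the adjoint of $S$, which is essentially the Helmholtz single-layer operator with conjugated kernel (wavenumber $-k$). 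Under the same hypothesis that $k^2$ is not a Dirichlet eigenvalue of $-\Delta$ in $\Omega$, this adjoint is an isomorphism between $H^s(\Gamma)$ and $H^{s+1}(\Gamma)$, yielding $q \in H^{m+1}(\Gamma)$ with $\Vert q\Vert_{m+1} \leq c\Vert g\Vert_{m+2}$.

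I would then split the error using the $L^2(\Gamma)$-projector $\hat{s}_h$ onto $\hat{V}_h$ as
\begin{align*}
b(p - \hat{p}_h, q) = b(p - \hat{p}_h, q - \hat{s}_h q) + b(p - \hat{p}_h, \hat{s}_h q).
\end{align*}
The first term is bounded using the continuity of $b$ on $H^{-1/2}(\Gamma) \times H^{-1/2}(\Gamma)$, the intrinsic-norm estimate of \cref{thm:intrinsic-norm-helmholtz-SL}, and the approximation estimate $\Vert q - \hat{s}_h q\Vert_{-1/2} \leq c h^{m+3/2}\Vert q\Vert_{m+1}$ from \cref{lem:approximation-SL}, producing contributions of order $h^{2m+3}\Vert f\Vert_{m+2} + h^{\ell+m+2}\Vert f\Vert_1$ times $\Vert q\Vert_{m+1}$.

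For the second term, I would combine the primal Galerkin identity $b(p, \hat{s}_h q) = \langle f, \hat{s}_h q\rangle$ with the perturbed Galerkin equation $\hat{b}_h(\hat{p}_h, \hat{s}_h q) = (\hat{f}_h J_h^{-1}, \hat{s}_h q)$ to get
\begin{align*}
b(p - \hat{p}_h, \hat{s}_h q) = (f - \hat{f}_h J_h^{-1}, \hat{s}_h q) + \hat{b}_h(\hat{p}_h, \hat{s}_h q) - b(\hat{p}_h, \hat{s}_h q).
\end{align*}
The first of these is controlled by $c h^{\ell+1}\Vert f\Vert_1 \Vert\hat{s}_h q\Vert_0$ via the $L^2$-projection orthogonality \cref{eq:projection} and the geometric estimates of \cref{lem:geometry}, exactly as in the proof of \cref{thm:intrinsic-norm-helmholtz-SL}. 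The difference of sesquilinear forms is bounded by $c h^{\ell+1}\Vert\hat{p}_h\Vert_0 \Vert\hat{s}_h q\Vert_0$ from the consistency estimate \cref{lem:consistency-helmholtz-SL}, and $\Vert\hat{p}_h\Vert_0$ is controlled via $\Vert p\Vert_0 + \Vert p - \hat{p}_h\Vert_0$ together with \cref{thm:stronger-norm-helmholtz-SL}, yielding the additional terms $h^{\ell+1}\Vert f\Vert_1 + h^{\ell+m+2}\Vert f\Vert_{m+2} + h^{2\ell+1}\Vert f\Vert_1$. Taking the supremum over $g$ and retaining the dominant terms delivers the claimed bound.

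The only genuine novelty compared to the Laplace proof is the treatment of the dual problem; once the adjoint Helmholtz single-layer is identified and its regularity invoked, every remaining step is a transcription of the corresponding Laplace argument, since the consistency and approximation lemmas used are formally identical. I expect no serious obstacle beyond this bookkeeping.
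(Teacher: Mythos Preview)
Your proposal is correct and follows essentially the same approach as the paper: the paper's proof merely states that it mirrors \cref{thm:weaker-norm-laplace-SL} using \cref{thm:intrinsic-norm-helmholtz-SL}, \cref{thm:stronger-norm-helmholtz-SL}, and \cref{lem:approximation-SL}, with the only difference being the identification of the dual operator $S^*$ (the single-layer with kernel $e^{-ik|\bs{x}-\bs{y}|}/|\bs{x}-\bs{y}|$), which shares the mapping properties of $S$. Your write-up is in fact more detailed than the paper's own sketch, but the argument is the same.
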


\begin{proof}
  The proof is similar to that of \cref{thm:weaker-norm-laplace-SL}, and is based on \cref{thm:intrinsic-norm-helmholtz-SL}, \cref{thm:stronger-norm-helmholtz-SL}, and \cref{lem:approximation-SL}. The only difference is that the dual problem reads
  \begin{align*}
    b(p,q) = \langle g, p\rangle \quad \forall p\in H^{-1/2}(\Gamma),
  \end{align*}
  with $b(p,q) = \langle Sp, q\rangle = \overline{\langle S^*q,p\rangle} := b^*(q,p)$. The dual $S^*:H^{-1/2}(\Gamma)\to H^{1/2}(\Gamma)$ is defined by
  \begin{align*}
    (S^*q)(\bs{x}) = \frac{1}{4\pi}\int_\Gamma \frac{e^{-ik\vert\bs{x}-\bs{y}\vert}}{\vert\bs{x}-\bs{y}\vert} q(\bs{y})d\Gamma(\bs{y}),
  \end{align*}
  and shares the same properties as \(S\).
\end{proof}

%%%%%%%%%%%%%%%%%%%%%%%%%%%%%%%%%%%%%%%%%%%%%%%%%%%%%%%%%%%%%%%%%%%%%%%%%%%%%%%%%%%%%%%%%%%%%%%%%%%%%%%%%%%%%%%%%%%%%%%%%%%%%%%%%%%%%%%%%%%%%%%%%%%%%%%
\begin{theorem}[Pointwise evaluation]\label{thm:ptwise-eval-helmholtz-SL}
  Let $p$ and $\hat{p}_h$ denote the solutions to \cref{pb:weak-helmholtz-SL} and \cref{pb:perturbed-helmholtz-SL} for sufficiently small $h$. Then for all $\bs{x}\in\R^3\setminus\overline{\Omega}$
  \begin{align*}
     & \vert u(\bs{x}) - u_h(\bs{x})\vert \leq c_{\bs{x}} \left[h^{2m+3}\Vert f\Vert_{m+2} + h^{\ell+1}\Vert f\Vert_{1}\right],
  \end{align*}
  with $c_{\bs{x}}\to\infty$ as $\bs{x}\to\Gamma$.
\end{theorem}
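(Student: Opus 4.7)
The plan is to mirror closely the strategy used in \cref{thm:ptwise-eval-laplace-SL} for the Laplace single-layer case, replacing the Laplace kernel with the Helmholtz kernel and invoking the Helmholtz-specific error estimates established just above. First I would write the pointwise error as a single boundary integral by combining the representation formulas for $u$ and $u_h$ (after pulling $u_h$ back from $\Gamma_h$ to $\Gamma$ via $\Psi_h$), obtaining
\begin{align*}
u(\bs{x}) - u_h(\bs{x}) = \frac{1}{4\pi}\int_\Gamma\left[\frac{e^{ik\vert\bs{x}-\bs{y}\vert}}{\vert\bs{x}-\bs{y}\vert}p(\bs{y})-\frac{e^{ik\vert\bs{x}-\Psi^{-1}_h(\bs{y})\vert}}{\vert\bs{x}-\Psi^{-1}_h(\bs{y})\vert}\hat{p}_h(\bs{y})J^{-1}_h(\bs{y})\right]d\Gamma(\bs{y}).
\end{align*}

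Then I would split this into three pieces in analogy with the Laplace case: a density-difference term isolating $p-\hat{p}_h$ against the unperturbed kernel, a geometric-difference term isolating the Helmholtz kernel evaluated at $\bs{y}$ versus $\Psi^{-1}_h(\bs{y})$, and a Jacobian-difference term isolating $1-J^{-1}_h$.

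For the density-difference term I would use duality: bound it by $c\Vert p-\hat{p}_h\Vert_{-m-2}\Vert G_k(\bs{x},\cdot)\Vert_{m+2}$, where $G_k(\bs{x},\bs{y})=e^{ik\vert\bs{x}-\bs{y}\vert}/\vert\bs{x}-\bs{y}\vert$. For fixed $\bs{x}\in\R^3\setminus\overline{\Omega}$, the map $\bs{y}\mapsto G_k(\bs{x},\bs{y})$ is smooth on $\Gamma$, so $\Vert G_k(\bs{x},\cdot)\Vert_{m+2}\leq c_{\bs{x}}$ with $c_{\bs{x}}\to\infty$ as $\bs{x}\to\Gamma$. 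Combined with \cref{thm:weaker-norm-helmholtz-SL}, this contributes exactly the desired $c_{\bs{x}}[h^{2m+3}\Vert f\Vert_{m+2}+h^{\ell+1}\Vert f\Vert_{1}]$.

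For the two geometric terms I would apply the pointwise geometric estimates from \cref{lem:geometry} (which give $\mathcal{O}(h^{\ell+1})$ for the kernel difference and for $1-J^{-1}_h$, uniformly on $\Gamma$ since $\bs{x}$ is bounded away from $\Gamma$ so the denominators are harmless), leading to a bound of the form $c_{\bs{x}}h^{\ell+1}\Vert\hat{p}_h\Vert_0$. I then control $\Vert\hat{p}_h\Vert_0$ by triangle inequality together with \cref{thm:stronger-norm-helmholtz-SL}:
\begin{align*}
\Vert\hat{p}_h\Vert_{0} \leq \Vert p\Vert_{0}+\Vert p-\hat{p}_h\Vert_{0} \leq c\left[\Vert p\Vert_{0}+h^{m+1}\Vert f\Vert_{m+2}+h^{\ell}\Vert f\Vert_{1}\right],
\end{align*}
so that these terms contribute at most $c_{\bs{x}}h^{\ell+1}\Vert f\Vert_{1}$ plus strictly higher-order contributions, which are absorbed into the stated rate.

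The only mild subtlety compared to the Laplace case is verifying that the exponential factor $e^{ik\vert\bs{x}-\bs{y}\vert}$ does not spoil the geometric estimates; this is straightforward because $\vert e^{ika}-e^{ikb}\vert\leq k\vert a-b\vert$ and $\vert e^{ika}\vert=1$, so the difference of Helmholtz kernels can be split as (difference of Laplace kernels)$\times$(bounded phase) plus (bounded modulus)$\times$(difference of exponentials), each factor yielding an $\mathcal{O}(h^{\ell+1})$ bound by \cref{lem:geometry}. No new ingredients beyond those already invoked in the Laplace proof and the Helmholtz theorems above are required; the main obstacle, such as it is, lies in confirming that $c_{\bs{x}}$ depends only on the distance of $\bs{x}$ to $\Gamma$ and the wavenumber $k$, but not on $h$.
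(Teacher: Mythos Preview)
Your proposal is correct and follows essentially the same approach as the paper: the same integral representation, the same three-term split (density difference, kernel difference, Jacobian difference), and the same bounds via \cref{thm:weaker-norm-helmholtz-SL}, \cref{lem:geometry}, and \cref{thm:stronger-norm-helmholtz-SL}. Your additional remarks on handling the exponential factor and on controlling $\Vert\hat{p}_h\Vert_0$ simply make explicit what the paper leaves implicit.
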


\begin{proof}
  Let $\bs{x}\in\R^3\setminus\overline{\Omega}$. We write
  \begin{align*}
    u(\bs{x}) - u_h(\bs{x}) = \frac{1}{4\pi}\int_\Gamma\left[\frac{e^{ik\vert\bs{x}-\bs{y}\vert}}{\vert\bs{x}-\bs{y}\vert}p(\bs{y})-\frac{e^{ik\vert\bs{x}-\Psi^{-1}_h(\bs{y})\vert}}{\vert\bs{x}-\Psi^{-1}_h(\bs{y})\vert}\hat{p}_h(\bs{y})J^{-1}_h(\bs{y})\right]d\Gamma(\bs{y}),
  \end{align*}
  which we split into
  \begin{align*}
    u(\bs{x}) - u_h(\bs{x}) = & \; \frac{1}{4\pi}\int_\Gamma\left[\frac{e^{ik\vert\bs{x}-\bs{y}\vert}}{\vert\bs{x}-\bs{y}\vert}p(\bs{y})-\frac{e^{ik\vert\bs{x}-\bs{y}\vert}}{\vert\bs{x}-\bs{y}\vert}\hat{p}_h(\bs{y})\right]d\Gamma(\bs{y})                      \\
                              & \; + \frac{1}{4\pi}\int_\Gamma\hat{p}_h(\bs{y})\left[\frac{e^{ik\vert\bs{x}-\bs{y}\vert}}{\vert\bs{x}-\bs{y}\vert}-\frac{e^{ik\vert\bs{x} -\Psi^{-1}_h(\bs{y})\vert}}{\vert\bs{x} -\Psi^{-1}_h(\bs{y})\vert}\right]d\Gamma(\bs{y}) \\
                              & \; + \frac{1}{4\pi}\int_\Gamma\hat{p}_h(\bs{y})\frac{e^{ik\vert\bs{x}-\Psi^{-1}_h(\bs{y})\vert}}{\vert\bs{x} -\Psi^{-1}_h(\bs{y})\vert}(1 - J^{-1}_h(\bs{y}))d\Gamma(\bs{y}).
  \end{align*}
  We bound the first term by $c_{\bs{x}}\Vert p-\hat{p}_h\Vert_{-m-2}$ and the rest by $c_{\bs{x}}h^{\ell+1}\Vert\hat{p}_h\Vert_{0}$, with $c_{\bs{x}}\to\infty$ as $\bs{x}\to\Gamma$.
\end{proof}

%%%%%%%%%%%%%%%%%%%%%%%%%%%%%%%%%%%%%%%%%%%%%%%%%%%%%%%%%%%%%%%%%%%%%%%%%%%%%%%%%%%%%%%%%%%%%%%%%%%%%%%%%%%%%%%%%%%%%%%%%%%%%%%%%%%%%%%%%%%%%%%%%%%%%%%
\subsection{Double-layer potential}

We consider the following weak formulation of \cref{pb:helmholtz-DL}. We assume that $k^2$ is not a Neumann eigenvalue of $-\Delta$ in $\Omega$.

%%%%%%%%%%%%%%%%%%%%%%%%%%%%%%%%%%%%%%%%%%%%%%%%%%%%%%%%%%%%%%%%%%%%%%%%%%%%%%%%%%%%%%%%%%%%%%%%%%%%%%%%%%%%%%%%%%%%%%%%%%%%%%%%%%%%%%%%%%%%%%%%%%%%%%
\begin{problem}[Weak formulation]\label{pb:weak-helmholtz-DL}
Find $p\in L^{2}(\Gamma)$ such that
\begin{align*}
  b(p,q) = (f, q) \quad \forall q\in L^{2}(\Gamma),
\end{align*}
with $b:L^{2}(\Gamma)\times L^{2}(\Gamma)\to\C$ defined by $b(p,q) = \frac{1}{2}(p,q) + (Dp,q)$, that is,
\begin{align*}
  b(p,q) = \frac{1}{2} \int_{\Gamma} p(\bs{x})\overline{q(\bs{x})} d\Gamma(\bs{x}) +\frac{1}{4\pi}\int_\Gamma \int_{\Gamma}\frac{\partial }{\partial \bs{n}(\bs{y})} \left(\frac{e^{ik\vert\bs{x}-\bs{y}\vert}}{\vert\bs{x}-\bs{y}\vert}\right) p(\bs{y})\overline{q(\bs{x})} d\Gamma(\bs{y})d\Gamma(\bs{x}).
\end{align*}
We assume \( f \in H^{m+1}(\Gamma) \), so that \( p \in H^{m+1}(\Gamma) \), where \( m \geq 0 \) is the polynomial degree in \( V_h \).
\end{problem}

Let $r$ be the difference between the double-layer operator for Helmholtz and Laplace, i.e.,
\begin{align*}
  r(p,q)= ((D-D_0)p,q) = \frac{1}{4\pi}\int_\Gamma \int_{\Gamma}\frac{\partial }{\partial \bs{n}(\bs{y})}\left(\frac{e^{ik\vert\bs{x}-\bs{y}\vert}-1}{\vert\bs{x}-\bs{y}\vert}\right) p(\bs{y})\overline{q(\bs{x})} d\Gamma(\bs{y})d\Gamma(\bs{x}).
\end{align*}
Since $b$ is injective and can be rewritten as $b(p,q)=a(p,q)+t(p,q)$ with
\begin{align*}
  a(p,q) =\frac{1}{2}(p,q) \quad \text{(coercive)}, \qquad t(p,q) = r(p,q) + (D_0p,q) \quad \text{(compact)},
\end{align*}
there is a unique solution to \cref{pb:weak-helmholtz-DL} by Fredholm's alternative (\cref{thm:fredholm}). Note that both $D-D_0$ and $D_0$ are compact in $L^2(\Gamma)$ since $\Gamma$ is smooth \cite{chandler2012}. Here, the approximation space $\hat{V}_h$ is the same as for \cref{pb:perturbed-laplace-DL} (the Laplace double-layer case).

%%%%%%%%%%%%%%%%%%%%%%%%%%%%%%%%%%%%%%%%%%%%%%%%%%%%%%%%%%%%%%%%%%%%%%%%%%%%%%%%%%%%%%%%%%%%%%%%%%%%%%%%%%%%%%%%%%%%%%%%%%%%%%%%%%%%%%%%%%%%%%%%%%%%%%
\begin{problem}[Perturbed Galerkin approximation problem]\label{pb:perturbed-helmholtz-DL}
Find $\hat{p}_h\in\hat{V}_h$ such that
\begin{align*}
  \hat{b}_h(\hat{p}_h,\hat{q}_h) = (\hat{f}_h J^{-1}_h, \hat{q}_h) \quad \forall \hat{q}_h\in \hat{V}_h,
\end{align*}
with $\hat{b}_h:L^{2}(\Gamma)\times L^{2}(\Gamma)\to\C$ contains the same terms as in the Laplace problem (see \cref{pb:perturbed-laplace-DL}), together with the additional perturbed term
\begin{align*}
  \hat{r}_h(\hat{p},\hat{q}) = & \; \frac{1}{4\pi}\int_{\Gamma}\int_{\Gamma}\frac{(1-ik\vert \Psi^{-1}_h(\bs{x})-\Psi^{-1}_h(\bs{y})\vert)e^{ik\vert\Psi^{-1}_h(\bs{x})-\Psi^{-1}_h(\bs{y})\vert}-1}{\vert\Psi^{-1}_h(\bs{x})-\Psi^{-1}_h(\bs{y})\vert^3} \\
                               & \; \times (\Psi^{-1}_h(\bs{x})-\Psi^{-1}_h(\bs{y})) \cdot \hat{\bs{n}}_h(\bs{y}) \hat{p}(\bs{y}) \overline{\hat{q}(\bs{x})} J^{-1}_h(\bs{y})J^{-1}_h(\bs{x}) d\Gamma(\bs{y})d\Gamma(\bs{x}).
\end{align*}
\end{problem}

The sesquilinear form in \cref{pb:perturbed-helmholtz-DL} uses the curved-element normal \(\hat{\bs{n}}_h\). For the analysis, we will also consider the interpolated normal \(\hat{\bs{\nu}}_h\).

%%%%%%%%%%%%%%%%%%%%%%%%%%%%%%%%%%%%%%%%%%%%%%%%%%%%%%%%%%%%%%%%%%%%%%%%%%%%%%%%%%%%%%%%%%%%%%%%%%%%%%%%%%%%%%%%%%%%%%%%%%%%%%%%%%%%%%%%%%%%%%%%%%%%%%
\begin{lemma}[Consistency]\label{lem:consistency-helmholtz-DL}
  There exists \(h_0>0\) such that for all \(h\leq h_0\), the sesquilinear forms defined in \cref{pb:weak-helmholtz-DL} and \cref{pb:perturbed-helmholtz-DL} satisfy the consistency conditions
  \begin{align*}
    \vert b(\hat{p}_h,\hat{q}_h) - \hat{b}_h(\hat{p}_h,\hat{q}_h)\vert \leq c_\epsilon h^{\ell} \Vert\hat{p}_h\Vert_{\epsilon}\Vert\hat{q}_h\Vert_{0} \quad \forall \epsilon\in(0,1), \; \forall\hat{p}_h,\hat{q}_h\in\hat{V}_h,
  \end{align*}
  with $c_\epsilon\to\infty$ as $\epsilon\to0$. Using the interpolated normal \(\hat{\bs{\nu}}_h\) improves the geometric error to \(h^{\ell+1}\).
\end{lemma}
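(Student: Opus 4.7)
The plan is to mirror the proof of \cref{lem:consistency-laplace-DL}, with the Helmholtz-specific term treated as a lower-order perturbation. Following the decomposition $D = D_0 + (D - D_0)$ already used in \cref{pb:weak-helmholtz-DL} and the corresponding perturbed split in \cref{pb:perturbed-helmholtz-DL}, I would first write
\begin{align*}
  b(\hat{p}_h,\hat{q}_h) - \hat{b}_h(\hat{p}_h,\hat{q}_h) = \bigl[b_0(\hat{p}_h,\hat{q}_h) - \hat{b}_{0,h}(\hat{p}_h,\hat{q}_h)\bigr] + \bigl[r(\hat{p}_h,\hat{q}_h) - \hat{r}_h(\hat{p}_h,\hat{q}_h)\bigr],
\end{align*}
where $b_0$ and $\hat{b}_{0,h}$ denote the Laplace double-layer forms and their perturbed counterparts from \cref{pb:weak-laplace-DL,pb:perturbed-laplace-DL} (up to the sign adjustment enforced by the identity $\tfrac{1}{2}=-\tfrac{1}{4\pi}\int_\Gamma \partial_{\bs{n}(\bs{y})}(1/|\bs{x}-\bs{y}|)\,d\Gamma(\bs{y})$ inherited from the Laplace setting). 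The first bracket is then bounded directly by \cref{lem:consistency-laplace-DL}, yielding $c_\epsilon h^\ell \Vert\hat{p}_h\Vert_\epsilon \Vert\hat{q}_h\Vert_0$ with the element normal, and $c h^{\ell+1}\Vert\hat{p}_h\Vert_0\Vert\hat{q}_h\Vert_0$ with $\hat{\bs{\nu}}_h$.

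For the second bracket, the crucial observation is the Taylor expansion $(1-ik\rho)e^{ik\rho} - 1 = \tfrac{1}{2}k^2\rho^2 + O(\rho^3)$, which shows that the integrand of $r$,
\begin{align*}
  \frac{(1-ik|\bs{x}-\bs{y}|)\,e^{ik|\bs{x}-\bs{y}|} - 1}{|\bs{x}-\bs{y}|^3}\,(\bs{x}-\bs{y})\cdot\bs{n}(\bs{y}),
\end{align*}
is uniformly bounded on $\Gamma\times\Gamma$: the prefactor is $O(|\bs{x}-\bs{y}|^{-1})$, while $(\bs{x}-\bs{y})\cdot\bs{n}(\bs{y}) = O(|\bs{x}-\bs{y}|^2)$ by smoothness of $\Gamma$. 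I would then expand $r - \hat{r}_h$ using the same kernel/Jacobian/normal decomposition as in the proof of \cref{lem:consistency-laplace-DL}. Each resulting piece integrates a bounded (no longer singular) kernel against $\hat{p}_h\,\overline{\hat{q}_h}$, multiplied by $O(h^{\ell+1})$ for the kernel and Jacobian differences and by $O(h^\ell)$ for the element-normal difference (improving to $O(h^{\ell+1})$ with $\hat{\bs{\nu}}_h$) via \cref{lem:geometry}. A direct Cauchy--Schwarz estimate then yields $c h^\ell \Vert\hat{p}_h\Vert_0\Vert\hat{q}_h\Vert_0$ (respectively $c h^{\ell+1}\Vert\hat{p}_h\Vert_0\Vert\hat{q}_h\Vert_0$), both of which are absorbed in the bound obtained from the Laplace part.

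The bulk of the analytic work is thus inherited from \cref{lem:consistency-laplace-DL}; the main remaining obstacle is simply to verify that the Helmholtz-specific remainder $r-\hat{r}_h$ behaves no worse than the Laplace contribution. A key simplification is that the Sobolev--Slobodeckij seminorm trick needed in the Laplace proof, used there to tame the strong singularity of $\partial_{\bs{n}(\bs{y})}(1/|\bs{x}-\bs{y}|)$, is \emph{not} required for $r-\hat{r}_h$, since its kernel is regular and admits a straightforward $L^2\times L^2$ estimate. Combining the two bounds then yields the claimed consistency estimate in both the element-normal and interpolated-normal cases.
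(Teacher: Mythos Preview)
Your proposal is correct and follows the same overall architecture as the paper: both split $b-\hat b_h$ into the Laplace double-layer consistency error, handled directly by \cref{lem:consistency-laplace-DL}, plus the Helmholtz-specific remainder $r-\hat r_h$, which admits an $L^2(\Gamma)\times L^2(\Gamma)$ estimate of order $h^\ell$ (or $h^{\ell+1}$ with the interpolated normal) without recourse to the Sobolev--Slobodeckij argument.

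The one genuine difference is in how you bound $r-\hat r_h$. You invoke the sharp Taylor bound $|s(\bs x,\bs y)|=O(|\bs x-\bs y|^{-1})$ together with the smooth-surface cancellation $(\bs x-\bs y)\cdot\bs n(\bs y)=O(|\bs x-\bs y|^2)$ to argue that every piece of the decomposition has a \emph{bounded} kernel, then apply Cauchy--Schwarz. The paper instead uses the coarser bound $|s|\le c|\bs x-\bs y|^{-2}$, obtains pieces with a residual $|\bs x-\bs y|^{-1}$ singularity, and absorbs this via the continuity of the Laplace single layer in $L^2\times L^2$. Your route is a touch more elementary, but be careful: not every term of the telescoping split inherits both regularising mechanisms simultaneously (for instance, the piece carrying $[(\bs x-\bs y)-(\Psi_h^{-1}(\bs x)-\Psi_h^{-1}(\bs y))]\cdot\bs n(\bs y)\,s_h$ is only $O(h^{\ell+1}|\bs x-\bs y|^{-1})$ with the pointwise estimates of \cref{lem:geometry}), so one term may still need the single-layer $L^2\times L^2$ bound as a fallback. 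This does not affect the conclusion.

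One small slip: the Laplace consistency bound with the interpolated normal is still $c_\epsilon h^{\ell+1}\|\hat p_h\|_\epsilon\|\hat q_h\|_0$, not $ch^{\ell+1}\|\hat p_h\|_0\|\hat q_h\|_0$; the $\epsilon$-dependence is intrinsic to the $|\bs x-\bs y|^{-2}$ kernel in the Laplace part and persists regardless of which normal is used. This is harmless for the final statement.
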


\begin{proof}
  Using the consistency result for the Laplace problem (\cref{lem:consistency-laplace-DL}), we have
  \begin{align*}
    \vert b(\hat{p}_h,\hat{q}_h) - \hat{b}_h(\hat{p}_h,\hat{q}_h)\vert \leq c_\epsilon h^{\ell} \Vert\hat{p}_h\Vert_{\epsilon}\Vert\hat{q}_h\Vert_{0} + \vert r(\hat{p}_h,\hat{q}_h) - \hat{r}_h(\hat{p}_h,\hat{q}_h)\vert,
  \end{align*}
  with $r(\hat{p}_h,\hat{q}_h) - \hat{r}_h(\hat{p}_h,\hat{q}_h) = \frac{1}{4\pi}\int_{\Gamma}\int_{\Gamma}\delta_h(\bs{x},\bs{y})\hat{p}_h(\bs{y})\overline{\hat{q}_h(\bs{x})}d\Gamma(\bs{y})d\Gamma(\bs{x})$ where
  \begin{align*}
     & \delta_h(\bs{x},\bs{y}) = s(\bs{x},\bs{y})(\bs{x}-\bs{y})\cdot\bs{n}(\bs{y}) - s_h(\bs{x},\bs{y})(\Psi^{-1}_h(\bs{x})-\Psi^{-1}_h(\bs{y}))\cdot\hat{\bs{n}}_h(\bs{y})J^{-1}_h(\bs{y})J^{-1}_h(\bs{x}), \\
     & s(\bs{x},\bs{y}) = \frac{(1-ik\vert\bs{x}-\bs{y}\vert)e^{ik\vert\bs{x}-\bs{y}\vert}-1}{\vert\bs{x}-\bs{y}\vert^3}, \\
     & s_h(\bs{x},\bs{y}) = \frac{(1-ik\vert\Psi^{-1}_h(\bs{x})-\Psi^{-1}_h(\bs{y})\vert)e^{ik\vert\Psi^{-1}_h(\bs{x})-\Psi^{-1}_h(\bs{y})\vert}-1}{\vert\Psi^{-1}_h(\bs{x})-\Psi^{-1}_h(\bs{y})\vert^3}.
  \end{align*}
  We write
  \begin{align*}
    \delta_h(\bs{x},\bs{y}) = & \; s(\bs{x},\bs{y})\left[(\bs{x}-\bs{y})\cdot\bs{n}(\bs{y}) - (\Psi^{-1}_h(\bs{x})-\Psi^{-1}_h(\bs{y}))\cdot\hat{\bs{n}}_h(\bs{y})\right]               \\
                            & \; + s(\bs{x},\bs{y})(\Psi^{-1}_h(\bs{x})-\Psi^{-1}_h(\bs{y}))\cdot\hat{\bs{n}}_h(\bs{y}) [1 - J^{-1}_h(\bs{y})J^{-1}_h(\bs{x})]                        \\
                            & \; + (\Psi^{-1}_h(\bs{x})-\Psi^{-1}_h(\bs{y}))\cdot\hat{\bs{n}}_h(\bs{y}) J^{-1}_h(\bs{y})J^{-1}_h(\bs{x}) [s(\bs{x},\bs{y})-s_h(\bs{x},\bs{y})].
  \end{align*}
  The first two terms can be bounded by \(c h^{\ell}/\lvert\bs{x}-\bs{y}\rvert^{-1}\) using \cref{lem:geometry} and $\lvert s(\bs{x},\bs{y})\rvert \leq c\lvert\bs{x}-\bs{y}\rvert^{-2}$. Using the continuity of the Laplace single-layer in \(L^2(\Gamma)\times L^2(\Gamma)\), we obtain consistency of the first two terms in \(L^2(\Gamma)\times L^2(\Gamma)\). To bound the third term, it is sufficient to show that
  \begin{align*}
    \lvert s(\bs{x},\bs{y})-s_h(\bs{x},\bs{y})\rvert\leq c \frac{h^{\ell+1}}{\lvert \bs{x} - \bs{y}\rvert^2},
  \end{align*}
  since $\vert\Psi^{-1}_h(\bs{x})-\Psi^{-1}_h(\bs{y})\vert\leq c\vert\bs{x}-\bs{y}\vert$. Indeed we have
  \begin{align*}
    s(\bs{x},\bs{y})-s_h(\bs{x},\bs{y}) = & \; (e^{ik\vert\bs{x}-\bs{y}\vert}-1)\left(\frac{1}{\lvert\bs{x}-\bs{y} \rvert^3}-\frac{1}{\lvert\Psi^{-1}_h(\bs{x})-\Psi^{-1}_h(\bs{y}) \rvert^3}\right)                  \\
                                                & \; + \frac{1}{\lvert\Psi^{-1}_h(\bs{x})-\Psi^{-1}_h(\bs{y}) \rvert^3} \left(e^{ik\vert\bs{x}-\bs{y}\vert}-e^{ik\vert\Psi^{-1}_h(\bs{x})-\Psi^{-1}_h(\bs{y})\vert}\right)  \\
                                                & \; - ik e^{ik\vert\bs{x}-\bs{y}\vert} \left( \frac{1}{\lvert\bs{x}-\bs{y} \rvert^2}-\frac{1}{\lvert \Psi^{-1}_h(\bs{x})-\Psi^{-1}_h(\bs{y}) \rvert^2} \right)             \\
                                                & \; - \frac{ik}{\lvert\Psi^{-1}_h(\bs{x})-\Psi^{-1}_h(\bs{y}) \rvert^2}\left(e^{ik\vert\bs{x}-\bs{y}\vert}-e^{ik\vert\Psi^{-1}_h(\bs{x})-\Psi^{-1}_h(\bs{y})\vert}\right),
  \end{align*}
  where all four terms can be bounded by \(ch^{\ell+1}/\lvert\bs{x} - \bs{y}\rvert^{-2}\) using \cref{lem:geometry}. With the interpolated normal, the \( h^\ell \) terms improve to \( h^{\ell+1} \).
\end{proof}

%%%%%%%%%%%%%%%%%%%%%%%%%%%%%%%%%%%%%%%%%%%%%%%%%%%%%%%%%%%%%%%%%%%%%%%%%%%%%%%%%%%%%%%%%%%%%%%%%%%%%%%%%%%%%%%%%%%%%%%%%%%%%%%%%%%%%%%%%%%%%%%%%%%%%%
\begin{theorem}[Intrinsic norm]\label{thm:intrinsic-norm-helmholtz-DL}
  Let $p$ and $\hat{p}_h$ denote the solutions to \cref{pb:weak-helmholtz-DL} and \cref{pb:perturbed-helmholtz-DL} for sufficiently small $h$. Then
  \begin{align*}
    \Vert p - \hat{p}_h\Vert_{0} \leq c\big[h^{m+1}\Vert f\Vert_{m+1} + h^{\ell}\Vert f\Vert_{1}\big].
  \end{align*}
  Using the interpolated normal \(\hat{\bs{\nu}}_h\) improves the geometric error to \(h^{\ell+1}\).
\end{theorem}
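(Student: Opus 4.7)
The plan is to mirror the proof of \cref{thm:intrinsic-norm-laplace-DL} almost verbatim, the only new ingredient being the handling of the additional Helmholtz perturbation term $\hat{r}_h$, which has already been absorbed into the sesquilinear consistency estimate of \cref{lem:consistency-helmholtz-DL}. Before invoking Strang, I would first confirm that $\hat{b}_h$ satisfies the uniform discrete inf-sup conditions on $L^2(\Gamma)\times L^2(\Gamma)$ for sufficiently small $h$: combining \cref{lem:consistency-helmholtz-DL} with an inverse Sobolev inequality $\Vert\hat{p}_h\Vert_\epsilon \leq c h^{-\epsilon}\Vert\hat{p}_h\Vert_0$ yields an $h^{\ell-\epsilon}$ bound on $L^2(\Gamma)\times L^2(\Gamma)$, which is sufficient to conclude via \cref{rem:uniform-infsup} since $\ell \geq 1$.

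I would then apply \cref{lem:strang-B} with the test choice $v_h = \hat{s}_h p$, where $\hat{s}_h$ denotes the $L^2(\Gamma)$-orthogonal projector onto $\hat{V}_h$, which produces three suprema to estimate. The right-hand side term is handled exactly as in the proof of \cref{thm:intrinsic-norm-laplace-SL}: rewriting
\begin{align*}
(f - \hat{f}_h J_h^{-1}, \hat{q}_h) = (F|_{\Gamma_h} - f_h, q_h)_h + (F|_{\Gamma_h} J_h - F|_{\Gamma_h}, q_h)_h,
\end{align*}
the first scalar product vanishes by the $L^2(\Gamma_h)$-projection property \cref{eq:projection}, and the second is bounded by $c h^{\ell+1}\Vert f\Vert_0 \Vert\hat{q}_h\Vert_0 \leq c h^{\ell+1}\Vert f\Vert_1\Vert\hat{q}_h\Vert_0$ via \cref{lem:geometry}. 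The best-approximation term is bounded directly by $\Vert p - \hat{s}_h p\Vert_0 \leq c h^{m+1}\Vert p\Vert_{m+1} \leq c h^{m+1}\Vert f\Vert_{m+1}$ using \cref{lem:approximation-DL}.

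The sesquilinear-form consistency term is the place where the Helmholtz case really differs from Laplace, but \cref{lem:consistency-helmholtz-DL} was designed precisely to give a bound of the same form, namely $c_\epsilon h^\ell \Vert\hat{s}_h p\Vert_\epsilon \Vert\hat{q}_h\Vert_0$ for any fixed $\epsilon \in (0,1)$. I would then use stability of $\hat{s}_h$ in $H^\epsilon(\Gamma)$ together with \cref{lem:approximation-DL} to estimate $\Vert\hat{s}_h p\Vert_\epsilon \leq \Vert p\Vert_1 \leq c\Vert f\Vert_1$, which produces the claimed $h^\ell \Vert f\Vert_1$ geometric error. When the interpolated normal $\hat{\bs{\nu}}_h$ is used, \cref{lem:consistency-helmholtz-DL} improves the geometric factor to $h^{\ell+1}$, and every subsequent inequality transports this improvement down to the final estimate.

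The main obstacle I anticipate is the same subtlety already flagged in the Laplace double-layer analysis: the consistency bound comes with a fractional Sobolev norm $\Vert\hat{s}_h p\Vert_\epsilon$ rather than $\Vert\hat{s}_h p\Vert_0$, so one cannot simply cite $L^2$-stability of the projector. Fortunately, since we assume $f \in H^{m+1}(\Gamma)$ with $m \geq 0$, the solution lies in $H^1(\Gamma) \hookrightarrow H^\epsilon(\Gamma)$, and \cref{lem:approximation-DL} delivers the $H^\epsilon$-stability of $\hat{s}_h$ needed to close the argument. Nothing else in the proof differs structurally from the Laplace case, so the whole estimate follows by collecting the three bounds.
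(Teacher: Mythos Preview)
Your proposal is correct and follows essentially the same approach as the paper, which simply says to combine \cref{lem:consistency-helmholtz-DL}, \cref{lem:approximation-DL}, and \cref{lem:strang-B} as in the proof of \cref{thm:intrinsic-norm-laplace-DL}. Your expanded argument---verifying the uniform discrete inf-sup via the $h^{\ell-\epsilon}$ consistency bound, applying Strang with $v_h=\hat{s}_hp$, and bounding the three resulting terms exactly as in the Laplace case---is precisely what the paper intends.
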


\begin{proof}
  We combine \cref{lem:consistency-helmholtz-DL,lem:approximation-DL} with \cref{lem:strang-B}, as in the proof of \cref{thm:intrinsic-norm-laplace-DL}.
\end{proof}

%%%%%%%%%%%%%%%%%%%%%%%%%%%%%%%%%%%%%%%%%%%%%%%%%%%%%%%%%%%%%%%%%%%%%%%%%%%%%%%%%%%%%%%%%%%%%%%%%%%%%%%%%%%%%%%%%%%%%%%%%%%%%%%%%%%%%%%%%%%%%%%%%%%%%%
\begin{theorem}[Stronger norm]\label{thm:stronger-norm-helmholtz-DL}
  Let $p$ and $\hat{p}_h$ denote the solutions to \cref{pb:weak-helmholtz-DL} and \cref{pb:perturbed-helmholtz-DL} for sufficiently small $h$. Then
  \begin{align*}
    \Vert p - \hat{p}_h\Vert_{1/2} \leq c\left[h^{m+1/2}\Vert f\Vert_{m+1} + h^{\ell-1/2}\Vert f\Vert_{1}\right].
  \end{align*}
  Using the interpolated normal \(\hat{\bs{\nu}}_h\) improves the geometric error to \(h^{\ell+1/2}\).
\end{theorem}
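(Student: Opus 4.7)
The plan is to follow the same strategy as in the proof of \cref{thm:stronger-norm-laplace-DL}, substituting the Helmholtz-specific ingredients where needed. First, I would introduce the $L^2(\Gamma)$-orthogonal projector $\hat{s}_h$ onto $\hat{V}_h$ and apply the triangle inequality to split
\begin{align*}
  \Vert p - \hat{p}_h\Vert_{1/2} \leq \Vert p - \hat{s}_h p\Vert_{1/2} + \Vert\hat{s}_h p - \hat{p}_h\Vert_{1/2}
\end{align*}
into a pure approximation error plus a discrete error. This is the natural splitting since $\hat{s}_h p - \hat{p}_h$ lies in the discrete space, where inverse Sobolev inequalities are available.

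The first term is handled directly by the approximation estimate of \cref{lem:approximation-DL}, yielding $\Vert p - \hat{s}_h p\Vert_{1/2} \leq c h^{m+1/2}\Vert p\Vert_{m+1}$, and the Sobolev regularity stated in \cref{pb:weak-helmholtz-DL} converts this into $c h^{m+1/2}\Vert f\Vert_{m+1}$.

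For the second term, since $\hat{s}_h p - \hat{p}_h \in \hat{V}_h$, I would invoke the inverse Sobolev inequality from \cref{lem:approximation-DL} to trade a half-order of regularity for a factor $h^{-1/2}$, reducing the bound to $L^2(\Gamma)$-norms:
\begin{align*}
  \Vert\hat{s}_h p - \hat{p}_h\Vert_{1/2} \leq c h^{-1/2}\big[\Vert\hat{s}_h p - p\Vert_0 + \Vert p - \hat{p}_h\Vert_0\big].
\end{align*}
The first summand is bounded once more by \cref{lem:approximation-DL}, and the second summand by \cref{thm:intrinsic-norm-helmholtz-DL}, which provides exactly the $L^2$-estimate needed. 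Combining everything and using $\Vert p\Vert_{m+1} \leq c\Vert f\Vert_{m+1}$ yields the announced rate.

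The improvement under the interpolated normal $\hat{\bs{\nu}}_h$ propagates automatically: \cref{thm:intrinsic-norm-helmholtz-DL} then produces $h^{\ell+1}$ instead of $h^\ell$, and after paying the $h^{-1/2}$ loss from the inverse inequality the geometric contribution becomes $h^{\ell+1/2}$, as claimed. I do not anticipate a genuine obstacle here: all Helmholtz-specific analytical work (the compact perturbation and the consistency of the extra term $r - \hat{r}_h$ involving the derivative of $(e^{ik|\cdot|}-1)/|\cdot|$) has already been absorbed into \cref{thm:intrinsic-norm-helmholtz-DL} via \cref{lem:consistency-helmholtz-DL}. Consequently, this proof is essentially a cosmetic adaptation of the Laplace double-layer argument.
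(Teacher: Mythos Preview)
Your proposal is correct and follows essentially the same approach as the paper, which simply states that the result follows from \cref{thm:intrinsic-norm-helmholtz-DL} and \cref{lem:approximation-DL} by the argument of \cref{thm:stronger-norm-laplace-DL}. Your write-up spells out that argument accurately, including the triangle-inequality splitting, the inverse inequality on the discrete piece, and the propagation of the interpolated-normal improvement.
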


\begin{proof}
  The result follows from \cref{thm:intrinsic-norm-helmholtz-DL} and \cref{lem:approximation-DL}, as in the proof of \cref{thm:stronger-norm-laplace-DL}.
\end{proof}

%%%%%%%%%%%%%%%%%%%%%%%%%%%%%%%%%%%%%%%%%%%%%%%%%%%%%%%%%%%%%%%%%%%%%%%%%%%%%%%%%%%%%%%%%%%%%%%%%%%%%%%%%%%%%%%%%%%%%%%%%%%%%%%%%%%%%%%%%%%%%%%%%%%%%%
\begin{theorem}[Weaker norms]\label{thm:weaker-norm-helmholtz-DL}
  Let $p$ and $\hat{p}_h$ denote the solutions to \cref{pb:weak-helmholtz-DL} and \cref{pb:perturbed-helmholtz-DL} for sufficiently small $h$. Then
  \begin{align*}
    \Vert p - \hat{p}_h\Vert_{-m-1} \leq c\left[h^{2m+2}\Vert f\Vert_{m+1} + h^{\ell}\Vert f\Vert_{1}\right].
  \end{align*}
  Using the interpolated normal \(\hat{\bs{\nu}}_h\) improves the geometric error to \(h^{\ell+1}\).
\end{theorem}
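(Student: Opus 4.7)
The plan is to mirror the proof of \cref{thm:weaker-norm-laplace-DL}, introducing the adjoint Helmholtz problem in place of the Laplace Neumann adjoint. First I would use the duality characterization
\begin{align*}
  \Vert p - \hat{p}_h\Vert_{-m-1} = \sup_{g\in H^{m+1}(\Gamma)}\frac{\vert(g, p-\hat{p}_h)\vert}{\Vert g\Vert_{m+1}},
\end{align*}
and rewrite $(g, p - \hat{p}_h) = b(p - \hat{p}_h, q)$ where $q \in L^2(\Gamma)$ solves the dual problem $b(p,q) = (g,p)$ for all $p \in L^2(\Gamma)$. Writing $b(p,q) = (p,q)/2 + (Dp,q) = \overline{(q,p)/2 + (D^*q,p)}$, the dual operator is $D^*: L^2(\Gamma) \to L^2(\Gamma)$ with kernel obtained by swapping the normal and complex-conjugating the exponential; it shares the same compactness and mapping properties as $D$, and since $k^2$ is not a Neumann eigenvalue of $-\Delta$ in $\Omega$, the dual problem is well-posed with $q \in H^{m+1}(\Gamma)$ and $\Vert q\Vert_{m+1} \leq c\Vert g\Vert_{m+1}$.

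Next I would split $b(p-\hat{p}_h, q) = b(p-\hat{p}_h, q - \hat{s}_h q) + b(p-\hat{p}_h, \hat{s}_h q)$ exactly as in the Laplace case. For the first piece, continuity of $b$ on $L^2(\Gamma)\times L^2(\Gamma)$, \cref{thm:intrinsic-norm-helmholtz-DL}, and \cref{lem:approximation-DL} yield
\begin{align*}
  \vert b(p-\hat{p}_h, q - \hat{s}_h q)\vert \leq c\left[h^{2m+2}\Vert f\Vert_{m+1} + h^{\ell+m+1}\Vert f\Vert_1\right]\Vert q\Vert_{m+1},
\end{align*}
with $h^{\ell+m+1}$ improved to $h^{\ell+m+2}$ under the interpolated normal. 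For the second piece, I use Galerkin-type identities $b(p, \hat{s}_h q) = (f, \hat{s}_h q)$ and $\hat{b}_h(\hat{p}_h, \hat{s}_h q) = (\hat{f}_h J_h^{-1}, \hat{s}_h q)$ to write
\begin{align*}
  b(p-\hat{p}_h, \hat{s}_h q) = (f - \hat{f}_h J_h^{-1}, \hat{s}_h q) + [\hat{b}_h(\hat{p}_h, \hat{s}_h q) - b(\hat{p}_h, \hat{s}_h q)].
\end{align*}
The right-hand side term is handled by the same $L^2$-projection plus geometric estimate trick as in \cref{thm:intrinsic-norm-laplace-SL}, giving $ch^{\ell+1}\Vert f\Vert_1 \Vert q\Vert_{m+1}$. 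The form-consistency term is bounded via \cref{lem:consistency-helmholtz-DL} with some $0 < \epsilon \leq 1/2$, yielding $ch^\ell \Vert\hat{p}_h\Vert_\epsilon \Vert\hat{s}_h q\Vert_0$; controlling $\Vert\hat{p}_h\Vert_\epsilon \leq \Vert\hat{p}_h\Vert_{1/2}$ via \cref{thm:stronger-norm-helmholtz-DL} and the triangle inequality produces the additional contributions $c[h^\ell\Vert f\Vert_1 + h^{\ell+m+1/2}\Vert f\Vert_{m+1} + h^{2\ell-1/2}\Vert f\Vert_1]\Vert q\Vert_{m+1}$, with the $h^\ell, h^{2\ell-1/2}$ terms improved by one power of $h$ under the interpolated normal, matching the Laplace pattern exactly. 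Summing all contributions and observing that the dominant terms are $h^{2m+2}\Vert f\Vert_{m+1}$ and $h^\ell\Vert f\Vert_1$ gives the claim.

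The main obstacle I anticipate is the verification that the Helmholtz adjoint problem genuinely enjoys the same regularity lift as its primal counterpart --- in particular, confirming that $D^*$ maps $H^{m+1}(\Gamma)\to H^{m+1}(\Gamma)$ with bounded inverse of $(I/2 + D^*)$ under the hypothesis on $k$. This follows from the fact that $D^*$ is the boundary double-layer operator for the complex-conjugated Helmholtz kernel, which corresponds to an exterior problem with opposite-sign radiation condition but identical eigenvalue structure, so the isomorphism results from \cite[Thm.~6.34]{steinbach2008} apply verbatim. Once that regularity lift is in hand, every remaining step is a routine transcription of the Laplace proof with the sesquilinear rather than bilinear bookkeeping.
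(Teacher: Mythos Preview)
Your proposal is correct and follows essentially the same route as the paper: the paper's proof simply states that one mirrors \cref{thm:weaker-norm-laplace-DL} using \cref{thm:intrinsic-norm-helmholtz-DL}, \cref{thm:stronger-norm-helmholtz-DL}, and \cref{lem:approximation-DL}, with the only change being the identification of the dual operator $D^*$ (normal at $\bs{x}$, conjugated exponential $e^{-ik|\bs{x}-\bs{y}|}$), which you describe accurately. One minor slip: under the interpolated normal the subdominant $h^{2\ell-1/2}$ term actually gains \emph{two} powers of $h$ (both the consistency factor and the stronger-norm bound improve), but this does not affect the dominant terms or the final estimate.
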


\begin{proof}
  The proof is similar to that of \cref{thm:weaker-norm-laplace-DL}, and is based on \cref{thm:intrinsic-norm-helmholtz-DL}, \cref{thm:stronger-norm-helmholtz-DL}, and \cref{lem:approximation-DL}. The only difference is that the dual problem reads
  \begin{align*}
    b(p,q) = (p,g) \quad \forall p\in L^2(\Gamma),
  \end{align*}
  with $b(p,q) = (p,q)/2 + (Dp,q) = \overline{(q,p)}/2 + \overline{(D^*q, p)} := b^*(q,p)$ with dual $D^*$ given by
  \begin{align*}
    (D^*q)(\bs{x}) = \frac{1}{4\pi}\int_\Gamma \frac{\partial }{\partial \bs{n}(\bs{x})} \left(\frac{e^{-ik\lvert \bs{x}-\bs{y}\rvert}}{\vert\bs{x}-\bs{y}\vert}\right) q(\bs{y}) d\Gamma(\bs{y}).
  \end{align*}
\end{proof}

%%%%%%%%%%%%%%%%%%%%%%%%%%%%%%%%%%%%%%%%%%%%%%%%%%%%%%%%%%%%%%%%%%%%%%%%%%%%%%%%%%%%%%%%%%%%%%%%%%%%%%%%%%%%%%%%%%%%%%%%%%%%%%%%%%%%%%%%%%%%%%%%%%%%%%
\begin{theorem}[Pointwise evaluation]\label{thm:ptwise-eval-helmholtz-DL}
  Let $p$ and $\hat{p}_h$ denote the solutions to \cref{pb:weak-helmholtz-DL} and \cref{pb:perturbed-helmholtz-DL} for sufficiently small $h$. Then for all $\bs{x}\in\R^3\setminus\overline{\Omega}$
  \begin{align*}
    \vert u(\bs{x}) - u_h(\bs{x})\vert \leq c_{\bs{x}}\left[h^{2m+2}\Vert f\Vert_{m+1} + h^{\ell}\Vert f\Vert_{1}\right],
  \end{align*}
  with $c_{\bs{x}}\to\infty$ as $\bs{x}\to\Gamma$. Using the interpolated normal \(\hat{\bs{\nu}}_h\) improves the geometric error to \(h^{\ell+1}\).
\end{theorem}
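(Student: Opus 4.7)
The plan is to mirror the proof of \cref{thm:ptwise-eval-laplace-DL} and apply it to the Helmholtz kernel, combining a duality-based bound for the density error (via \cref{thm:weaker-norm-helmholtz-DL}) with pointwise geometric estimates (via \cref{lem:geometry}) for the remaining pieces. The essential observation is that, since $\bs{x}\in\R^3\setminus\overline{\Omega}$ lies at positive distance from $\Gamma$, the map $\bs{y}\mapsto(1-ik\vert\bs{x}-\bs{y}\vert)e^{ik\vert\bs{x}-\bs{y}\vert}\vert\bs{x}-\bs{y}\vert^{-3}(\bs{x}-\bs{y})\cdot\bs{n}(\bs{y})$ is smooth on $\Gamma$, with Sobolev norms depending on $\bs{x}$ that diverge as $\bs{x}\to\Gamma$; these will furnish the constant $c_{\bs{x}}$.

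First I would write
\begin{align*}
u(\bs{x}) - u_h(\bs{x}) = \frac{1}{4\pi}\int_\Gamma\Big[K(\bs{x},\bs{y})\,p(\bs{y}) - K_h(\bs{x},\bs{y})\,\hat{p}_h(\bs{y})J^{-1}_h(\bs{y})\Big]\,d\Gamma(\bs{y}),
\end{align*}
where $K$ denotes the exact Helmholtz double-layer kernel and $K_h$ its perturbed counterpart (with $\bs{y}$ replaced by $\Psi^{-1}_h(\bs{y})$ and $\bs{n}$ by $\hat{\bs{n}}_h$). I would then split this into five pieces, in exact analogy with the proof of \cref{thm:ptwise-eval-laplace-DL}: (i) the exact kernel acting on $p-\hat{p}_h$; (ii) the variation of the scalar factor $(1-ik\vert\cdot\vert)e^{ik\vert\cdot\vert}/\vert\cdot\vert^3$ under $\bs{y}\mapsto\Psi^{-1}_h(\bs{y})$; (iii) the vector difference $(\bs{x}-\bs{y})-(\bs{x}-\Psi^{-1}_h(\bs{y}))$; (iv) the normal difference $\bs{n}(\bs{y})-\hat{\bs{n}}_h(\bs{y})$; and (v) the Jacobian factor $1-J^{-1}_h(\bs{y})$.

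Term (i) is bounded via the duality pairing by $c_{\bs{x}}\Vert p-\hat{p}_h\Vert_{-m-1}\leq c_{\bs{x}}[h^{2m+2}\Vert f\Vert_{m+1}+h^\ell\Vert f\Vert_1]$ using \cref{thm:weaker-norm-helmholtz-DL} (with the $h^\ell$ improving to $h^{\ell+1}$ for the interpolated normal), absorbing the $H^{m+1}(\Gamma)$-norm of the smooth kernel into $c_{\bs{x}}$. For terms (ii), (iii) and (v), I would reuse the pointwise bounds developed in the proof of \cref{lem:consistency-helmholtz-DL}---in particular $\vert s(\bs{x},\bs{y})-s_h(\bs{x},\bs{y})\vert\leq c h^{\ell+1}\vert\bs{x}-\bs{y}\vert^{-2}$ and companions from \cref{lem:geometry}---which, since $\vert\bs{x}-\bs{y}\vert\geq\mathrm{dist}(\bs{x},\Gamma)>0$, contribute at most $c_{\bs{x}}h^{\ell+1}\Vert\hat{p}_h\Vert_0$. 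For the normal term (iv) the same argument yields $c_{\bs{x}}h^\ell\Vert\hat{p}_h\Vert_0$ with the curved-element normal, upgraded to $c_{\bs{x}}h^{\ell+1}\Vert\hat{p}_h\Vert_0$ when $\hat{\bs{\nu}}_h$ is used. Closing the estimate requires
\begin{align*}
\Vert\hat{p}_h\Vert_0 \leq \Vert p\Vert_0 + \Vert p-\hat{p}_h\Vert_0 \leq c\big[\Vert f\Vert_{m+1} + h^{m+1}\Vert f\Vert_{m+1} + h^\ell\Vert f\Vert_1\big]
\end{align*}
via \cref{thm:intrinsic-norm-helmholtz-DL}, after which everything is bookkeeping. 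The only non-routine point is verifying that the additional $(1-ik\vert\cdot\vert)e^{ik\vert\cdot\vert}$ factor does not degrade the geometric estimates for term (ii); this is the expected obstacle, but it is resolved exactly as in the four-way splitting of $s-s_h$ carried out inside \cref{lem:consistency-helmholtz-DL}, so no new ingredient is needed.
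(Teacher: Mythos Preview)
Your proposal is correct and follows the same overall strategy as the paper: isolate the density error $p-\hat p_h$ and bound it in a weak norm via \cref{thm:weaker-norm-helmholtz-DL}, then control the remaining geometric perturbations using \cref{lem:geometry} and \cref{thm:intrinsic-norm-helmholtz-DL}. The organizational choice differs slightly. The paper first splits the factor $(1-ik|\bs{x}-\bs{y}|)$ additively, writing $u=v+w$ and $u_h=v_h+w_h$, and then for each piece factors out a smooth scalar (namely $e^{ik|\bs{x}-\bs{y}|}$, respectively $|\bs{x}-\bs{y}|e^{ik|\bs{x}-\bs{y}|}$) so that the bracketed difference is \emph{literally} the Laplace double-layer integrand from \cref{thm:ptwise-eval-laplace-DL}; this lets the paper invoke that earlier proof wholesale, plus one extra exponential-difference term bounded by $c_{\bs{x}}h^{\ell+1}\|\hat p_h\|_0$. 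Your five-way split on the full Helmholtz kernel is more self-contained and avoids the $v+w$ detour, at the modest cost of redoing (rather than quoting) the geometric estimates; since $\bs{x}$ is off the surface, your term (ii) is in fact easier than the on-surface $s-s_h$ bound you cite, as a direct Lipschitz argument with $|\bs{y}-\Psi_h^{-1}(\bs{y})|\le ch^{\ell+1}$ already suffices. Either route yields the stated bound.
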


\begin{proof}
  Let $\bs{x}\in\R^3\setminus\overline{\Omega}$. We write
  \begin{align*}
    u(\bs{x}) = \frac{1}{4\pi}\int_\Gamma(1-ik \vert\bs{x}-\bs{y}\vert)\frac{e^{ik \vert\bs{x}-\bs{y}\vert}}{\vert\bs{x}-\bs{y}\vert^3}(\bs{x}-\bs{y})\cdot\bs{n}(\bs{y})p(\bs{y})d\Gamma(\bs{y}),
  \end{align*}
  and
  \begin{align*}
    u_h(\bs{x}) = \frac{1}{4\pi}\int_\Gamma(1-ik \vert\bs{x}-\Psi^{-1}_h(\bs{y})\vert)\frac{e^{ik \vert\bs{x}-\Psi^{-1}_h(\bs{y})\vert}}{\vert\bs{x}-\Psi^{-1}_h(\bs{y})\vert^3}\hat{p}_h(\bs{y})(\bs{x}-\Psi^{-1}_h(\bs{y}))\cdot\hat{\bs{n}}_h(\bs{y}) J^{-1}_h(\bs{y}) d\Gamma(\bs{y}).
  \end{align*}
  Let
  \begin{align*}
     & v(\bs{x}) = \frac{1}{4\pi}\int_\Gamma \frac{e^{ik \vert\bs{x}-\bs{y}\vert}}{\vert\bs{x}-\bs{y}\vert^3}(\bs{x}-\bs{y})\cdot\bs{n}(\bs{y})p(\bs{y})d\Gamma(\bs{y}),                                                                                \\
     & v_h(\bs{x}) = \frac{1}{4\pi}\int_\Gamma  \frac{e^{ik \vert\bs{x}-\Psi^{-1}_h(\bs{y})\vert}}{\vert\bs{x}-\Psi^{-1}_h(\bs{y})\vert^3}\hat{p}_h(\bs{y})(\bs{x}-\Psi^{-1}_h(\bs{y}))\cdot\hat{\bs{n}}_h(\bs{y}) J^{-1}_h(\bs{y}) d\Gamma(\bs{y}),    \\
     & w(\bs{x}) = -\frac{ik}{4\pi}\int_\Gamma \frac{e^{ik \vert\bs{x}-\bs{y}\vert}}{\vert\bs{x}-\bs{y}\vert^2}(\bs{x}-\bs{y})\cdot\bs{n}(\bs{y})p(\bs{y})d\Gamma(\bs{y}),                                                                              \\
     & w_h(\bs{x}) = -\frac{ik}{4\pi}\int_\Gamma   \frac{e^{ik \vert\bs{x}-\Psi^{-1}_h(\bs{y})\vert}}{\vert\bs{x}-\Psi^{-1}_h(\bs{y})\vert^2}\hat{p}_h(\bs{y})(\bs{x}-\Psi^{-1}_h(\bs{y}))\cdot\hat{\bs{n}}_h(\bs{y}) J^{-1}_h(\bs{y}) d\Gamma(\bs{y}),
  \end{align*}
  so that \(u(\bs{x})=v(\bs{x})+w(\bs{x})\) and \(u_h(\bs{x})=v_h(\bs{x})+w_h(\bs{x})\). We split the first difference,
  \begin{align*}
      & \; v(\bs{x}) - v_h(\bs{x})                                                                                                                                                                                                                                                                                         \\
    = & \; \frac{1}{4\pi}\int_\Gamma e^{ik \lvert \bs{x}-\bs{y}\rvert}\left( \frac{(\bs{x}-\bs{y})\cdot\bs{n}(\bs{y})p(\bs{y})}{\vert\bs{x}-\bs{y}\vert^3}-\frac{(\bs{x}-\Psi^{-1}_h(\bs{y}))\cdot\hat{\bs{n}}_h(\bs{y})\hat{p}_h(\bs{y})J^{-1}_h(\bs{y})}{\vert\bs{x}-\Psi^{-1}_h(\bs{y})\vert^3} \right) d\Gamma(\bs{y}) \\
      & \; + \frac{1}{4\pi}\int_\Gamma \frac{(\bs{x}-\Psi^{-1}_h(\bs{y}))\cdot\hat{\bs{n}}_h(\bs{y})\hat{p}_h(\bs{y})J^{-1}_h(\bs{y})}{\vert\bs{x}-\Psi^{-1}_h(\bs{y})\vert^3}\left(e^{ik \lvert \bs{x}-\bs{y}\rvert} - e^{ik \lvert \bs{x}-\Psi^{-1}_h(\bs{y})\rvert}\right)d\Gamma(\bs{y}).
  \end{align*}
  We bound the first term by $c_{\bs{x}}\Vert p-\hat{p}_h\Vert_{-m-1}$, noting that the difference is exactly the one for the Laplace double-layer problem (\cref{pb:laplace-DL}), and the second term by $c_{\bs{x}}h^{\ell+1}\Vert\hat{p}_h\Vert_0$ with
  \begin{align*}
    \Vert\hat{p}_h\Vert_{0} \leq \Vert p\Vert_{0}  + \Vert p - \hat{p}_h\Vert_{0} \leq c\left[\Vert p\Vert_{0} + h^{m+1}\Vert f\Vert_{m+1} + h^{\ell}\Vert f\Vert_{1}\right],
  \end{align*}
  and $c_{\bs{x}}\to\infty$ as $\bs{x}\to\Gamma$. Similarly, we split the second difference,
  \begin{align*}
    \hspace{-0.05cm}   & w(\bs{x}) - w_h(\bs{x})                                                                                                                                                                                                                                                                                                                                    \\
    \hspace{-0.05cm} = & - \frac{ik}{4\pi}\int_\Gamma \lvert \bs{x} - \bs{y}\rvert e^{ik \lvert \bs{x}-\bs{y}\rvert}\left( \frac{(\bs{x}-\bs{y})\cdot\bs{n}(\bs{y})p(\bs{y})}{\vert\bs{x}-\bs{y}\vert^3}-\frac{(\bs{x}-\Psi^{-1}_h(\bs{y}))\cdot\hat{\bs{n}}_h(\bs{y})\hat{p}_h(\bs{y})J^{-1}_h(\bs{y})}{\vert\bs{x}-\Psi^{-1}_h(\bs{y})\vert^3} \right) d\Gamma(\bs{y})            \\
    \hspace{-0.05cm}   & - \frac{ik}{4\pi}\int_\Gamma \frac{(\bs{x}-\Psi^{-1}_h(\bs{y}))\cdot\hat{\bs{n}}_h(\bs{y})\hat{p}_h(\bs{y})J^{-1}_h(\bs{y})}{\vert\bs{x}-\Psi^{-1}_h(\bs{y})\vert^3}\left(\lvert \bs{x} - \bs{y}\rvert e^{ik \lvert \bs{x}-\bs{y}\rvert} - \lvert \bs{x} - \Psi^{-1}_h(\bs{y})\rvert e^{ik \lvert \bs{x}-\Psi^{-1}_h(\bs{y})\rvert}\right)d\Gamma(\bs{y}).
  \end{align*}
  Both terms can be bounded as before. With \(\hat{\bs{\nu}}_h\), the \( h^\ell \) terms improve to \( h^{\ell+1} \).
\end{proof}

%%%%%%%%%%%%%%%%%%%%%%%%%%%%%%%%%%%%%%%%%%%%%%%%%%%%%%%%%%%%%%%%%%%%%%%%%%%%%%%%%%%%%%%%%%%%%%%%%%%%%%%%%%%%%%%%%%%%%%%%%%%%%%%%%%%%%%%%%%%%%%%%%%%%%%%
\subsection{CFIE}

We consider the following weak formulation of \cref{pb:helmholtz-CFIE}.
\begin{problem}[Weak formulation]\label{pb:weak-helmholtz-CFIE}
Find $p\in L^{2}(\Gamma)$ such that
\begin{align*}
  b(p,q) = (f, q) \quad \forall q\in L^{2}(\Gamma),
\end{align*}
with $b:L^{2}(\Gamma)\times L^{2}(\Gamma)\to\C$ defined by the formula $b(p,q) = \frac{1}{2}(p,q) + (Dp,q) - i \eta (Sp,q)$. We assume \( f \in H^{m+1}(\Gamma) \), so that \( p \in H^{m+1}(\Gamma) \), where \( m \geq 0 \) is the polynomial degree in \( V_h \).
\end{problem}

Using \(D_0\) and \(r\) as in \cref{pb:weak-helmholtz-DL}, and noting that \(S:L^{2}(\Gamma)\to L^{2}(\Gamma)\) is compact, \(b\), which is injective, can be rewritten as \(b(p,q)=a(p,q)+t(p,q)\) with
\begin{align*}
  a(p,q) =\frac{1}{2}(p,q) \quad \text{(coercive)}, \qquad t(p,q) = r(p,q) + (D_0p,q)-i\eta (Sp,q) \quad \text{(compact)}.
\end{align*}
Hence, there is a unique solution to \cref{pb:weak-helmholtz-CFIE} via Fredholm's alternative (\cref{thm:fredholm}). The approximation space $\hat{V}_h$ is same as for \cref{pb:perturbed-helmholtz-DL}.

%%%%%%%%%%%%%%%%%%%%%%%%%%%%%%%%%%%%%%%%%%%%%%%%%%%%%%%%%%%%%%%%%%%%%%%%%%%%%%%%%%%%%%%%%%%%%%%%%%%%%%%%%%%%%%%%%%%%%%%%%%%%%%%%%%%%%%%%%%%%%%%%%%%%%%
\begin{problem}[Perturbed Galerkin approximation problem]\label{pb:perturbed-helmholtz-CFIE}
Find $\hat{p}_h\in\hat{V}_h$ such that
\begin{align*}
  \hat{b}_h(\hat{p}_h,\hat{q}_h) = (\hat{f}_h J^{-1}_h, \hat{q}_h) \quad \forall \hat{q}_h\in \hat{V}_h,
\end{align*}
with $\hat{b}_h:L^{2}(\Gamma)\times L^{2}(\Gamma)\to\C$ defined as a linear combination of the sesquilinear forms from \cref{pb:perturbed-helmholtz-SL} and \cref{pb:perturbed-helmholtz-DL}.
\end{problem}

%%%%%%%%%%%%%%%%%%%%%%%%%%%%%%%%%%%%%%%%%%%%%%%%%%%%%%%%%%%%%%%%%%%%%%%%%%%%%%%%%%%%%%%%%%%%%%%%%%%%%%%%%%%%%%%%%%%%%%%%%%%%%%%%%%%%%%%%%%%%%%%%%%%%%%
\begin{lemma}[Consistency]\label{lem:consistency-helmholtz-CFIE}
  There exists \(h_0>0\) such that for all \(h\leq h_0\), the sesquilinear forms defined in \cref{pb:weak-helmholtz-CFIE} and \cref{pb:perturbed-helmholtz-CFIE} satisfy the consistency conditions
  \begin{align*}
    \vert b(\hat{p}_h,\hat{q}_h) - \hat{b}_h(\hat{p}_h,\hat{q}_h)\vert \leq c_\epsilon h^{\ell} \Vert\hat{p}_h\Vert_{\epsilon}\Vert\hat{q}_h\Vert_{0} \quad \forall \epsilon\in(0,1), \; \forall\hat{p}_h,\hat{q}_h\in\hat{V}_h,
  \end{align*}
  with $c_\epsilon\to\infty$ as $\epsilon\to0$. Using the interpolated normal \(\hat{\bs{\nu}}_h\) improves the geometric error to \(h^{\ell+1}\).
\end{lemma}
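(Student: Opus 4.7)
The plan is to exploit the linearity of the CFIE sesquilinear form: since $b = \frac{1}{2}(\cdot,\cdot) + (D\cdot,\cdot) - i\eta(S\cdot,\cdot)$ splits into precisely the Helmholtz double-layer form (including the mass term) and $-i\eta$ times the Helmholtz single-layer form, and since the perturbed form $\hat{b}_h$ in \cref{pb:perturbed-helmholtz-CFIE} is constructed as the same linear combination of the perturbed forms from \cref{pb:perturbed-helmholtz-DL,pb:perturbed-helmholtz-SL}, the consistency error for the CFIE decomposes additively into the consistency errors already established for these two problems.

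Concretely, I would first write
\begin{align*}
  b(\hat{p}_h,\hat{q}_h) - \hat{b}_h(\hat{p}_h,\hat{q}_h) = \bigl[b_{\mathrm{DL}}(\hat{p}_h,\hat{q}_h) - \hat{b}_{h,\mathrm{DL}}(\hat{p}_h,\hat{q}_h)\bigr] - i\eta\bigl[b_{\mathrm{SL}}(\hat{p}_h,\hat{q}_h) - \hat{b}_{h,\mathrm{SL}}(\hat{p}_h,\hat{q}_h)\bigr],
\end{align*}
where $b_{\mathrm{DL}}$, $\hat{b}_{h,\mathrm{DL}}$, $b_{\mathrm{SL}}$, $\hat{b}_{h,\mathrm{SL}}$ denote the Helmholtz double- and single-layer (perturbed) forms of \cref{pb:weak-helmholtz-DL,pb:perturbed-helmholtz-DL,pb:weak-helmholtz-SL,pb:perturbed-helmholtz-SL}. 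The triangle inequality then reduces the task to bounding each bracket separately.

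For the double-layer bracket, I would invoke \cref{lem:consistency-helmholtz-DL} to obtain $c_\epsilon h^\ell \Vert\hat{p}_h\Vert_\epsilon \Vert\hat{q}_h\Vert_0$ (and $h^{\ell+1}$ when $\hat{\bs{\nu}}_h$ is used). For the single-layer bracket, \cref{lem:consistency-helmholtz-SL} gives $c h^{\ell+1} \Vert\hat{p}_h\Vert_0 \Vert\hat{q}_h\Vert_0$, and since $\Vert\hat{p}_h\Vert_0 \leq \Vert\hat{p}_h\Vert_\epsilon$ for any $\epsilon>0$, this contribution is strictly dominated (in $h$) by the double-layer one. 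Summing with the constant $|\eta|$ absorbed into $c_\epsilon$ yields the stated bound.

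I do not anticipate any real obstacle: the CFIE form was specifically constructed as a linear combination, no new singular kernel appears, and the two cases on which we rely have already been handled carefully in the preceding lemmas. The only subtlety is to remark that the interpolated-normal improvement propagates exclusively through the double-layer part (the single-layer part involves no normals and always enjoys the stronger $h^{\ell+1}$ rate), so the improved estimate with $\hat{\bs{\nu}}_h$ is an immediate corollary of the same decomposition.
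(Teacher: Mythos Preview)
Your proposal is correct and matches the paper's own proof, which consists of the single sentence ``We combine \cref{lem:consistency-helmholtz-SL} with \cref{lem:consistency-helmholtz-DL}.'' You have simply spelled out the decomposition and the absorption of the single-layer contribution into the dominant double-layer term, which is exactly the intended argument.
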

\begin{proof}
  We combine \cref{lem:consistency-helmholtz-SL} with \cref{lem:consistency-helmholtz-DL}.
\end{proof}

%%%%%%%%%%%%%%%%%%%%%%%%%%%%%%%%%%%%%%%%%%%%%%%%%%%%%%%%%%%%%%%%%%%%%%%%%%%%%%%%%%%%%%%%%%%%%%%%%%%%%%%%%%%%%%%%%%%%%%%%%%%%%%%%%%%%%%%%%%%%%%%%%%%%%%
\begin{theorem}[Intrinsic norm]\label{thm:intrinsic-norm-helmholtz-CFIE}
  Let $p$ and $\hat{p}_h$ denote the solutions to \cref{pb:weak-helmholtz-CFIE} and \cref{pb:perturbed-helmholtz-CFIE} for sufficiently small $h$. Then
  \begin{align*}
    \Vert p - \hat{p}_h\Vert_{0} \leq c\big[h^{m+1}\Vert f\Vert_{m+1} + h^{\ell}\Vert f\Vert_{1}\big].
  \end{align*}
  Using the interpolated normal \(\hat{\bs{\nu}}_h\) improves the geometric error to \(h^{\ell+1}\).
\end{theorem}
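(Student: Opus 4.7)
The plan is to follow essentially the same template as the proof of \cref{thm:intrinsic-norm-helmholtz-DL}, which itself mirrors \cref{thm:intrinsic-norm-laplace-DL}, since the CFIE is only a linear combination of the single- and double-layer operators. First, using \cref{lem:consistency-helmholtz-CFIE} together with an inverse Sobolev inequality, I would upgrade the consistency estimate into an $L^2(\Gamma)\times L^2(\Gamma)$ bound with factor $h^{\ell-\epsilon}$ (or $h^{\ell+1-\epsilon}$ with the interpolated normal). Combined with the discrete inf-sup conditions for $b$ (obtained from coercivity, compactness, and the Fredholm structure recalled before the statement), \cref{rem:uniform-infsup} then yields uniform discrete inf-sup conditions for $\hat{b}_h$ on $L^2(\Gamma)\times L^2(\Gamma)$ for $h$ small enough, which is exactly what is needed to apply \cref{lem:strang-B}.

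Next, I would apply \cref{lem:strang-B} with the choice $v_h = \hat{s}_h p$, where $\hat{s}_h$ is the $L^2(\Gamma)$-orthogonal projector onto $\hat{V}_h$. This produces three terms to bound. The right-hand side consistency term $|(f,\hat{q}_h) - (\hat{f}_h J_h^{-1},\hat{q}_h)|$ is handled exactly as in \cref{thm:intrinsic-norm-laplace-SL}: using the projection identity \cref{eq:projection} to eliminate the $L^2(\Gamma_h)$-projected part and then \cref{lem:geometry} to control the remaining $J_h$-defect, yielding the bound $c h^{\ell+1}\|f\|_1\|\hat{q}_h\|_0$. The Galerkin approximation term $\|p - \hat{s}_h p\|_0$ is bounded by $c h^{m+1}\|p\|_{m+1} \leq c h^{m+1}\|f\|_{m+1}$ via \cref{lem:approximation-DL} together with the continuity $\|p\|_{m+1}\leq c\|f\|_{m+1}$ coming from the mapping properties of $(I/2+D-i\eta S)^{-1}$.

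For the third, sesquilinear form consistency term, I would apply \cref{lem:consistency-helmholtz-CFIE} with $\hat{p}_h$ replaced by $\hat{s}_h p$ and some fixed $\epsilon\in(0,1)$ to get the bound $c_\epsilon h^\ell \|\hat{s}_h p\|_\epsilon \|\hat{q}_h\|_0$ (or $c_\epsilon h^{\ell+1}$ with the interpolated normal). The stability $\|\hat{s}_h p\|_\epsilon \leq \|p\|_1 \leq c\|f\|_1$ from \cref{lem:approximation-DL} then collapses this into $c h^\ell \|f\|_1 \|\hat{q}_h\|_0$, and gathering the three terms gives the stated estimate. The interpolated-normal variant follows verbatim by invoking the improved $h^{\ell+1}$ factor of \cref{lem:consistency-helmholtz-CFIE}.

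There is no real obstacle here beyond bookkeeping: the only subtle point is the need to first pass from the $H^\epsilon\times L^2$ consistency of \cref{lem:consistency-helmholtz-CFIE} to a uniform discrete inf-sup condition on $L^2(\Gamma)\times L^2(\Gamma)$ via an inverse inequality (so that \cref{lem:strang-B} is applicable), but this is precisely the maneuver already justified in the Laplace and Helmholtz double-layer sections and requires no new ingredient for the CFIE.
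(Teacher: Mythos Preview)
Your proposal is correct and follows exactly the paper's approach: the paper's proof is the one-line remark that one combines \cref{lem:consistency-helmholtz-CFIE,lem:approximation-DL} with \cref{lem:strang-B} as in the proof of \cref{thm:intrinsic-norm-helmholtz-DL}, and your write-up simply spells out those details.
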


\begin{proof}
  We combine \cref{lem:consistency-helmholtz-CFIE,lem:approximation-DL} with \cref{lem:strang-B}, as in the proof of \cref{thm:intrinsic-norm-helmholtz-DL}.
\end{proof}

%%%%%%%%%%%%%%%%%%%%%%%%%%%%%%%%%%%%%%%%%%%%%%%%%%%%%%%%%%%%%%%%%%%%%%%%%%%%%%%%%%%%%%%%%%%%%%%%%%%%%%%%%%%%%%%%%%%%%%%%%%%%%%%%%%%%%%%%%%%%%%%%%%%%%%
\begin{theorem}[Stronger norm]\label{thm:stronger-norm-helmholtz-CFIE}
  Let $p$ and $\hat{p}_h$ denote the solutions to \cref{pb:weak-helmholtz-CFIE} and \cref{pb:perturbed-helmholtz-CFIE} for sufficiently small $h$. Then
  \begin{align*}
    \Vert p - \hat{p}_h\Vert_{1/2} \leq c\left[h^{m+1/2}\Vert f\Vert_{m+1} + h^{\ell-1/2}\Vert f\Vert_{1}\right].
  \end{align*}
  Using the interpolated normal \(\hat{\bs{\nu}}_h\) improves the geometric error to \(h^{\ell+1/2}\).
\end{theorem}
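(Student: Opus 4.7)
The plan is to follow the same pattern as the proofs of \cref{thm:stronger-norm-laplace-DL} and \cref{thm:stronger-norm-helmholtz-DL}, since the CFIE sesquilinear form lives in the same functional setting ($L^2(\Gamma)\times L^2(\Gamma)$) as the double-layer case, the approximation space $\hat{V}_h$ is identical, and the intrinsic-norm estimate \cref{thm:intrinsic-norm-helmholtz-CFIE} has already been established. First, I would introduce the $L^2(\Gamma)$-orthogonal projector $\hat{s}_h$ onto $\hat{V}_h$ and use the triangle inequality
\begin{align*}
\Vert p - \hat{p}_h\Vert_{1/2} \leq \Vert p - \hat{s}_h p\Vert_{1/2} + \Vert \hat{s}_h p - \hat{p}_h\Vert_{1/2}.
\end{align*}

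For the first term, the approximation estimate in \cref{lem:approximation-DL} gives $\Vert p - \hat{s}_h p\Vert_{1/2}\leq c h^{m+1/2}\Vert p\Vert_{m+1}\leq c h^{m+1/2}\Vert f\Vert_{m+1}$, where the regularity $\Vert p\Vert_{m+1}\leq c\Vert f\Vert_{m+1}$ for the CFIE follows as in the double-layer case. For the second term, since $\hat{s}_h p - \hat{p}_h$ lies in the finite-dimensional subspace $\hat{V}_h$, the inverse Sobolev inequality from the same lemma yields
\begin{align*}
\Vert \hat{s}_h p - \hat{p}_h\Vert_{1/2} \leq c h^{-1/2}\Vert \hat{s}_h p - \hat{p}_h\Vert_0 \leq c h^{-1/2}\left[\Vert\hat{s}_h p - p\Vert_0 + \Vert p - \hat{p}_h\Vert_0\right].
\end{align*}
I would then bound $\Vert\hat{s}_h p - p\Vert_0 \leq c h^{m+1}\Vert p\Vert_{m+1}$ again by \cref{lem:approximation-DL}, and invoke \cref{thm:intrinsic-norm-helmholtz-CFIE} to control $\Vert p - \hat{p}_h\Vert_0 \leq c\bigl[h^{m+1}\Vert f\Vert_{m+1} + h^{\ell}\Vert f\Vert_1\bigr]$ (with $h^{\ell+1}$ when the interpolated normal $\hat{\bs{\nu}}_h$ is used). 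Combining these estimates and absorbing the $h^{-1/2}$ factor yields the claimed $h^{m+1/2}\Vert f\Vert_{m+1} + h^{\ell-1/2}\Vert f\Vert_1$ bound, and the improvement to $h^{\ell+1/2}$ under the interpolated-normal assumption is inherited directly from the corresponding improvement in the intrinsic-norm theorem.

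There is no substantive obstacle: once the intrinsic-norm bound is in hand, the stronger-norm estimate follows mechanically from the triangle inequality, the polynomial approximation rates in $\hat{V}_h$, and the Bernstein-type inverse estimate that trades half a Sobolev derivative for a factor of $h^{-1/2}$. Accordingly, the written proof can be condensed to a one-sentence reference pointing to the argument used for \cref{thm:stronger-norm-laplace-DL} and \cref{thm:stronger-norm-helmholtz-DL}, with \cref{thm:intrinsic-norm-helmholtz-CFIE} and \cref{lem:approximation-DL} cited in place of their single-operator counterparts.
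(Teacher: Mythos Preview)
Your proposal is correct and follows essentially the same approach as the paper, which simply states that the result follows from \cref{thm:intrinsic-norm-helmholtz-CFIE} and \cref{lem:approximation-DL} as in the proof of \cref{thm:stronger-norm-helmholtz-DL}. Your detailed expansion of the triangle-inequality-plus-inverse-estimate argument is exactly what that one-line reference unpacks to.
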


\begin{proof}
  The result follows from \cref{thm:intrinsic-norm-helmholtz-CFIE} and \cref{lem:approximation-DL}, as in the proof of \cref{thm:stronger-norm-helmholtz-DL}.
\end{proof}

%%%%%%%%%%%%%%%%%%%%%%%%%%%%%%%%%%%%%%%%%%%%%%%%%%%%%%%%%%%%%%%%%%%%%%%%%%%%%%%%%%%%%%%%%%%%%%%%%%%%%%%%%%%%%%%%%%%%%%%%%%%%%%%%%%%%%%%%%%%%%%%%%%%%%%
\begin{theorem}[Weaker norms]\label{thm:weaker-norm-helmholtz-CFIE}
  Let $p$ and $\hat{p}_h$ denote the solutions to \cref{pb:weak-helmholtz-CFIE} and \cref{pb:perturbed-helmholtz-CFIE} for sufficiently small $h$. Then
  \begin{align*}
    \Vert p - \hat{p}_h\Vert_{-m-1} \leq c\left[h^{2m+2}\Vert f\Vert_{m+1} + h^{\ell}\Vert f\Vert_{1}\right].
  \end{align*}
  Using the interpolated normal \(\hat{\bs{\nu}}_h\) improves the geometric error to \(h^{\ell+1}\).
\end{theorem}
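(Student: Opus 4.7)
The plan is to mimic the argument used for \cref{thm:weaker-norm-helmholtz-DL} step by step, with the Helmholtz double-layer sesquilinear form replaced by the CFIE one. Starting from the duality representation
\[
  \Vert p - \hat{p}_h\Vert_{-m-1} = \sup_{g\in H^{m+1}(\Gamma)}\frac{\vert (g, p - \hat{p}_h)\vert}{\Vert g\Vert_{m+1}} = \sup_{g\in H^{m+1}(\Gamma)}\frac{\vert b(p - \hat{p}_h, q)\vert}{\Vert g\Vert_{m+1}},
\]
I would introduce the adjoint problem: find $q\in L^2(\Gamma)$ such that $b(p, q) = (p, g)$ for all $p\in L^2(\Gamma)$. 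A short computation shows that the adjoint sesquilinear form is associated with the operator $I/2 + D^* + i\eta S^*$, whose kernels are obtained from those in \cref{eq:helmholtz-SL,eq:helmholtz-DL} by complex conjugation. As the Hilbert-space adjoint of the isomorphism $I/2 + D - i\eta S$, it is itself an isomorphism from $H^s(\Gamma)$ to $H^s(\Gamma)$ for every $s\in\R$, so $g \in H^{m+1}(\Gamma)$ produces $q \in H^{m+1}(\Gamma)$ with $\Vert q\Vert_{m+1} \leq c\Vert g\Vert_{m+1}$.

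Next I would split $b(p - \hat{p}_h, q) = b(p - \hat{p}_h, q - \hat{s}_h q) + b(p - \hat{p}_h, \hat{s}_h q)$. The first piece is bounded by $c\Vert p - \hat{p}_h\Vert_0 \Vert q - \hat{s}_h q\Vert_0$; combining \cref{thm:intrinsic-norm-helmholtz-CFIE} with the $L^2$-projection estimate in \cref{lem:approximation-DL} yields $c\bigl[h^{2m+2}\Vert f\Vert_{m+1} + h^{\ell+m+1}\Vert f\Vert_1\bigr]\Vert q\Vert_{m+1}$, the second exponent improving to $h^{\ell+m+2}$ under the interpolated-normal variant. For the second piece I would use the now standard decomposition
\[
  b(p - \hat{p}_h, \hat{s}_h q) = \bigl[b(p, \hat{s}_h q) - \hat{b}_h(\hat{p}_h, \hat{s}_h q)\bigr] + \bigl[\hat{b}_h(\hat{p}_h, \hat{s}_h q) - b(\hat{p}_h, \hat{s}_h q)\bigr].
\]
The first bracket reduces to $(f - \hat{f}_h J^{-1}_h, \hat{s}_h q)$ and is controlled by $ch^{\ell+1}\Vert f\Vert_1 \Vert q\Vert_{m+1}$ via the projection identity \cref{eq:projection} and \cref{lem:geometry}, exactly as in \cref{thm:intrinsic-norm-laplace-SL,thm:weaker-norm-helmholtz-DL}. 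The second bracket is handled by \cref{lem:consistency-helmholtz-CFIE} with some $\epsilon \in (0,1/2]$, giving $ch^\ell \Vert \hat{p}_h\Vert_\epsilon \Vert \hat{s}_h q\Vert_0$; bounding $\Vert \hat{p}_h\Vert_\epsilon \leq \Vert p\Vert_{1/2} + \Vert p - \hat{p}_h\Vert_{1/2}$ via \cref{thm:stronger-norm-helmholtz-CFIE} contributes terms of orders $h^\ell\Vert f\Vert_1$, $h^{\ell+m+1/2}\Vert f\Vert_{m+1}$, and $h^{2\ell-1/2}\Vert f\Vert_1$, each exponent being increased by one when $\hat{\bs{\nu}}_h$ is used. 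Collecting and keeping the dominant contributions yields the stated bound.

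The main obstacle here is not any individual estimate---every ingredient is already available from the Helmholtz single- and double-layer analyses---but rather the verification that the adjoint CFIE enjoys the full $H^s\to H^s$ isomorphism property on smooth $\Gamma$, so that the dual problem admits the required $H^{m+1}$-regularity; this follows from Fredholm duality applied to \cref{pb:weak-helmholtz-CFIE}, but should be stated explicitly. The remaining effort is bookkeeping: tracking which of the many geometric and approximation contributions dominate in the final bound (e.g.\ $h^{2\ell - 1/2} \leq h^\ell$ for $\ell\geq 1$, and $h^{\ell+m+1}\Vert f\Vert_1 \leq h^\ell \Vert f\Vert_1$ for $m\geq 0$), which is entirely analogous to the bookkeeping already carried out for \cref{thm:weaker-norm-helmholtz-DL}.
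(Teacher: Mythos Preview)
Your proposal is correct and follows essentially the same route as the paper: a duality argument via the adjoint CFIE operator $I/2 + D^* + i\eta S^*$, the splitting $b(p-\hat{p}_h,q) = b(p-\hat{p}_h,q-\hat{s}_hq) + b(p-\hat{p}_h,\hat{s}_hq)$, and the same combination of \cref{thm:intrinsic-norm-helmholtz-CFIE}, \cref{thm:stronger-norm-helmholtz-CFIE}, \cref{lem:consistency-helmholtz-CFIE}, and \cref{lem:approximation-DL}. The paper's proof simply refers back to \cref{thm:weaker-norm-laplace-DL} and records the adjoint form $b^*(q,p) = \overline{(q,p)}/2 + \overline{(D^*q,p)} - i\eta\overline{(S^*q,p)}$, which is exactly the operator you identify.
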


\begin{proof}
  The proof is similar to that of \cref{thm:weaker-norm-laplace-DL}, and is based on \cref{thm:intrinsic-norm-helmholtz-CFIE}, \cref{thm:stronger-norm-helmholtz-CFIE}, and \cref{lem:approximation-DL}. The only difference is that the dual problem reads
  \begin{align*}
    b(p,q) = (p,g) \quad \forall p\in L^2(\Gamma),
  \end{align*}
  with
  \begin{align*}
    b(p,q) = (p,q)/2 + (p,Dq) - i \eta (p,S q) = \overline{(q,p)}/2 + \overline{(D^*q, p)} - i \eta \overline{(S^*q, p)}  := b^*(q,p),
  \end{align*}
  where the dual operators are defined as in the proofs of \cref{thm:weaker-norm-helmholtz-SL} and \cref{thm:weaker-norm-helmholtz-DL}.
\end{proof}

%%%%%%%%%%%%%%%%%%%%%%%%%%%%%%%%%%%%%%%%%%%%%%%%%%%%%%%%%%%%%%%%%%%%%%%%%%%%%%%%%%%%%%%%%%%%%%%%%%%%%%%%%%%%%%%%%%%%%%%%%%%%%%%%%%%%%%%%%%%%%%%%%%%%%%
\begin{theorem}[Pointwise evaluation]\label{thm:ptwise-eval-helmholtz-CFIE}
  Let $p$ and $\hat{p}_h$ denote the solutions to \cref{pb:weak-helmholtz-CFIE} and \cref{pb:perturbed-helmholtz-CFIE} for sufficiently small $h$. Then for all $\bs{x}\in\R^3\setminus\overline{\Omega}$
  \begin{align*}
    \vert u(\bs{x}) - u_h(\bs{x})\vert \leq c_{\bs{x}}\left[h^{2m+2}\Vert f\Vert_{m+1} + h^{\ell}\Vert f\Vert_{1}\right],
  \end{align*}
  with $c_{\bs{x}}\to\infty$ as $\bs{x}\to\Gamma$. Using the interpolated normal \(\hat{\bs{\nu}}_h\) improves the geometric error to \(h^{\ell+1}\).
\end{theorem}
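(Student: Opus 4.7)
The plan is to exploit linearity of the CFIE representation $u = \mathcal{D}p - i\eta\mathcal{S}p$ and reduce the analysis to a term-by-term combination of the Helmholtz single-layer and double-layer pointwise analyses that have already been carried out. Concretely, for $\bs{x}\in\R^3\setminus\overline{\Omega}$, I would write
\begin{align*}
  u(\bs{x}) - u_h(\bs{x}) = \bigl[u^D(\bs{x}) - u^D_h(\bs{x})\bigr] - i\eta\bigl[u^S(\bs{x}) - u^S_h(\bs{x})\bigr],
\end{align*}
where $u^D$, $u^D_h$ are the double-layer integrals of $p$, $\hat{p}_h$ (with $\bs{n}$, $\hat{\bs{n}}_h$ or $\hat{\bs{\nu}}_h$), and $u^S$, $u^S_h$ are the single-layer integrals of the same densities. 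The critical observation is that although $p$ and $\hat{p}_h$ now solve the CFIE and not the pure single- or double-layer equations, the geometric splittings used in the proofs of \cref{thm:ptwise-eval-helmholtz-SL,thm:ptwise-eval-helmholtz-DL} are purely algebraic in the integrands and do not rely on which equation $p$, $\hat{p}_h$ solve.

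Next I would apply those splittings verbatim. For the double-layer contribution, I reuse the decomposition from the proof of \cref{thm:ptwise-eval-helmholtz-DL}, with the two pieces $v(\bs{x})-v_h(\bs{x})$ and $w(\bs{x})-w_h(\bs{x})$, each split further into a density-difference term and geometric-perturbation terms. For the single-layer contribution, I reuse the three-term splitting from the proof of \cref{thm:ptwise-eval-helmholtz-SL}. In both cases, the density-difference terms are controlled by $c_{\bs{x}}\Vert p-\hat{p}_h\Vert_{-m-1}$ (this is where the CFIE-specific input enters: I invoke \cref{thm:weaker-norm-helmholtz-CFIE} rather than the corresponding pure-potential results), bounding integrals against the $H^{m+1}(\Gamma)$-regular kernels $\vert\bs{x}-\cdot\vert^{-1}e^{ik\vert\bs{x}-\cdot\vert}$ and their normal derivatives evaluated for fixed $\bs{x}$ off $\Gamma$. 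The geometric perturbation terms are bounded by $c_{\bs{x}}h^{\ell+1}\Vert\hat{p}_h\Vert_{0}$ (or by $c_{\bs{x}}h^{\ell}\Vert\hat{p}_h\Vert_{0}$ for the one normal-vector term when the element normal is used, improving to $c_{\bs{x}}h^{\ell+1}\Vert\hat{p}_h\Vert_0$ with the interpolated normal) via \cref{lem:geometry}.

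Finally, to absorb the $\Vert\hat{p}_h\Vert_0$ factor I use the triangle inequality together with \cref{thm:intrinsic-norm-helmholtz-CFIE},
\begin{align*}
  \Vert\hat{p}_h\Vert_0 \leq \Vert p\Vert_0 + \Vert p-\hat{p}_h\Vert_0 \leq c\bigl[\Vert p\Vert_0 + h^{m+1}\Vert f\Vert_{m+1} + h^{\ell}\Vert f\Vert_1\bigr],
\end{align*}
and $\Vert p\Vert_0\leq c\Vert f\Vert_1$. Combining the contributions yields the geometric error $h^\ell$ with the element normal and $h^{\ell+1}$ with the interpolated normal, and the $h^{2m+2}$ density error from \cref{thm:weaker-norm-helmholtz-CFIE}.

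I do not anticipate a genuine obstacle: every difficulty has already been resolved in the four preceding pointwise proofs. The only point that deserves care is making sure that the density term from the double-layer splitting is indeed recognized as the exact Laplace-style integrand (times $e^{ik\vert\bs{x}-\bs{y}\vert}$) that allows the $\Vert p-\hat{p}_h\Vert_{-m-1}$ duality bound—rather than having a $1/r^2$ singularity—just as in the proof of \cref{thm:ptwise-eval-helmholtz-DL}; once the $w$-term is split off using its extra factor of $\vert\bs{x}-\bs{y}\vert$ relative to $v$, this reduction is immediate.
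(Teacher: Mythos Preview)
Your proposal is correct and follows essentially the same approach as the paper: split $u-u_h$ into the double-layer and single-layer contributions, reuse the algebraic decompositions from the proofs of \cref{thm:ptwise-eval-helmholtz-DL} and \cref{thm:ptwise-eval-helmholtz-SL}, control the density-difference terms by $c_{\bs{x}}\Vert p-\hat{p}_h\Vert_{-m-1}$ via \cref{thm:weaker-norm-helmholtz-CFIE}, and control the geometric terms by $c_{\bs{x}}h^{\ell}\Vert\hat{p}_h\Vert_0$ (or $h^{\ell+1}$ with the interpolated normal) using \cref{lem:geometry} and \cref{thm:intrinsic-norm-helmholtz-CFIE}. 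The paper's proof is only a slight compression of what you wrote.
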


\begin{proof}
  We write \(u(\bs{x})=v(\bs{x})-i\eta w(\bs{x})\) and \(u_h(\bs{x})=v_h(\bs{x})-i\eta  w_h(\bs{x})\) with
  \begin{align*}
     & v(\bs{x}) = \frac{1}{4\pi}\int_\Gamma(1-ik \vert\bs{x}-\bs{y}\vert)\frac{e^{ik \vert\bs{x}-\bs{y}\vert}}{\vert\bs{x}-\bs{y}\vert^3}(\bs{x}-\bs{y})\cdot\bs{n}(\bs{y})p(\bs{y})d\Gamma(\bs{y}),                                                                                         \\
     & v_h(\bs{x}) = \frac{1}{4\pi}\int_\Gamma(1-ik \vert\bs{x}-\Psi^{-1}_h(\bs{y})\vert)\frac{e^{ik \vert\bs{x}-\Psi^{-1}_h(\bs{y})\vert}}{\vert\bs{x}-\Psi^{-1}_h(\bs{y})\vert^3}\hat{p}_h(\bs{y})(\bs{x}-\Psi^{-1}_h(\bs{y}))\cdot\hat{\bs{n}}_h(\bs{y}) J^{-1}_h(\bs{y}) d\Gamma(\bs{y}), \\
     & w(\bs{x}) =  \frac{1}{4\pi}\int_\Gamma \frac{e^{ik \vert\bs{x}-\bs{y}\vert}}{\vert\bs{x}-\bs{y}\vert}p(\bs{y})d\Gamma(\bs{y}),                                                                                                                                                  \\
     & w_h(\bs{x}) = \frac{1 }{4\pi}\int_\Gamma \frac{e^{ik\vert\bs{x}-\Psi^{-1}_h(\bs{y})\vert}}{\vert\bs{x}-\Psi^{-1}_h(\bs{y})\vert}\hat{p}_h(\bs{y})J^{-1}_h(\bs{y})d\Gamma(\bs{y}).
  \end{align*}
  The difference \(v(\bs{x})-v_h(\bs{x})\) can be bounded with Theorem~\ref{thm:ptwise-eval-helmholtz-DL}. The difference \(w(\bs{x})-w_h(\bs{x})\) is similar to that in the proof of \cref{thm:ptwise-eval-helmholtz-SL}. More precisely, we split it into
  \begin{align*}
    w(\bs{x}) - w_h(\bs{x}) = & \frac{1}{4\pi}\int_\Gamma\left[\frac{e^{ik\vert\bs{x}-\bs{y}\vert}}{\vert\bs{x}-\bs{y}\vert}p(\bs{y})-\frac{e^{ik\vert\bs{x}-\bs{y}\vert}}{\vert\bs{x}-\bs{y}\vert}\hat{p}_h(\bs{y})\right]d\Gamma(\bs{y})                      \\
                              & + \frac{1}{4\pi}\int_\Gamma\hat{p}_h(\bs{y})\left[\frac{e^{ik\vert\bs{x}-\bs{y}\vert}}{\vert\bs{x}-\bs{y}\vert}-\frac{e^{ik\vert\bs{x} -\Psi^{-1}_h(\bs{y})\vert}}{\vert\bs{x} -\Psi^{-1}_h(\bs{y})\vert}\right]d\Gamma(\bs{y}) \\
                              & + \frac{1}{4\pi}\int_\Gamma\hat{p}_h(\bs{y})\frac{e^{ik\vert\bs{x}-\Psi^{-1}_h(\bs{y})\vert}}{\vert\bs{x} -\Psi^{-1}_h(\bs{y})\vert}(1 - J^{-1}_h(\bs{y}))d\Gamma(\bs{y}).
  \end{align*}
  We bound the first term by $c_{\bs{x}}\Vert p-\hat{p}_h\Vert_{-m-1}$ and the rest by $c_{\bs{x}}h^{\ell+1}\Vert\hat{p}_h\Vert_{0}$, with $c_{\bs{x}}\to\infty$ as $\bs{x}\to\Gamma$.
\end{proof}

%%%%%%%%%%%%%%%%%%%%%%%%%%%%%%%%%%%%%%%%%%%%%%%%%%%%%%%%%%%%%%%%%%%%%%%%%%%%%%%%%%%%%%%%%%%%%%%%%%%%%%%%%%%%%%%%%%%%%%%%%%%%%%%%%%%%%%%%%%%%%%%%%%%%%%
\section{Numerical experiments}\label{sec:experiments}

We now present numerical experiments for the Helmholtz equation with Dirichlet boundary conditions. The incident field is the plane wave \( u_{\text{inc}}(\bs{x}) = e^{ik \bs{x} \cdot \bs{d}} \) with \( \bs{d} = (1,0,0) \) and \( k = 2\pi \). The total field \( u = u_{\text{inc}} + u_{\text{scat}} \) is such that \( u_{\text{scat}} = -u_{\text{inc}} \) on \( \Gamma \). We solve for \( u_{\text{scat}} \) using the single-layer and CFIE formulations with \( \eta = k \).

We test polynomial basis functions of degree \( m \in \{0,1,2,3\} \) on curved triangular meshes of
degree \( \ell \in \{1,2,3,4\} \). For the CFIE, we use the mesh normal \( \bs{n}_h \) rather than the interpolated normal \( \bs{\nu}_h \). This choice reflects a common practice, as it avoids the need to reference an underlying CAD model for the exact normal vectors at the mesh nodes. Accuracy is measured by the pointwise
relative error at \( \bs{r} = (1, 2, 3) \),
\[
e_h(\bs{r}) = \frac{|u_h(\bs{r}) - u_{\text{ref}}(\bs{r})|}{|u_{\text{ref}}(\bs{r})|},
\]
where \( u_{\text{ref}} \) is an analytical or reference solution computed on a highly refined mesh.

The singular and nearly-singular integrals in the operator matrices are handled using a
regularization technique based on the density interpolation
method~\cite{faria2021,perezarancibia2019}, adapted to a Galerkin formulation
(similar to \cite{perezarancibia2020}). For computational
efficiency with large systems (up to $10^6$ unknowns), we use classical
$\mathcal{H}$-matrix compression~\cite{hackbusch2015} with a relative tolerance of
$10^{-10}$. The resulting linear systems are solved with GMRES, preconditioned by the
Cholesky factorization of the mass matrix, with a solver tolerance of $10^{-10}$. The
CFIE formulation consistently required a modest, mesh-independent number of iterations ($\approx 30$), while the single-layer formulation needed more iterations ($\approx 300$) on finer meshes, reflecting its less favorable spectral properties; see~\cite[Sec. 4.5]{sauter2011} for details.

We present results for two geometries shown in \cref{fig:geometries}: (i) the unit sphere, enabling comparison with an analytical solution (see, e.g., \cite[eq.~(3.37)]{colton2019}); and (ii) a bean-shaped object, for which we conduct a self-convergence study.

\begin{figure}
  \centering
  \includegraphics[width=0.45\textwidth]{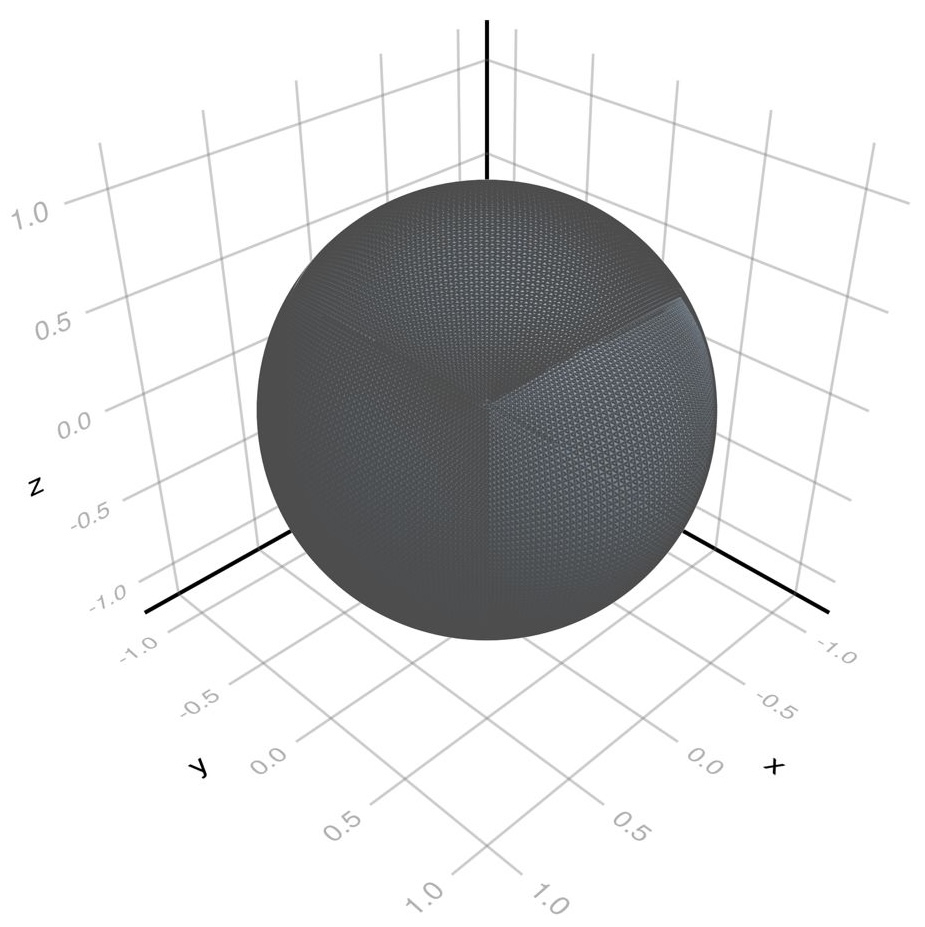}
  \includegraphics[width=0.45\textwidth]{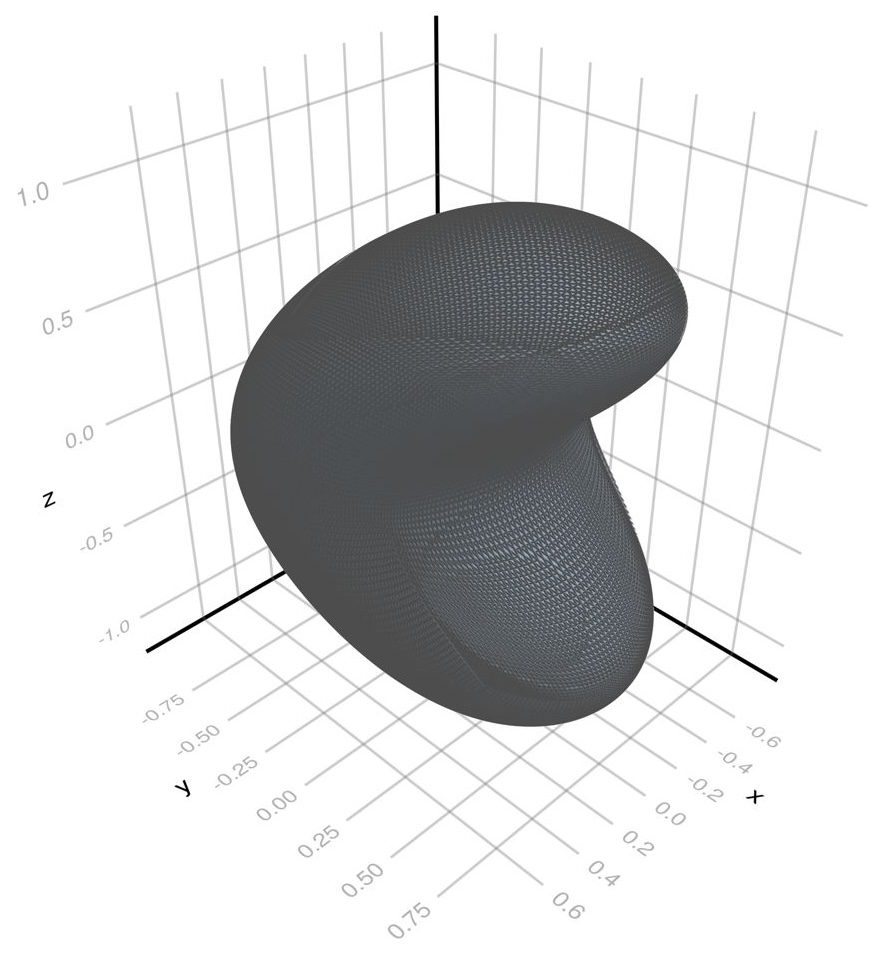}
  \caption{Representative meshes for the sphere and bean-shaped geometries used in the numerical examples. The reference solution for the sphere is obtained via separation of variables, while the reference solution for the bean is computed on a fine mesh.}
  \label{fig:geometries}
\end{figure}

%%%%%%%%%%%%%%%%%%%%%%%%%%%%%%%%%%%%%%%%%%%%%%%%%%%%%%%%%%%%%%%%%%%%%%%%%%%%%%%%%%%%%%%%%%%%%%%%%%%%%%%%%%%%%%%%%%%%%%%%%%%%%%%%%%%%%%%%%%%%%%%%%%%%%%
\subsection{Sound-soft sphere}

Our first test case is the sound-soft sphere, where an exact solution is available via
separation of variables. The pointwise errors for a representative set of $\ell$ and $m$
are shown in
\cref{fig:soundsoft-sphere}. In all cases, the observed convergence rates are consistent with the upper bounds
provided by the theory, although we do observe superconvergence of geometrical errors.

For the single-layer formulation, \cref{thm:ptwise-eval-helmholtz-SL} predicts a
convergence rate of $\min(2m+3, \ell+1)$. Such a prediction exactly matches our
numerical results, except for the case of $\ell=2$ and $\ell = 4$, where the geometric
errors appear to converge at the faster rate of $h^{\ell+2}$. The approximation error rate of \(h^{2m+3}\), however, appears to be sharp, as observed for \(m=0\) and \(\ell=2\).

For the CFIE formulation, \cref{thm:ptwise-eval-helmholtz-CFIE} predicts a rate of
$\min(2m+2, \ell)$ when the normal is not interpolated (i.e., using \(\bs{n}_h\), the normal to
$\Gamma_h$). We observe, however, geometric errors that converge at the same rate as the
single-layer formulation; that is, at the rate of $h^{\ell+1}$ for odd $\ell$ and
$h^{\ell + 2}$ for even $\ell$. The approximation error rate of \(h^{2m+2}\) appears sharp once again, as confirmed by the case \(m=0\) and \(\ell=2\).

To summarize, a key observation concerns geometric superconvergence for both formulations. 
For the single-layer formulation, the geometric error for even-degree elements converges at the rate \(h^{\ell+2}\), exceeding the predicted \(h^{\ell+1}\). 
For the CFIE formulation, superconvergence occurs at rate \(h^{\ell+1}\) for odd \(\ell\), and at the rate \(h^{\ell+2}\) for even \(\ell\). 
This phenomenon deserves further study.  
These findings are summarized in \cref{tab:experiments}.

\begin{figure}
  \centering
  \includegraphics[width=1\textwidth]{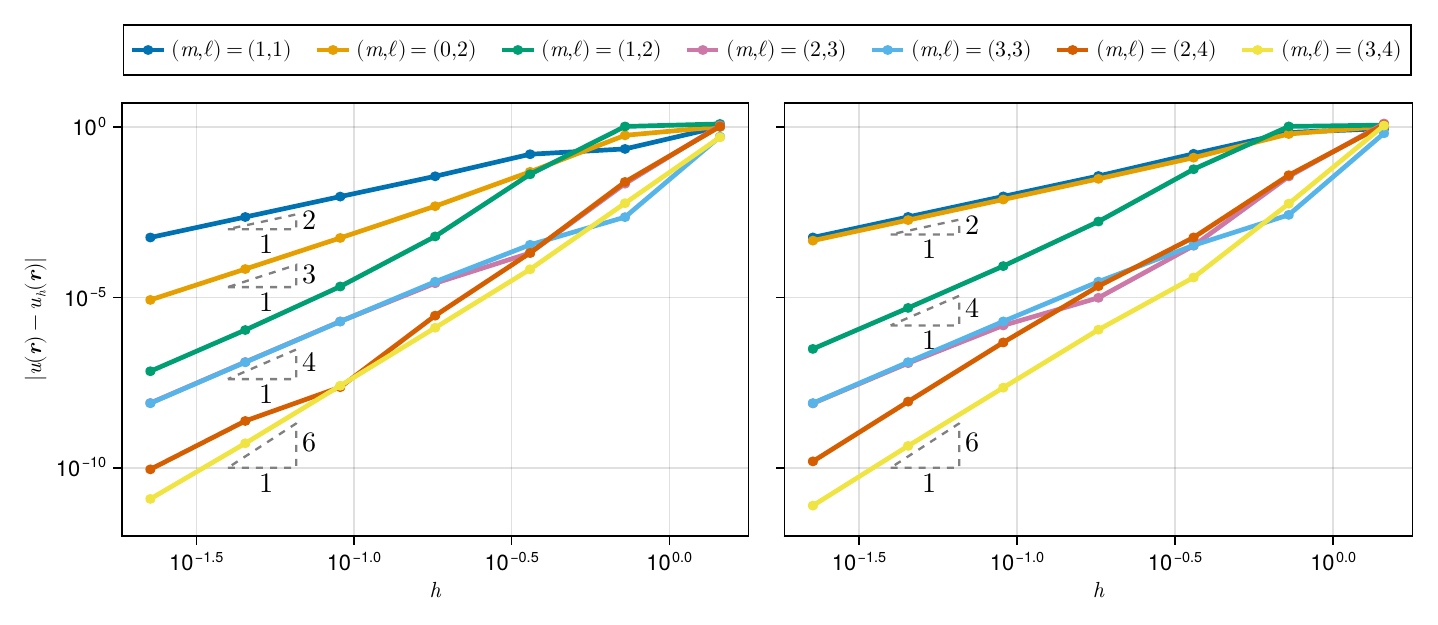}
\caption{Relative error at the observation point $\boldsymbol{r} = (1,2,3)$ for the single-layer (left) and CFIE (right) formulations on the sound-soft sphere. For the single-layer, the observed rate matches the predicted $\min(2m+3, \ell+1)$ when $\ell$ is odd, and improves to $\min(2m+3, \ell+2)$ when $\ell$ is even. For the CFIE, the observed rate exceeds the predicted $\min(2m+2, \ell)$, reaching $\min(2m+2, \ell+1)$ for odd $\ell$ and $\min(2m+2, \ell+2)$ for even $\ell$.}
  \label{fig:soundsoft-sphere}
\end{figure}

\begin{table}
  \caption{\textit{Predicted and observed convergence rates for the single-layer and CFIE formulations for the sound-soft sphere with plane incident wave. We observe super-convergence behavior.}}  
  \centering
  \ra{1.3}
  \begin{tabular}{c|cc}
    \toprule
    & Single-layer                         & CFIE (with normal to the element)     \\
    \midrule
    \multirow{2}{*}{Predicted}
    & \cref{thm:ptwise-eval-helmholtz-SL}  & \cref{thm:ptwise-eval-helmholtz-CFIE} \\
    & $\min(2m+3, \ell+1)$                 & $\min(2m+2, \ell)$                    \\
    \midrule
    \multirow{2}{*}{Observed} 
    & $\min(2m+3, \ell+1)$ for odd $\ell$  & $\min(2m+2, \ell+1)$ for odd $\ell$   \\
    & $\min(2m+3, \ell+2)$ for even $\ell$ & $\min(2m+2, \ell+2)$ for even $\ell$  \\
    \bottomrule
  \end{tabular}
  \label{tab:experiments}
\end{table}

%%%%%%%%%%%%%%%%%%%%%%%%%%%%%%%%%%%%%%%%%%%%%%%%%%%%%%%%%%%%%%%%%%%%%%%%%%%%%%%%%%%%%%%%%%%%%%%%%%%%%%%%%%%%%%%%%%%%%%%%%%%%%%%%%%%%%%%%%%%%%%%%%%%%%%
\subsection{Bean-shaped object}

Our second validation employs a bean-shaped object, described in \cite[\S 6.4]{bruno2001}, representing a more complex geometry without an available exact solution. We conduct a self-convergence test, taking as reference solution \(u_{\mathrm{ref}}\) the result computed with the highest-order elements \((m=3, \ell=4)\) on the finest mesh. The results, displayed in \cref{fig:soundsoft-bean}, exhibit the same convergence behavior previously observed for the sphere. Notably, the geometric superconvergence persists despite the geometry being less symmetric, confirming that this phenomenon is not merely an artifact of the sphere's special symmetries.

\begin{figure}
  \centering
  \includegraphics[width=1\textwidth]{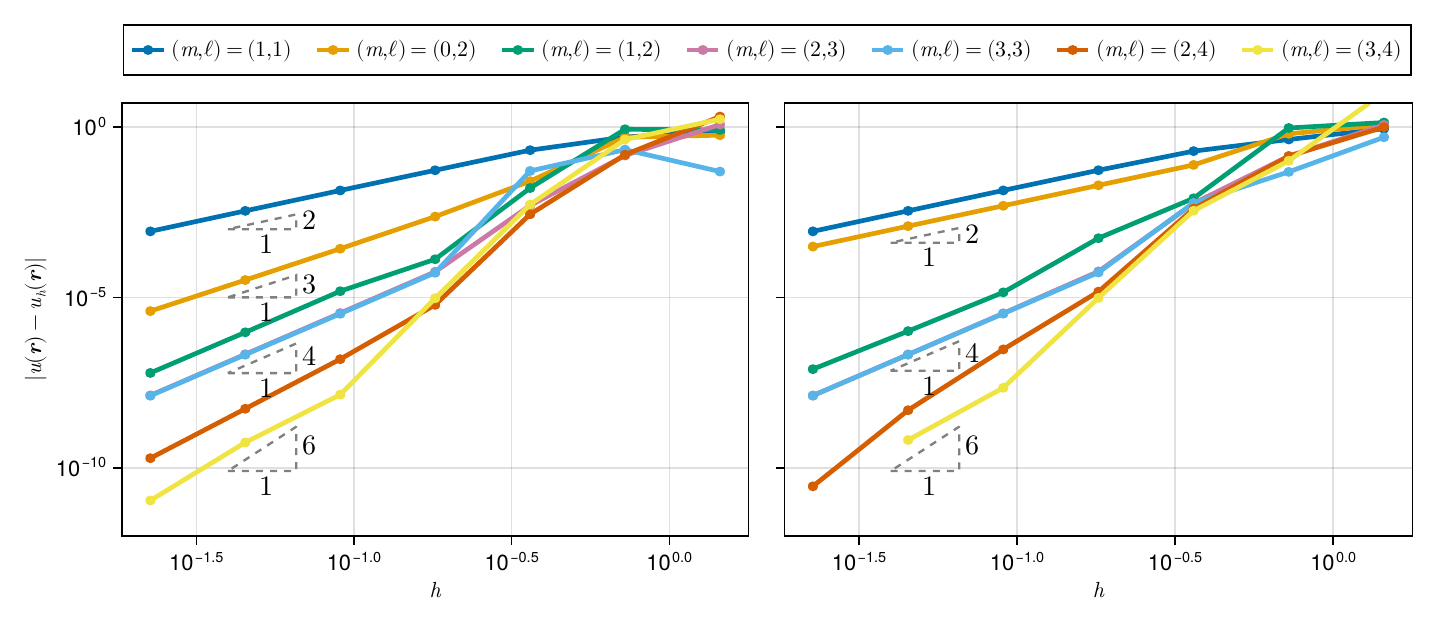}
  \caption{Relative error at an observation point $\boldsymbol{r} = (1,2,3)$ for the single-layer (left) and CFIE formulation (right) for the sound-soft bean problem. The reference solution corresponds to the CFIE formulation with $m=3$ and $\ell=4$ on the finest mesh.}
  \label{fig:soundsoft-bean}
\end{figure}

%%%%%%%%%%%%%%%%%%%%%%%%%%%%%%%%%%%%%%%%%%%%%%%%%%%%%%%%%%%%%%%%%%%%%%%%%%%%%%%%%%%%%%%%%%%%%%%%%%%%%%%%%%%%%%%%%%%%%%%%%%%%%%%%%%%%%%%%%%%%%%%%%%%%%%
\section{Discussion}

In this paper, we have proved sharper convergence rates of boundary element methods for
the 3D Laplace and Helmholtz equations, focusing on smooth geometries and data. We
believe it is important for practitioners to choose $m$ and $\ell$ in a near-optimal
way, especially when repeatedly solving the direct problem to generate synthetic data
for the inverse problem~\cite{montanelli2023, montanelli2024b, montanelli2025a}.

Our analysis looked closely at the consistency of the perturbed bilinear and
sesquilinear forms, giving us results that were confirmed by our numerical experiments. Two observations were made.

First, we observed geometric superconvergence of order \(h^{\ell+2}\) for even-degree elements (\(\ell=2,4\)), a phenomenon consistent across both symmetric (sphere) and more complex (bean) geometries. As reported in other contexts~\cite{bonito2018, caubet2024}, this superconvergence appears to be a robust feature rather than an artifact of geometric symmetry. Similar numerical experiments in two dimensions, not shown here, confirmed the same convergence orders as in \cref{fig:soundsoft-sphere,fig:soundsoft-bean}. In particular, this superconvergence was consistently observed and, notably, we managed to break it for \(\ell=2\) on the bean geometry by using a special Lagrange finite element with the edge midpoint interpolation point shifted. This suggests that the finite element analysis presented here is sharp for the single-layer formulation, and that a more detailed analysis---incorporating the specifics of the geometric approximation procedure beyond the polynomial degree \(\ell\)---is necessary.

Second, a geometric superconvergence was also observed when using the elementwise normal \(\bs{n}_h\), instead of the interpolated normal \(\bs{\nu}_h\), for all values of \(\ell\). Although \(\bs{\nu}_h\) is required in our analysis of the double-layer and CFIE formulations to guarantee a geometric error of order \(h^{\ell+1}\), we observed this convergence rate with \(\bs{n}_h\), along with superconvergence of order \(h^{\ell+2}\) for even-degree elements.

We plan to investigate these superconvergence phenomena further and extend our analysis to problems such as Maxwell equations and elasticity. For Maxwell equations, pointwise error bounds were obtained in \cite{bendali1984}, but with a loss of order $h^{1/2 + \sigma}$ (for $0 < \sigma \leq 1/2$) due to inverse inequalities. The authors suggested that an appropriate Aubin--Nitsche argument could improve these estimates.

%%%%%%%%%%%%%%%%%%%%%%%%%%%%%%%%%%%%%%%%%%%%%%%%%%%%%%%%%%%%%%%%%%%%%%%%%%%%%%%%%%%%%%%%%%%%%%%%%%%%%%%%%%%%%%%%%%%%%%%%%%%%%%%%%%%%%%%%%%%%%%%%%%%%%%
\appendix

\section{Theoretical tools}\label{app:theory}

This is largely based on \cite{ern2021b, sauter2011}. For coercive problems, the Lax--Milgram theorem and its discrete counterpart correspond to \cite[Lem.~25.2]{ern2021b} and \cite[Lem.~26.3]{ern2021b}, while C\'{e}a's lemma is given in \cite[Lem.~26.13]{ern2021b}. For inf-sup stable problems, Ne\v{c}as' theorem and its discrete analogue appear in \cite[Thm.~25.9]{ern2021b} and \cite[Thm.~26.6]{ern2021b}, and Babu\v{s}ka's lemma is stated in \cite[Lem.~26.14]{ern2021b}.

%%%%%%%%%%%%%%%%%%%%%%%%%%%%%%%%%%%%%%%%%%%%%%%%%%%%%%%%%%%%%%%%%%%%%%%%%%%%%%%%%%%%%%%%%%%%%%%%%%%%%%%%%%%%%%%%%%%%%%%%%%%%%%%%%%%%%%%%%%%%%%%%%%%%%%
\subsection{Coercive case}

Let $V$ be a real Hilbert space, and $a:V\times V\to\R$ and $F:V\to\R$ be bilinear and linear forms. We assume that $a$ and $F$ are continuous on $V\times V$ and $V$ with constants $C>0$ and $C'>0$, i.e.,
\begin{align}\label{eq:continuity-A}
  \vert a(u,v)\vert \leq C\Vert u\Vert_V \Vert v\Vert_V, \quad \vert F(v)\vert \leq C'\Vert v\Vert_V \quad \forall u,v\in V.
\end{align}
We consider the following abstract weak formulation.

%%%%%%%%%%%%%%%%%%%%%%%%%%%%%%%%%%%%%%%%%%%%%%%%%%%%%%%%%%%%%%%%%%%%%%%%%%%%%%%%%%%%%%%%%%%%%%%%%%%%%%%%%%%%%%%%%%%%%%%%%%%%%%%%%%%%%%%%%%%%%%%%%%%%%%
\begin{problem}[Weak formulation]\label{pb:weak-A}
Find $u\in V$ such that
\begin{align*}
  a(u,v) = F(v) \quad \forall v\in V.
\end{align*}
\end{problem}

Assume that $a$ is coercive on $V$, i.e., there exists $\alpha>0$ such that
\begin{align}\label{eq:coercivity}
  a(u,u) \geq \alpha\Vert u\Vert_V^2 \quad \forall u\in V.
\end{align}
Conditions \cref{eq:continuity-A}--\cref{eq:coercivity} imply existence, uniqueness, and stability of the solution via the celebrated 1955 Lax--Milgram theorem \cite{lax1955}.

%%%%%%%%%%%%%%%%%%%%%%%%%%%%%%%%%%%%%%%%%%%%%%%%%%%%%%%%%%%%%%%%%%%%%%%%%%%%%%%%%%%%%%%%%%%%%%%%%%%%%%%%%%%%%%%%%%%%%%%%%%%%%%%%%%%%%%%%%%%%%%%%%%%%%%
\begin{theorem}[Lax--Milgram, 1955]\label{thm:lax-milgram}
  Under assumptions \cref{eq:continuity-A}--\cref{eq:coercivity}, there exists a unique solution $u\in V$ to \cref{pb:weak-A}, and
  \begin{align*}
    \Vert u\Vert_V \leq \frac{1}{\alpha}\Vert F\Vert_{V'}.
  \end{align*}
\end{theorem}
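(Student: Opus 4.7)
The plan is to reformulate \cref{pb:weak-A} as an operator equation in $V$ and invoke classical Hilbert space theory. First, by the Riesz representation theorem, I would identify the continuous linear functional $F$ with an element $u_F\in V$ satisfying $F(v)=(u_F,v)_V$ for all $v\in V$, with $\Vert u_F\Vert_V=\Vert F\Vert_{V'}$. Similarly, for each fixed $u\in V$, the map $v\mapsto a(u,v)$ is continuous on $V$ by \cref{eq:continuity-A}, so Riesz provides a unique $Au\in V$ such that $a(u,v)=(Au,v)_V$ for all $v\in V$. Linearity of $a$ in its first argument makes $A:V\to V$ linear, and $\Vert Au\Vert_V\leq C\Vert u\Vert_V$. \cref{pb:weak-A} is then equivalent to solving $Au=u_F$ in $V$.

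Next, I would extract injectivity and a closed-range bound from coercivity. For any $u\in V$, Cauchy--Schwarz combined with \cref{eq:coercivity} yields
\begin{equation*}
  \Vert Au\Vert_V\,\Vert u\Vert_V\geq (Au,u)_V=a(u,u)\geq \alpha\Vert u\Vert_V^2,
\end{equation*}
hence $\Vert Au\Vert_V\geq \alpha\Vert u\Vert_V$. This lower bound immediately gives injectivity of $A$, and a standard Cauchy-sequence argument shows that the range $R(A)$ is closed in $V$.

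To obtain surjectivity, I would argue by orthogonality: if $w\in R(A)^\perp$, then in particular $(Aw,w)_V=a(w,w)=0$, and coercivity forces $w=0$. Therefore $R(A)^\perp=\{0\}$, which together with the closedness of $R(A)$ gives $R(A)=V$. Consequently $A$ is a bijection with bounded inverse, and $u=A^{-1}u_F$ is the unique solution to \cref{pb:weak-A}.

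Finally, the stability estimate is the easiest step: testing the weak formulation with $v=u$ and combining coercivity with the continuity of $F$ gives
\begin{equation*}
  \alpha\Vert u\Vert_V^2\leq a(u,u)=F(u)\leq \Vert F\Vert_{V'}\Vert u\Vert_V,
\end{equation*}
from which $\Vert u\Vert_V\leq \Vert F\Vert_{V'}/\alpha$ follows at once. No serious obstacle arises here since this is a textbook result; the only subtle step is the surjectivity argument, where closedness of $R(A)$ (from the coercive lower bound on $A$) must be combined with the orthogonality observation that coercivity also rules out nontrivial elements in $R(A)^\perp$.
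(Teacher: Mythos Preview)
Your argument is a correct, standard proof of the Lax--Milgram theorem via the Riesz representation theorem. The paper, however, does not supply its own proof: the theorem is stated as a classical result with a reference to the original 1955 paper and to \cite[Lem.~25.2]{ern2021b}, so there is nothing to compare against beyond noting that your proposal fills in what the authors deliberately omit.
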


We approximate $V$ by a dense sequence $\{V_h\}_{h>0}$ of finite-dimensional subspaces. This gives the following Galerkin approximation problem.

%%%%%%%%%%%%%%%%%%%%%%%%%%%%%%%%%%%%%%%%%%%%%%%%%%%%%%%%%%%%%%%%%%%%%%%%%%%%%%%%%%%%%%%%%%%%%%%%%%%%%%%%%%%%%%%%%%%%%%%%%%%%%%%%%%%%%%%%%%%%%%%%%%%%%%
\begin{problem}[Galerkin approximation problem]\label{pb:galerkin-A}
Find $u_h\in V_h$ such that
\begin{align*}
  a(u_h, v_h) = F(v_h) \quad \forall v_h\in V_h.
\end{align*}
\end{problem}

If $a$ is continuous and coercive on $V\times V$, then it is also continuous and coercive on $V_h\times V_h$ with constants $C_h\leq C$ and $\alpha_h\geq\alpha$. Similarly, $F$ is also continuous on $V_h$ with constant $C'_h\leq C'$. Then we have uniqueness to the solution to the Galerkin approximation problem (via \cref{thm:lax-milgram}), and ``quasi-optimality.'' This latter result goes back to Jean C\'{e}a's Ph.D.~thesis in 1964 \cite{cea1964}.

%%%%%%%%%%%%%%%%%%%%%%%%%%%%%%%%%%%%%%%%%%%%%%%%%%%%%%%%%%%%%%%%%%%%%%%%%%%%%%%%%%%%%%%%%%%%%%%%%%%%%%%%%%%%%%%%%%%%%%%%%%%%%%%%%%%%%%%%%%%%%%%%%%%%%%
\begin{lemma}[C\'{e}a, 1964] \label{lem:cea}
  Under assumptions \cref{eq:continuity-A}--\cref{eq:coercivity}, there exists a unique solution $u_h\in V_h$ to \cref{pb:galerkin-A}, and
  \begin{align*}
    \Vert u - u_h\Vert_V \leq \frac{C}{\alpha}\inf_{v_h\in V_h}\Vert u - v_h\Vert_V.
  \end{align*}
\end{lemma}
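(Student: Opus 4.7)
The plan is to split the argument into two parts: first, existence and uniqueness of $u_h$; and second, the quasi-optimality bound. For existence and uniqueness, I would apply the Lax--Milgram theorem (\cref{thm:lax-milgram}) on the Hilbert space $V_h$, which is closed in $V$ as a finite-dimensional subspace. The continuity and coercivity of $a$ on $V \times V$ transfer directly to $V_h \times V_h$ with constants $C_h \leq C$ and $\alpha_h \geq \alpha$, and likewise $F$ is continuous on $V_h$ with constant $C_h' \leq C'$, so \cref{thm:lax-milgram} immediately produces a unique $u_h \in V_h$ solving \cref{pb:galerkin-A}.

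The quasi-optimality bound rests on Galerkin orthogonality. Since test functions in $V_h$ are admissible in both \cref{pb:weak-A} and \cref{pb:galerkin-A}, subtracting the two formulations yields
\[
a(u - u_h, v_h) = 0 \qquad \forall v_h \in V_h.
\]
Combining this with coercivity, for any $v_h \in V_h$ I would write
\[
\alpha\,\Vert u - u_h\Vert_V^2 \leq a(u - u_h, u - u_h) = a(u - u_h, u - v_h) + a(u - u_h, v_h - u_h).
\]
The second term vanishes by Galerkin orthogonality since $v_h - u_h \in V_h$, and the first is bounded by $C\,\Vert u - u_h\Vert_V\,\Vert u - v_h\Vert_V$ via the continuity of $a$. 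Dividing by $\Vert u - u_h\Vert_V$ (the case $u = u_h$ being trivial) and taking the infimum over $v_h \in V_h$ yields the claimed bound with constant $C/\alpha$.

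There is no genuine obstacle here: the argument is classical, going back to C\'ea's 1964 thesis. The only conceptual ingredient beyond the discrete Lax--Milgram step is the Galerkin orthogonality identity, which is what reduces the discretization error to the best approximation error in $V_h$ up to the multiplicative constant $C/\alpha$. Everything else is a one-line manipulation using coercivity on the diagonal and continuity on the off-diagonal term.
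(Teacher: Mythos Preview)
Your argument is correct and is precisely the classical proof of C\'ea's lemma: Lax--Milgram on the closed subspace $V_h$ for well-posedness, then Galerkin orthogonality combined with coercivity and continuity for the quasi-optimality bound with constant $C/\alpha$. The paper itself does not supply a proof of this lemma; it merely states the result and cites C\'ea's thesis \cite{cea1964} and \cite[Lem.~26.13]{ern2021b}, so your write-up is in fact more detailed than what the paper provides and matches the standard argument found in those references.
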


In practice, due to surface approximation with boundary elements, one only has access to some perturbed forms $a_h:V_h\times V_h\to\R$ and $F_h:V_h\to\R$. We assume that $a_h$ and $F_h$ are continuous on $V_h\times V_h$ and $V_h$ with constants $D_h>0$ and $D_h'>0$, i.e.,
\begin{align}\label{eq:discrete-continuity}
  \vert a_h(u_h,v_h)\vert \leq D_h\Vert u_h\Vert_V \Vert v_h\Vert_V, \quad \vert F_h(v_h)\vert \leq D_h'\Vert v_h\Vert_V \quad \forall u_h,v_h\in V_h.
\end{align}
This yields the following \textit{perturbed} Galerkin approximation problem.

%%%%%%%%%%%%%%%%%%%%%%%%%%%%%%%%%%%%%%%%%%%%%%%%%%%%%%%%%%%%%%%%%%%%%%%%%%%%%%%%%%%%%%%%%%%%%%%%%%%%%%%%%%%%%%%%%%%%%%%%%%%%%%%%%%%%%%%%%%%%%%%%%%%%%%
\begin{problem}[Perturbed Galerkin approximation problem]\label{pb:perturbed-garlerkin-A}
Find $u_h\in V_h$ such that
\begin{align*}
  a_h(u_h, v_h) = F_h(v_h) \quad \forall v_h\in V_h.
\end{align*}
\end{problem}

Assume that $a_h$ is \textit{uniformly} coercive on $V_h$, i.e., there exists $\beta>0$ such that
\begin{align}\label{eq:uniform-coercivity}
  a_h(u_h,u_h) \geq \beta\Vert u_h\Vert_V^2 \quad \forall u_h\in V_h.
\end{align}
Then we have the following result, going back to Strang in 1972 \cite{strang1972}.

%%%%%%%%%%%%%%%%%%%%%%%%%%%%%%%%%%%%%%%%%%%%%%%%%%%%%%%%%%%%%%%%%%%%%%%%%%%%%%%%%%%%%%%%%%%%%%%%%%%%%%%%%%%%%%%%%%%%%%%%%%%%%%%%%%%%%%%%%%%%%%%%%%%%%%
\begin{lemma}[Strang, 1972]\label{lem:strang-A}
  Under assumptions \cref{eq:continuity-A}--\cref{eq:uniform-coercivity}, there exists a unique solution $u_h\in V_h$ to \cref{pb:perturbed-garlerkin-A}, and
  \begin{align*}
    \Vert u - u_h\Vert_V \leq c\left\{\sup_{w_h\in V_h}\frac{\vert F(w_h) - F_h(w_h)\vert}{\Vert w_h\Vert_V} + \hspace{-0.05cm} \inf_{v_h\in V_h}\left(\Vert u - v_h\Vert_V + \hspace{-0.05cm} \sup_{w_h\in V_h}\frac{\vert a(v_h,w_h) - a_h(v_h,w_h)\vert}{\Vert w_h\Vert_V}\right)\right\},
  \end{align*}
  with $c=\max\{1 + C/\beta,1/\beta\}$.
\end{lemma}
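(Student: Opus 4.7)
The plan is to first dispatch existence and uniqueness of $u_h$ and then derive the error bound via a triangle inequality combined with the uniform coercivity of $a_h$. For existence and uniqueness, I would apply Lax--Milgram (\cref{thm:lax-milgram}) in the finite-dimensional space $V_h$: the sesquilinear form $a_h$ is continuous on $V_h \times V_h$ by \cref{eq:discrete-continuity} and coercive by \cref{eq:uniform-coercivity}, while $F_h$ is continuous on $V_h$. This immediately yields a unique $u_h \in V_h$ solving \cref{pb:perturbed-garlerkin-A}.

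For the error estimate, I would pick an arbitrary $v_h \in V_h$ and split
\[
\|u - u_h\|_V \le \|u - v_h\|_V + \|v_h - u_h\|_V,
\]
then bound the second term by exploiting uniform coercivity. Setting $w_h := v_h - u_h \in V_h$, coercivity gives $\beta \|w_h\|_V^2 \le a_h(v_h - u_h, w_h)$. The key algebraic manipulation is
\[
a_h(v_h - u_h, w_h) = \bigl[a_h(v_h, w_h) - a(v_h, w_h)\bigr] + \bigl[a(v_h, w_h) - F(w_h)\bigr] + \bigl[F(w_h) - F_h(w_h)\bigr],
\]
where I used $a_h(u_h, w_h) = F_h(w_h)$. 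Since $u$ solves \cref{pb:weak-A}, the middle bracket equals $a(v_h - u, w_h)$, which continuity \cref{eq:continuity-A} bounds by $C\|u - v_h\|_V \|w_h\|_V$. Dividing by $\beta \|w_h\|_V$ yields
\[
\|v_h - u_h\|_V \le \frac{1}{\beta}\Biggl[C\|u - v_h\|_V + \sup_{w_h \in V_h}\frac{|a(v_h, w_h) - a_h(v_h, w_h)|}{\|w_h\|_V} + \sup_{w_h \in V_h}\frac{|F(w_h) - F_h(w_h)|}{\|w_h\|_V}\Biggr].
\]

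Inserting this into the triangle inequality, collecting the $\|u - v_h\|_V$ terms into a factor $1 + C/\beta$, and observing that the $F - F_h$ supremum is independent of $v_h$ (so can be pulled out of the infimum), I can take the infimum over $v_h \in V_h$ to obtain the stated bound with $c = \max\{1 + C/\beta, 1/\beta\}$. There is no real obstacle here: the only subtlety is the bookkeeping that ensures the infimum is over $v_h$ only and that the constant $c$ comes out as the stated maximum rather than a sum; this is handled by factoring $1/\beta$ from the last two suprema and $(1 + C/\beta)$ from the approximation term.
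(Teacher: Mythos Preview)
Your proof is correct and is the standard argument for Strang's first lemma. The paper does not actually supply a proof of \cref{lem:strang-A}; it states the result and attributes it to Strang~\cite{strang1972}, so there is nothing to compare against beyond noting that your argument is the classical one.
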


%%%%%%%%%%%%%%%%%%%%%%%%%%%%%%%%%%%%%%%%%%%%%%%%%%%%%%%%%%%%%%%%%%%%%%%%%%%%%%%%%%%%%%%%%%%%%%%%%%%%%%%%%%%%%%%%%%%%%%%%%%%%%%%%%%%%%%%%%%%%%%%%%%%%%%
\begin{remark}[Uniform coercivity]\label{rem:uniform-coercivity}
  If $a$ is coercive and $a_h$ satisfies the consistency estimate
  \begin{align*}
    \vert a(u_h,v_h) - a_h(u_h,v_h)\vert \leq c_h\Vert u_h\Vert_V\Vert v_h\Vert_V \quad \forall u_h,v_h\in V_h,
  \end{align*}
  with $c_h\to0$ as $h\to0$, then $a_h$ is uniformly coercive for sufficiently small $h$ since
  \begin{align*}
    a_h(u_h,u_h) \geq (\alpha - c_{h_0})\Vert u_h\Vert_V^2 \quad \forall u_h\in V_h, \; \forall h\leq h_0,
  \end{align*}
  for some $h_0>0$.
\end{remark}

%%%%%%%%%%%%%%%%%%%%%%%%%%%%%%%%%%%%%%%%%%%%%%%%%%%%%%%%%%%%%%%%%%%%%%%%%%%%%%%%%%%%%%%%%%%%%%%%%%%%%%%%%%%%%%%%%%%%%%%%%%%%%%%%%%%%%%%%%%%%%%%%%%%%%%
\subsection{``Coercive plus compact'' case}

Let $V$ be a complex Hilbert space, and $b:V\times V\to\C$ and $F:V\to\C$ be sesquilinear and anti-linear forms. We assume that $b$ and $F$ are continuous on $V\times V$ and $V$ with constants $C>0$ and $C'>0$, i.e.,
\begin{align}\label{eq:continuity-B}
  \vert b(u,v)\vert \leq C\Vert u\Vert_V \Vert v\Vert_V, \quad \vert F(v)\vert \leq C'\Vert v\Vert_V \quad \forall u,v\in V.
\end{align}
We consider the following abstract weak formulation.

%%%%%%%%%%%%%%%%%%%%%%%%%%%%%%%%%%%%%%%%%%%%%%%%%%%%%%%%%%%%%%%%%%%%%%%%%%%%%%%%%%%%%%%%%%%%%%%%%%%%%%%%%%%%%%%%%%%%%%%%%%%%%%%%%%%%%%%%%%%%%%%%%%%%%%
\begin{problem}[Weak formulation]\label{pb:weak-B}
Find $u\in V$ such that
\begin{align*}
  b(u,v) = F(v) \quad \forall v\in V.
\end{align*}
\end{problem}

Assume that there exist a coercive sesquilinear form $a:V\times V\to\C$ and a sesquilinear form $t:V\times V\to\C$, whose associated operator \(T\in L(V,V')\) is compact, such that
\begin{align}\label{eq:garding}
  b(u,v) = a(u,v)+ t(u,v) \quad \forall u,v\in V.
\end{align}
We also assume ``injectivity'' in the first variable, i.e.,
\begin{align}\label{eq:injectivity}
  \forall v \in V\setminus\left\{0\right\},\quad b(u,v)=0 \implies u=0.
\end{align}
In application of the Fredholm alternative \cite{fredholm1903}, we have the following theorem; see \cite[Thm.~4.2.9]{sauter2011}.

%%%%%%%%%%%%%%%%%%%%%%%%%%%%%%%%%%%%%%%%%%%%%%%%%%%%%%%%%%%%%%%%%%%%%%%%%%%%%%%%%%%%%%%%%%%%%%%%%%%%%%%%%%%%%%%%%%%%%%%%%%%%%%%%%%%%%%%%%%%%%%%%%%%%%%
\begin{theorem}[Fredholm, 1903]\label{thm:fredholm}
  Under assumptions \cref{eq:continuity-B}--\cref{eq:injectivity}, there exists a unique solution $u\in V$ to \cref{pb:weak-B}, and
  \begin{align*}
    \Vert u\Vert_V \leq c\Vert F\Vert_{V'}.
  \end{align*}
\end{theorem}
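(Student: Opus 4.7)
The plan is to recast the weak formulation as an operator equation on $V$ and apply the Fredholm alternative for compact perturbations of an isomorphism. First, associate to the sesquilinear forms their canonical operators $A, T, B \in L(V, V')$ defined by $\langle Au, v\rangle_{V',V} = a(u,v)$, $\langle Tu, v\rangle_{V',V} = t(u,v)$, and $\langle Bu, v\rangle_{V',V} = b(u,v)$, so that \cref{eq:garding} yields $B = A + T$ with $T$ compact by assumption, and continuity \cref{eq:continuity-B} ensures all three operators are bounded. The weak problem \cref{pb:weak-B} is then equivalent to the equation $Bu = F$ in $V'$.

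Next, invert the coercive part. The complex-sesquilinear version of Lax--Milgram (\cref{thm:lax-milgram}, extended to sesquilinear forms with no essential change) shows that $A$ is an isomorphism with $\Vert A^{-1}\Vert_{L(V',V)} \leq 1/\alpha$. Composing with $A^{-1}$, the equation $Bu = F$ becomes $(I + K)u = A^{-1}F$ in $V$, where $K := A^{-1} T \in L(V,V)$ is compact as the composition of a compact operator with a bounded one.

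I then invoke the Fredholm alternative: for a compact operator $K$ on a Hilbert space, $I + K$ is bijective if and only if it is injective, and in that case its inverse is automatically bounded (by, e.g., the open mapping theorem applied to the Fredholm index zero of $I + K$). Injectivity follows immediately from \cref{eq:injectivity}: if $(I+K)u = 0$, then $Bu = 0$ in $V'$, i.e., $b(u,v) = 0$ for all $v \in V$, so $u = 0$. Consequently, there exists a unique $u \in V$ solving \cref{pb:weak-B}, given by $u = (I+K)^{-1} A^{-1} F$, and the stability bound $\Vert u\Vert_V \leq c\Vert F\Vert_{V'}$ follows with $c = \Vert(I+K)^{-1}\Vert_{L(V,V)} / \alpha$.

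The main obstacle is conceptual rather than computational: the entire argument rests on the nontrivial fact that, in infinite dimensions, injectivity of $I + K$ for compact $K$ already implies surjectivity and bounded invertibility. This is the content of the Fredholm alternative and is invoked as a black box; the rest of the argument is routine. A minor caveat is that the stability constant $c$ is not explicit, since the norm of $(I+K)^{-1}$ depends on the specific compact operator $K$ through its resolvent at $-1$, and one should stress in the write-up that $c$ is independent of $F$ (though not, in general, of the problem data).
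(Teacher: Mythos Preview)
Your argument is correct and is exactly the standard route the paper has in mind: the paper does not supply its own proof but simply invokes the Fredholm alternative and points to \cite[Thm.~4.2.9]{sauter2011}, whose proof proceeds precisely as you describe (reduce to $(I+K)u=A^{-1}F$ with $K=A^{-1}T$ compact, then use injectivity). Nothing needs to be added.
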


Note that, using the 1962 Ne\v{c}as theorem \cite{necas1962}, the well-posedness obtained in \cref{thm:fredholm} is equivalent to the \textit{continuous} inf-sup conditions
\begin{align*}
   & \inf_{u\in V\setminus\left\{0\right\}}\sup_{v\in V\setminus\left\{0\right\}}\frac{\vert b(u,v)\vert}{\Vert u\Vert_V\Vert v\Vert_V} \geq 1/c > 0, \\
   & \forall v\in V\setminus\left\{0\right\} ,\quad \sup_{u\in V\setminus\left\{0\right\}}\vert b(u,v)\vert>0.
\end{align*}

We approximate $V$ by a dense sequence $\{V_h\}_{h>0}$ of finite-dimensional subspaces. This gives the following Galerkin approximation problem.

%%%%%%%%%%%%%%%%%%%%%%%%%%%%%%%%%%%%%%%%%%%%%%%%%%%%%%%%%%%%%%%%%%%%%%%%%%%%%%%%%%%%%%%%%%%%%%%%%%%%%%%%%%%%%%%%%%%%%%%%%%%%%%%%%%%%%%%%%%%%%%%%%%%%%%
\begin{problem}[Galerkin approximation problem]\label{pb:galerkin-B}
Find $u_h\in V_h$ such that
\begin{align*}
  b(u_h, v_h) = F(v_h) \quad \forall v_h\in V_h.
\end{align*}
\end{problem}

We note that $b$ and $F$ are continuous on $V_h\times V_h$ and $V_h$ with constants $C_h\leq C$ and $C'_h\leq C$. However, the \textit{continuous} inf-sup conditions above do not imply the \textit{discrete} inf-sup conditions required for the well-posedness of \cref{pb:galerkin-B}. However, conditions \cref{eq:continuity-B}--\cref{eq:injectivity} actually imply \textit{uniform}, \textit{discrete} inf-sup conditions for sufficiently small $h$~\cite[Thm.~4.2.9]{sauter2011}; that is, there exists $h_0>0$ and $\alpha>0$ such that for all $h \leq h_0$,
\begin{align*}
   & \inf_{u_h \in V_h \setminus \{0\}} \sup_{v_h \in V_h \setminus \{0\}} \frac{|b(u_h, v_h)|}{\|u_h\|_V \|v_h\|_V} \geq \alpha > 0, \\
   & \forall v_h \in V_h \setminus \{0\}, \quad \sup_{u_h \in V_h \setminus \{0\}} |b(u_h, v_h)| > 0.
\end{align*}
Hence, \cref{pb:galerkin-B} is well-posed by the (discrete) Ne\v{c}as theorem. The quasi-optimality result dates back to Babu\v{s}ka's 1971 theorem~\cite{babuska1971}.

%%%%%%%%%%%%%%%%%%%%%%%%%%%%%%%%%%%%%%%%%%%%%%%%%%%%%%%%%%%%%%%%%%%%%%%%%%%%%%%%%%%%%%%%%%%%%%%%%%%%%%%%%%%%%%%%%%%%%%%%%%%%%%%%%%%%%%%%%%%%%%%%%%%%%%
\begin{lemma}[Babu\v{s}ka, 1971]\label{lem:babuska}
  Under assumptions \cref{eq:continuity-B}--\cref{eq:injectivity}, there exists a unique solution $u_h\in V_h$ to \cref{pb:galerkin-B}, and
  \begin{align*}
    \Vert u - u_h\Vert_V \leq \left(1+\frac{C}{\alpha}\right)\inf_{v_h\in V_h}\Vert u - v_h\Vert_V.
  \end{align*}
\end{lemma}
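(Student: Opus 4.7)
The plan is to assemble three ingredients that are already in place in the paragraph preceding the lemma: (i) the uniform discrete inf-sup condition on $V_h$ for $h\leq h_0$, which follows from \cref{eq:continuity-B}--\cref{eq:injectivity} by the ``coercive plus compact'' argument cited from \cite[Thm.~4.2.9]{sauter2011}; (ii) the Galerkin orthogonality $b(u-u_h,v_h)=0$ for all $v_h\in V_h$, obtained by subtracting \cref{pb:galerkin-B} from \cref{pb:weak-B} tested on $v_h\in V_h\subset V$; and (iii) the continuity bound \cref{eq:continuity-B} with constant $C$. Existence and uniqueness of $u_h$ follow at once from the uniform discrete inf-sup conditions together with the discrete Ne\v{c}as theorem \cite[Thm.~26.6]{ern2021b}, so the substance of the proof is the quasi-optimality estimate.

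For that estimate, I would fix an arbitrary $v_h\in V_h$ and split the error with the triangle inequality,
\[\Vert u - u_h\Vert_V \leq \Vert u - v_h\Vert_V + \Vert v_h - u_h\Vert_V,\]
then control the discrete part $\Vert v_h - u_h\Vert_V$ by applying the inf-sup condition to $u_h - v_h\in V_h$. Galerkin orthogonality rewrites the numerator as $b(u_h-v_h,w_h)=b(u-v_h,w_h)$ for every $w_h\in V_h$, and continuity of $b$ then gives
\[\alpha\,\Vert u_h - v_h\Vert_V \leq \sup_{w_h\in V_h\setminus\{0\}}\frac{\vert b(u - v_h, w_h)\vert}{\Vert w_h\Vert_V} \leq C\,\Vert u - v_h\Vert_V.\]
Plugging this back in and taking the infimum over $v_h\in V_h$ yields the factor $1+C/\alpha$.

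I do not expect a real obstacle in this proof: all the delicate work (the G\aa rding decomposition, compactness of $T$, and the passage from the continuous inf-sup to a uniform discrete inf-sup) has been absorbed into the statement on $V_h$ cited just above the lemma. What remains is a standard Galerkin-orthogonality plus continuity argument, exactly parallel to C\'{e}a's lemma (\cref{lem:cea}) but with the coercivity constant $\alpha$ replaced by the uniform discrete inf-sup constant. The only minor point to record is that the constant $\alpha$ in the bound is the uniform discrete inf-sup constant rather than a coercivity constant, which is why the quasi-optimality here holds only for $h\leq h_0$.
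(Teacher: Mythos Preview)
Your proof is correct and follows the standard Babu\v{s}ka argument. Note, however, that the paper does not supply its own proof of this lemma: it is stated as a classical result and attributed to \cite{babuska1971} (see also \cite[Lem.~26.14]{ern2021b}), with the uniform discrete inf-sup conditions established in the preceding paragraph via \cite[Thm.~4.2.9]{sauter2011}. Your argument is precisely the one those references give, so there is nothing to compare.
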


Again, in practice, due to surface approximation with boundary elements, one only has access to some perturbed forms $b_h:V_h\times V_h\to\C$ and $F_h:V_h\to\C$. We assume that $b_h$ and $F_h$ are continuous on $V_h\times V_h$ and $V_h$ with constants $D_h>0$ and $D_h'>0$, i.e.,
\begin{align}\label{eq:discrete-continuity-B}
  \vert b_h(u_h,v_h)\vert \leq D_h\Vert u_h\Vert_V \Vert v_h\Vert_V, \quad \vert F_h(v_h)\vert \leq D_h'\Vert v_h\Vert_V \quad \forall u_h,v_h\in V_h.
\end{align}

%%%%%%%%%%%%%%%%%%%%%%%%%%%%%%%%%%%%%%%%%%%%%%%%%%%%%%%%%%%%%%%%%%%%%%%%%%%%%%%%%%%%%%%%%%%%%%%%%%%%%%%%%%%%%%%%%%%%%%%%%%%%%%%%%%%%%%%%%%%%%%%%%%%%%%
\begin{problem}[Perturbed Galerkin approximation problem]\label{pb:perturbed-garlerkin-B}
Find $u_h\in V_h$ such that
\begin{align*}
  b_h(u_h, v_h) = F_h(v_h) \quad \forall v_h\in V_h.
\end{align*}
\end{problem}

Assume that the $b_h$ satisfies the discrete inf-sup conditions \textit{uniformly}, i.e., there exists $\beta>0$ such that
\begin{align}
   & \inf_{u_h\in V_h\setminus\left\{0\right\}}\sup_{v_h\in V_h\setminus\left\{0\right\}}\frac{\vert b_h(u_h,v_h)\vert}{\Vert u_h\Vert_V\Vert v_h\Vert_V} \geq \beta > 0, \label{eq:discrete-infsup-unif-a} \\
   & \forall v_h\in V_h\setminus\left\{0\right\} ,\quad \sup_{u_h\in V_h\setminus\left\{0\right\}}\vert b_h(u_h,v_h)\vert>0, \label{eq:discrete-infsup-unif-b}
\end{align}

%%%%%%%%%%%%%%%%%%%%%%%%%%%%%%%%%%%%%%%%%%%%%%%%%%%%%%%%%%%%%%%%%%%%%%%%%%%%%%%%%%%%%%%%%%%%%%%%%%%%%%%%%%%%%%%%%%%%%%%%%%%%%%%%%%%%%%%%%%%%%%%%%%%%%%
\begin{lemma}[Strang, 1972]\label{lem:strang-B}
  Under assumptions \cref{eq:continuity-B}--\cref{eq:discrete-infsup-unif-b}, there exists a unique solution $u_h\in V_h$ to \cref{pb:perturbed-garlerkin-B}, and
  \begin{align*}
    \Vert u - u_h\Vert_V \leq c\left\{\sup_{w_h\in V_h}\frac{\vert F(w_h) - F_h(w_h)\vert}{\Vert w_h\Vert_V} + \hspace{-0.05cm} \inf_{v_h\in V_h}\left(\Vert u - v_h\Vert_V + \hspace{-0.05cm} \sup_{w_h\in V_h}\frac{\vert b(v_h,w_h) - b_h(v_h,w_h)\vert}{\Vert w_h\Vert_V}\right)\right\},
  \end{align*}
  with $c=\max\{1 + C/\beta,C/\beta\}$.
\end{lemma}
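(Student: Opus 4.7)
The plan is twofold: first invoke the discrete Ne\v{c}as theorem to obtain existence and uniqueness of $u_h$, then run a standard perturbation argument for the error estimate.

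For well-posedness, I would apply the discrete Ne\v{c}as theorem directly to \cref{pb:perturbed-garlerkin-B}. The continuity of $b_h$ and $F_h$ from \cref{eq:discrete-continuity-B}, together with the uniform discrete inf-sup conditions \cref{eq:discrete-infsup-unif-a}--\cref{eq:discrete-infsup-unif-b}, are exactly the hypotheses needed for well-posedness on $V_h$, yielding a unique $u_h \in V_h$ with stability constant $1/\beta$.

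For the error bound, the approach is a triangle inequality followed by a strategic decomposition. For an arbitrary $v_h \in V_h$, I split $u - u_h = (u - v_h) + (v_h - u_h)$. The first term is kept as is. For the second, I exploit the uniform discrete inf-sup \cref{eq:discrete-infsup-unif-a} of $b_h$ to bound
\begin{align*}
  \Vert v_h - u_h\Vert_V \leq \frac{1}{\beta} \sup_{w_h \in V_h \setminus \{0\}} \frac{\vert b_h(v_h - u_h, w_h)\vert}{\Vert w_h\Vert_V}.
\end{align*}
Using $b_h(u_h, w_h) = F_h(w_h)$ together with the continuous equation $b(u, w_h) = F(w_h)$, the numerator can be rewritten by adding and subtracting as
\begin{align*}
  b_h(v_h - u_h, w_h) = [b_h(v_h, w_h) - b(v_h, w_h)] + b(v_h - u, w_h) + [F(w_h) - F_h(w_h)].
\end{align*}
Bounding the middle term by the continuity \cref{eq:continuity-B} of $b$ as $C \Vert u - v_h\Vert_V \Vert w_h\Vert_V$, then dividing by $\Vert w_h\Vert_V$, taking the supremum over $w_h$, and finally the infimum over $v_h$, delivers the claimed estimate after grouping constants in the form $\max\{1 + C/\beta, C/\beta\}$.

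The argument is essentially bookkeeping and there is no real obstacle. The only subtlety worth flagging is that $b$ and $b_h$ live on different spaces ($V \times V$ versus $V_h \times V_h$), so every evaluation of $b$ must be on arguments from $V_h$, which is legitimate since $V_h \subset V$. This is the same perturbation strategy used in the coercive version \cref{lem:strang-A}: coercivity of $a_h$ is simply replaced by the uniform discrete inf-sup of $b_h$ to invert the map $u_h \mapsto b_h(u_h, \cdot)$, while the rest of the decomposition is identical.
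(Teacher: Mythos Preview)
Your proposal is correct and follows the standard textbook argument; the paper itself does not supply a proof for this lemma but states it as a classical result, deferring to \cite{ern2021b,sauter2011}. Your decomposition and use of the uniform discrete inf-sup condition is exactly the expected route, and the constant you obtain, $\max\{1+C/\beta,\,1/\beta\}$, is in fact the one appearing in the coercive version \cref{lem:strang-A}; the paper's stated constant $\max\{1+C/\beta,\,C/\beta\}$ here simply equals $1+C/\beta$ and is a harmless variant.
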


%%%%%%%%%%%%%%%%%%%%%%%%%%%%%%%%%%%%%%%%%%%%%%%%%%%%%%%%%%%%%%%%%%%%%%%%%%%%%%%%%%%%%%%%%%%%%%%%%%%%%%%%%%%%%%%%%%%%%%%%%%%%%%%%%%%%%%%%%%%%%%%%%%%%%%
\begin{remark}[Uniform discrete inf-sup conditions]\label{rem:uniform-infsup}
  If $b$ satisfies the discrete inf-sup conditions uniformly and $b_h$ satisfies the consistency estimate
  \begin{align*}
    \lvert b(u_h,v_h)-b_h(u_h,v_h) \rvert \leq c_h\Vert u_h\Vert_V\Vert v_h\Vert_V \quad \forall u_h,v_h\in V_h,
  \end{align*}
  with $c_h\to0$ as $h\to0$, then the $b_h$ satisfies the discrete inf-sup conditions uniformly for sufficiently small $h$ since
  \begin{align*}
    \vert b_h(u_h,v_h)\vert \geq \vert b(u_h,v_h)\vert - c_{h_0}\Vert u_h\Vert_V\Vert v_h\Vert_V,
  \end{align*}
  implies
  \begin{align*}
     & \inf_{u_h\in V_h\setminus\left\{0\right\}}\sup_{v_h\in V_h\setminus\left\{0\right\}}\frac{\vert b_h(u_h,v_h)\vert}{\Vert u_h\Vert_V\Vert v_h\Vert_V} \geq \alpha - c_{h_0} > 0 \quad \forall h\leq h_0, \\
     & \forall v_h\in V_h\setminus\left\{0\right\} ,\quad \sup_{u_h\in V_h\setminus\left\{0\right\}}\vert b_h(u_h,v_h)\vert>0 \quad \forall h\leq h_0,
  \end{align*}
  for some $h_0>0$.
\end{remark}

%%%%%%%%%%%%%%%%%%%%%%%%%%%%%%%%%%%%%%%%%%%%%%%%%%%%%%%%%%%%%%%%%%%%%%%%%%%%%%%%%%%%%%%%%%%%%%%%%%%%%%%%%%%%%%%%%%%%%%%%%%%%%%%%%%%%%%%%%%%%%%%%%%%%%%
\section{Geometric estimates and approximation properties}\label{app:geometry}

%%%%%%%%%%%%%%%%%%%%%%%%%%%%%%%%%%%%%%%%%%%%%%%%%%%%%%%%%%%%%%%%%%%%%%%%%%%%%%%%%%%%%%%%%%%%%%%%%%%%%%%%%%%%%%%%%%%%%%%%%%%%%%%%%%%%%%%%%%%%%%%%%%%%%%
\subsection{Geometric estimates}

We list useful results, which can be found in \cite[Lems.~2 \& 3]{nedelec1976} and the proof of \cite[Lem.~4.9]{nedelec1977}; see also \cite[Lem.~8.4.11]{sauter2011}, \cite[Lem.~8.4.12]{sauter2011}, and \cite[Lem.~8.4.14]{sauter2011}.

%%%%%%%%%%%%%%%%%%%%%%%%%%%%%%%%%%%%%%%%%%%%%%%%%%%%%%%%%%%%%%%%%%%%%%%%%%%%%%%%%%%%%%%%%%%%%%%%%%%%%%%%%%%%%%%%%%%%%%%%%%%%%%%%%%%%%%%%%%%%%%%%%%%%%%
\begin{lemma}\label{lem:geometry}
  For all points $\bs{x}$ and $\bs{y}$ on $\Gamma$,
  \begin{align*}
     & \vert 1 - J^{-1}_h(\bs{x})\vert \leq c h^{\ell+1}, \qquad \vert 1 - J^{-1}_h(\bs{y})J^{-1}_h(\bs{x})\vert \leq c h^{\ell+1}, \qquad \vert\bs{x} - \Psi^{-1}_h(\bs{x}) \vert \leq c h^{\ell+1},                                                            \\
     & \vert\bs{n}(\bs{x})-\hat{\bs{n}}_h(\bs{x})\vert \leq c h^{\ell} \;\; \text{(normal to the element)}, \quad \vert\bs{n}(\bs{x})-\hat{\bs{\nu}}_h(\bs{x})\vert \leq c h^{\ell+1} \;\; \text{(interpolated normal)},                                                                          \\
     & c\vert\Psi^{-1}_h(\bs{x})-\Psi^{-1}_h(\bs{y})\vert \leq \vert\bs{x}-\bs{y}\vert \leq c \vert\Psi^{-1}_h(\bs{x})-\Psi^{-1}_h(\bs{y})\vert,                                                                                                                 \\
     & \left\vert\vert\bs{x}-\bs{y}\vert^{-1} - \vert\Psi^{-1}_h(\bs{x})-\Psi^{-1}_h(\bs{y})\vert^{-1}\right\vert \leq c h^{\ell+1}\vert\bs{x}-\bs{y}\vert^{-1},                                                                                                 \\
     & \left\vert\vert\bs{x}-\bs{y}\vert^{-2} - \vert\Psi^{-1}_h(\bs{x})-\Psi^{-1}_h(\bs{y})\vert^{-2}\right\vert \leq c h^{\ell+1}\vert\bs{x}-\bs{y}\vert^{-2},                                                                                                 \\
     & \left\vert\vert\bs{x}-\bs{y}\vert^{-3} - \vert\Psi^{-1}_h(\bs{x})-\Psi^{-1}_h(\bs{y})\vert^{-3}\right\vert \leq c h^{\ell+1}\vert\bs{x}-\bs{y}\vert^{-3},                                                                                                 \\
     & \left\vert\frac{e^{ik\vert\bs{x}-\bs{y}\vert}}{\vert\bs{x}-\bs{y}\vert} - \frac{e^{ik\vert\Psi^{-1}_h(\bs{x})-\bs{y}\vert}}{\vert\Psi^{-1}_h(\bs{x}) - \bs{y}\vert}\right\vert \leq c h^{\ell+1},                                                        \quad  \left\vert \frac{e^{ik\vert\bs{x}-\bs{y}\vert}}{\vert\bs{x}-\bs{y}\vert} - \frac{e^{ik \vert\Psi^{-1}_h(\bs{x})-\Psi^{-1}_h(\bs{y})\vert}}{\vert\Psi^{-1}_h(\bs{x})-\Psi^{-1}_h(\bs{y})\vert}\right\vert \leq c h^{\ell+1}\vert\bs{x} - \bs{y}\vert^{-1}, \\
     & \left\vert e^{ik \lvert \bs{x}-\bs{y}\rvert} - e^{ik \lvert \bs{x}-\Psi^{-1}_h(\bs{y})\rvert}\right\vert \leq c h^{\ell+1},                                                                                                                               \quad  \left\vert e^{ik\vert\bs{x}-\bs{y}\vert}-e^{ik\vert\Psi^{-1}_h(\bs{x})-\Psi^{-1}_h(\bs{y})\vert} \right\vert \leq c h^{\ell+1} \lvert\bs{x}-\bs{y} \rvert,                                                                                                \\
     & \left\vert \lvert \bs{x} - \bs{y}\rvert e^{ik \lvert \bs{x}-\bs{y}\rvert} - \lvert \bs{x} - \Psi^{-1}_h(\bs{y})\rvert e^{ik \lvert \bs{x}-\Psi^{-1}_h(\bs{y})\rvert}\right\vert\leq ch^{\ell+1}.
  \end{align*}
\end{lemma}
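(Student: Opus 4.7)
The plan is to reduce every listed bound to two fundamental ingredients that are consequences of Lagrange interpolation theory applied to the smooth parametrization of $\Gamma$:
(i) the $C^0$ approximation estimate $\|\Psi_h - \mathrm{id}\|_{L^\infty(\Gamma_h)} \leq c h^{\ell+1}$, equivalent to $\|\Psi_h^{-1} - \mathrm{id}\|_{L^\infty(\Gamma)} \leq c h^{\ell+1}$ by the inverse function theorem applied uniformly in $h$; and (ii) the derivative estimates $\|D^k(\Psi_h^{-1} - \mathrm{id})\|_{L^\infty} \leq c h^{\ell+1-k}$ for $1\leq k \leq \ell+1$. These are proved element-by-element: on each curved triangle, the map $\Psi_h^{-1}$ restricted to $\Psi_h(K_h)$ is (up to a local chart) the Lagrange interpolant of degree $\ell$ of a $C^\infty$ parametrization of $\Gamma$, and the Bramble--Hilbert lemma together with shape regularity gives the stated rates.

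From (i), the displacement bound $|\bs{x}-\Psi_h^{-1}(\bs{x})|\leq c h^{\ell+1}$ is immediate, and from (ii) at $k=1$ one gets $D\Psi_h^{-1}=I+O(h^\ell)$, which is uniformly invertible for small $h$ and therefore yields the bi-Lipschitz equivalence $c|\bs{x}-\bs{y}|\leq |\Psi_h^{-1}(\bs{x})-\Psi_h^{-1}(\bs{y})|\leq c|\bs{x}-\bs{y}|$. The power-of-distance estimates follow by elementary algebra: write $a^{-k}-b^{-k}=(b^k-a^k)/(ab)^k$ with $a=|\bs{x}-\bs{y}|$ and $b=|\Psi_h^{-1}(\bs{x})-\Psi_h^{-1}(\bs{y})|$, expand $a^2-b^2$ as an inner product of $\bs{x}-\bs{y}-(\Psi_h^{-1}(\bs{x})-\Psi_h^{-1}(\bs{y}))=O(h^{\ell+1})$ with $\bs{x}-\bs{y}+(\Psi_h^{-1}(\bs{x})-\Psi_h^{-1}(\bs{y}))=O(|\bs{x}-\bs{y}|)$ to obtain $|a^2-b^2|\leq c h^{\ell+1}|\bs{x}-\bs{y}|$, and combine with the equivalence. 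The element-normal estimate $|\bs{n}-\hat{\bs{n}}_h|\leq ch^\ell$ comes from writing both normals as normalized cross products of tangent vectors and invoking (ii) with $k=1$; the interpolated-normal bound $|\bs{n}-\hat{\bs{\nu}}_h|\leq ch^{\ell+1}$ is by definition a Lagrange interpolation error for the smooth field $\bs{n}$, yielding one extra order. All Helmholtz-type estimates reduce, via the mean value theorem applied to the smooth maps $r\mapsto e^{ikr}$ and $r\mapsto r\,e^{ikr}$, to the preceding geometric bounds.

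The main obstacle is the Jacobian estimate $|1-J_h^{-1}(\bs{x})|\leq c h^{\ell+1}$. Since $J_h^{-1}$ is a $2\times 2$ surface Jacobian determinant built from tangential derivatives of $\Psi_h^{-1}$, the naive application of (ii) with $k=1$ would only give $O(h^\ell)$. The improvement to $h^{\ell+1}$ is a genuine feature of interpolation against a smooth manifold: the leading-order term in $D\Psi_h^{-1}-I$ is a tangential vector field whose contribution to $\det(D\Psi_h^{-1})$, expanded around the true metric of $\Gamma$, cancels to leading order because the interpolation nodes lie exactly on $\Gamma$ and because the normal component of the interpolation error already has the higher rate $h^{\ell+1}$. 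This is precisely the content of Lemma~2 of \cite{nedelec1976} and Lemma~8.4.11 of \cite{sauter2011}, and I would invoke it directly. The product estimate $|1-J_h^{-1}(\bs{x})J_h^{-1}(\bs{y})|\leq ch^{\ell+1}$ then follows by writing $1-ab=(1-a)+a(1-b)$ with $|a|,|b|\leq c$ uniformly in $h$.
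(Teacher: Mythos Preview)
The paper does not actually prove this lemma: it simply records the estimates and points to \cite{nedelec1976,nedelec1977,sauter2011}. Your outline is therefore more detailed than what the paper offers, and most of your reductions are sound---the displacement bound, the normal estimates, the mean-value reduction for the Helmholtz kernels, and your candid acknowledgment that the Jacobian estimate $|1-J_h^{-1}|\leq ch^{\ell+1}$ requires the cancellation argument from the cited references.

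There is, however, a genuine gap in your treatment of the power-of-distance bounds. With $a=|\bs{x}-\bs{y}|$ and $b=|\Psi_h^{-1}(\bs{x})-\Psi_h^{-1}(\bs{y})|$, your inner-product decomposition gives only $|a^2-b^2|\leq ch^{\ell+1}|\bs{x}-\bs{y}|$, hence $|a-b|\leq ch^{\ell+1}$ and therefore $|a^{-1}-b^{-1}|=|a-b|/(ab)\leq ch^{\ell+1}|\bs{x}-\bs{y}|^{-2}$, which is one power of $|\bs{x}-\bs{y}|$ short of the stated $ch^{\ell+1}|\bs{x}-\bs{y}|^{-1}$. The same shortfall propagates to the $|\cdot|^{-2}$ and $|\cdot|^{-3}$ bounds and to the Helmholtz estimate $|e^{ika}-e^{ikb}|\leq ch^{\ell+1}|\bs{x}-\bs{y}|$, all of which require the sharper bound $|a-b|\leq ch^{\ell+1}|\bs{x}-\bs{y}|$.

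The missing ingredient is the \emph{normal structure} of the orthogonal projection. Since $\Psi_h^{-1}(\bs{x})=\bs{x}+d(\bs{x})\bs{n}(\bs{x})$ with $|d|\leq ch^{\ell+1}$, one expands
\[
b^2-a^2 = 2d(\bs{x})\,(\bs{x}-\bs{y})\cdot\bs{n}(\bs{x}) - 2d(\bs{y})\,(\bs{x}-\bs{y})\cdot\bs{n}(\bs{y}) + |d(\bs{x})\bs{n}(\bs{x})-d(\bs{y})\bs{n}(\bs{y})|^2,
\]
and on a smooth surface $(\bs{x}-\bs{y})\cdot\bs{n}(\bs{x})=O(|\bs{x}-\bs{y}|^2)$, while the quadratic remainder is $O(h^{2\ell}|\bs{x}-\bs{y}|^2)$ by your own $C^1$ estimate. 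This yields $|b^2-a^2|\leq ch^{\ell+1}|\bs{x}-\bs{y}|^2$, and everything then follows. It is exactly the ``normal component gains one order'' mechanism you correctly invoked for the Jacobian; it is needed here too, and is precisely the content of \cite[Lem.~3]{nedelec1976} and \cite[Lem.~8.4.12]{sauter2011}.
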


%%%%%%%%%%%%%%%%%%%%%%%%%%%%%%%%%%%%%%%%%%%%%%%%%%%%%%%%%%%%%%%%%%%%%%%%%%%%%%%%%%%%%%%%%%%%%%%%%%%%%%%%%%%%%%%%%%%%%%%%%%%%%%%%%%%%%%%%%%%%%%%%%%%%%%
\subsection{Approximation properties}

The results in this section can be found in \cite[Lem.~4]{nedelec1976} and in the proof of \cite[Thm.~4.6]{nedelec1977}.

%%%%%%%%%%%%%%%%%%%%%%%%%%%%%%%%%%%%%%%%%%%%%%%%%%%%%%%%%%%%%%%%%%%%%%%%%%%%%%%%%%%%%%%%%%%%%%%%%%%%%%%%%%%%%%%%%%%%%%%%%%%%%%%%%%%%%%%%%%%%%%%%%%%%%%
\begin{lemma}\label{lem:approximation-SL}
  Let $\{V_h\}_{h>0}$ be a dense sequence of finite-dimensional subspaces of $H^{-1/2}(\Gamma_h)$, consisting of continuous piecewise polynomials of degree at most \(m\) on each triangle in \(\Gamma_h\), e.g., continuous Lagrange finite elements \cite{ern2021a}. Define
  \begin{align*}
    \hat{V}_h = \{\hat{p}_h=p_h\circ\Psi^{-1}_h,\,p_h\in V_h\} \subset H^{-1/2}(\Gamma).
  \end{align*}
  Let $\hat{s}_h$ be the $L^2(\Gamma)$-orthogonal projector onto $\hat{V}_h$. Then
  \begin{align*}
     & \Vert\hat{p}_h\Vert_{0} \leq c h^{-1/2}\Vert\hat{p}_h\Vert_{-1/2} \quad \forall \hat{p}_h\in\hat{V}_h,  \\
     & \Vert\hat{p}_h\Vert_{1/2} \leq c h^{-1/2}\Vert\hat{p}_h\Vert_{0} \quad \forall \hat{p}_h\in\hat{V}_h,   \\
     & \Vert\hat{s}_hp\Vert_{0} \leq \Vert p\Vert_{0} \quad \forall p\in L^2(\Gamma),                          \\
     & \Vert p - \hat{s}_hp\Vert_{0} \leq c h^{m+1}\Vert p\Vert_{m+1} \quad \forall p\in H^{m+1}(\Gamma),      \\
     & \Vert p - \hat{s}_hp\Vert_{-1/2} \leq c h^{m+3/2}\Vert p\Vert_{m+1} \quad \forall p\in H^{m+1}(\Gamma).
  \end{align*}
\end{lemma}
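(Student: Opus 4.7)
\textbf{Proof proposal for \cref{lem:approximation-SL}.} The plan is to reduce each estimate to a classical result on the discrete surface $\Gamma_h$ and then transfer it to $\Gamma$ via the pullback $\Psi_h^{-1}$. Since $\Psi_h$ is smooth with Jacobian satisfying $\vert 1 - J_h^{-1}\vert \leq c h^{\ell+1}$ (\cref{lem:geometry}), and since $\Psi_h$, $\Psi_h^{-1}$ and their derivatives are uniformly bounded with respect to $h$, the Sobolev norms of order $s\in[-1,1]$ on $\Gamma$ and on $\Gamma_h$ of lifted/unlifted functions are equivalent with constants independent of $h$. This equivalence is the universal tool used throughout.

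For the two inverse inequalities, I would invoke the classical inverse estimates for continuous piecewise polynomials of degree $m$ on shape-regular triangulations, namely $\Vert p_h\Vert_{H^t(\Gamma_h)} \leq c h^{s-t}\Vert p_h\Vert_{H^s(\Gamma_h)}$ for $-1\leq s\leq t\leq 1$ (see \cite[Thm.~12.1]{ern2021a} or \cite[Thm.~4.4.2]{sauter2011}), applied with $(s,t)=(-1/2,0)$ and $(0,1/2)$. Norm equivalence between $V_h$ and $\hat V_h$ then yields the stated bounds on $\hat V_h$. For estimate~3, orthogonality of $\hat{s}_h$ in $L^2(\Gamma)$ gives $\Vert \hat{s}_h p\Vert_0 \leq \Vert p\Vert_0$ directly by Pythagoras.

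For estimate~4, I would exploit the quasi-optimality of the $L^2$-projection, $\Vert p - \hat{s}_h p\Vert_0 \leq \Vert p - \hat{I}_h p\Vert_0$ for any $\hat{I}_h p\in\hat{V}_h$, and take $\hat{I}_h p = (I_h (p\circ\Psi_h))\circ\Psi_h^{-1}$ with $I_h$ the Lagrange interpolant on $\Gamma_h$. Standard element-by-element Bramble--Hilbert estimates on $\Gamma_h$ give $\Vert p\circ\Psi_h - I_h(p\circ\Psi_h)\Vert_{L^2(\Gamma_h)}\leq c h^{m+1}\Vert p\circ\Psi_h\Vert_{H^{m+1}(\Gamma_h)}$. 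Pulling back and using the boundedness of derivatives of $\Psi_h$ up to order $m+1$ (a consequence of the $C^\infty$ regularity of $\Gamma$ and the shape regularity of $\{\Gamma_h\}$) yields estimate~4.

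For estimate~5, I would use the Aubin--Nitsche duality trick. Writing
\begin{align*}
  \Vert p - \hat{s}_h p\Vert_{-1/2} = \sup_{g\in H^{1/2}(\Gamma)\setminus\{0\}} \frac{\vert (p - \hat{s}_h p, g)\vert}{\Vert g\Vert_{1/2}},
\end{align*}
and exploiting the $L^2$-orthogonality of $\hat{s}_h$ to replace $g$ by $g - \hat{s}_h g$, Cauchy--Schwarz then gives $\vert (p - \hat{s}_h p, g - \hat{s}_h g)\vert \leq \Vert p - \hat{s}_h p\Vert_0 \Vert g - \hat{s}_h g\Vert_0$. The first factor is controlled by estimate~4 as $c h^{m+1}\Vert p\Vert_{m+1}$. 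For the second, I would interpolate between $\Vert g - \hat{s}_h g\Vert_0 \leq \Vert g\Vert_0$ (stability) and $\Vert g - \hat{s}_h g\Vert_0\leq c h\Vert g\Vert_1$ (which is the $s=1$ version of estimate~4, $m\geq 0$) to obtain $\Vert g - \hat{s}_h g\Vert_0 \leq c h^{1/2}\Vert g\Vert_{1/2}$. Combining the two factors delivers the $h^{m+3/2}$ rate. The main obstacle is cleanly handling the geometric lift: the inverse and Bramble--Hilbert estimates are classical on flat meshes, but one must verify that the lift constants (through $\Psi_h$, its Jacobian $J_h$, and its derivatives) remain bounded uniformly in $h$, using the smoothness of $\Gamma$ together with the shape-regularity and order-$\ell$ consistency of $\Gamma_h$.
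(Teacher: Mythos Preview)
The paper does not actually prove \cref{lem:approximation-SL}; it merely states the result and defers to \cite[Lem.~4]{nedelec1976} and the proof of \cite[Thm.~4.6]{nedelec1977}. Your proposal is therefore not comparable to a ``paper's own proof'' but rather fills in what the paper omits. The argument you sketch---inverse inequalities on the flat mesh transferred by uniform norm equivalence under the lift, $L^2$-stability by orthogonality, Bramble--Hilbert for the $L^2$ rate, and Aubin--Nitsche duality with operator interpolation for the $H^{-1/2}$ rate---is precisely the standard route one finds in the cited references and in \cite[\S4.3, \S4.4]{sauter2011}, and it is correct.

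One small point worth tightening: in estimate~5 you interpolate the map $g\mapsto g-\hat s_h g$ between $L^2\to L^2$ (norm $\leq 1$) and $H^1\to L^2$ (norm $\leq ch$). This is legitimate since $I-\hat s_h$ is a linear operator and $[L^2(\Gamma),H^1(\Gamma)]_{1/2}=H^{1/2}(\Gamma)$, but note that the $H^1\to L^2$ bound requires the discrete space to contain at least piecewise constants, which it does for any $m\geq0$. Also, the uniform norm equivalence you invoke for $H^{\pm 1/2}$ between $\Gamma$ and $\Gamma_h$ is not entirely trivial for fractional orders; it follows by interpolating the $L^2$ and $H^{\pm1}$ equivalences (themselves consequences of the uniform boundedness of $\Psi_h$, $\Psi_h^{-1}$, $J_h$, $J_h^{-1}$ and their first derivatives), but you should say so explicitly rather than leave it as a remark at the end.
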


%%%%%%%%%%%%%%%%%%%%%%%%%%%%%%%%%%%%%%%%%%%%%%%%%%%%%%%%%%%%%%%%%%%%%%%%%%%%%%%%%%%%%%%%%%%%%%%%%%%%%%%%%%%%%%%%%%%%%%%%%%%%%%%%%%%%%%%%%%%%%%%%%%%%%%
\begin{lemma}\label{lem:approximation-DL}
  Let $\{V_h\}_{h>0}$ be a dense sequence of finite-dimensional subspaces of $L^2(\Gamma_h)$, consisting of continuous piecewise polynomials of degree at most \(m\) on each triangle in \(\Gamma_h\). Define
  \begin{align*}
    \hat{V}_h = \{\hat{p}_h=p_h\circ\Psi^{-1}_h,\,p_h\in V_h\} \subset L^2(\Gamma).
  \end{align*}
  Let $\hat{s}_h$ be the $L^2(\Gamma)$-orthogonal projector onto $\hat{V}_h$. Then
  \begin{align*}
     & \Vert\hat{p}_h\Vert_{1/2} \leq c h^{-1/2}\Vert\hat{p}_h\Vert_{0} \quad \forall \hat{p}_h\in\hat{V}_h,             \\
     & \Vert\hat{s}_hp\Vert_{0} \leq \Vert p\Vert_{0} \quad \forall p\in L^2(\Gamma),                                    \\
     & \Vert\hat{s}_hp\Vert_{\epsilon} \leq \Vert p\Vert_{1} \quad \forall p\in H^1(\Gamma), \; \forall\epsilon\in(0,1), \\
     & \Vert p - \hat{s}_hp\Vert_{0} \leq c h^{m+1}\Vert p\Vert_{m+1} \quad \forall p\in H^{m+1}(\Gamma),                \\
     & \Vert p - \hat{s}_hp\Vert_{1/2} \leq c h^{m+1/2}\Vert p\Vert_{m+1} \quad \forall p\in H^{m+1}(\Gamma).
  \end{align*}
\end{lemma}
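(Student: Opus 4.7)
The plan is to reduce everything to standard continuous Lagrange finite element theory on the piecewise polynomial surface \(\Gamma_h\) and then transfer the estimates to \(\Gamma\) via the diffeomorphism \(\Psi_h : \Gamma_h \to \Gamma\). Since \(\Gamma\) is smooth and \(\Gamma_h\) converges to \(\Gamma\) in a sufficiently smooth manner, \cref{lem:geometry} and \cref{subsec:geometry} guarantee that both \(\Psi_h\) and \(\Psi_h^{-1}\) are uniformly smooth with Jacobians bounded away from \(0\) and \(\infty\) independently of \(h\), so that pullback and pushforward under \(\Psi_h\) induce equivalences of \(H^s\)-norms for \(s\in[0,1]\) with constants independent of \(h\). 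In particular it suffices to prove the analogous statements on \(\Gamma_h\) for the space \(V_h\) of continuous piecewise polynomials of degree \(\leq m\) and for the \(L^2(\Gamma_h)\)-orthogonal projector \(s_h\), then transport them to \(\hat{V}_h\) and \(\hat{s}_h\).

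First I would prove the inverse inequality \(\Vert \hat{p}_h\Vert_{1/2}\leq c h^{-1/2}\Vert \hat{p}_h\Vert_0\). By the norm equivalence under \(\Psi_h\), this reduces to the standard inverse estimate \(\Vert p_h\Vert_{H^{1/2}(\Gamma_h)}\leq ch^{-1/2}\Vert p_h\Vert_{L^2(\Gamma_h)}\) on shape-regular, quasi-uniform meshes (see, e.g., \cite[Cor.~12.9]{ern2021a}), which itself follows from element-wise inverse estimates on the reference triangle and interpolation between \(L^2\) and \(H^1\). Next, the bound \(\Vert\hat{s}_hp\Vert_0\leq\Vert p\Vert_0\) is immediate from the definition of \(\hat{s}_h\) as an \(L^2(\Gamma)\)-orthogonal projector.

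For the \(H^{\epsilon}\)-stability \(\Vert\hat{s}_hp\Vert_\epsilon\leq c\Vert p\Vert_1\) for all \(\epsilon\in(0,1)\), the plan is to invoke the \(H^1\)-stability of the \(L^2\)-orthogonal projector onto continuous Lagrange spaces on shape-regular, quasi-uniform meshes (going back to Bramble--Pasciak--Steinbach; see also \cite[Thm.~22.6]{ern2021a}), which yields \(\Vert \hat{s}_hp\Vert_1 \leq c\Vert p\Vert_1\). Combined with \(L^2\)-stability, interpolation between \(L^2(\Gamma)\) and \(H^1(\Gamma)\) gives \(\Vert\hat{s}_hp\Vert_\epsilon\leq c\Vert p\Vert_\epsilon\leq c\Vert p\Vert_1\). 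For the \(L^2\)-approximation bound, I would introduce the standard Lagrange interpolant \(I_h : C^0(\Gamma_h)\to V_h\), use the Bramble--Hilbert lemma to obtain \(\Vert p\circ\Psi_h - I_h(p\circ\Psi_h)\Vert_{L^2(\Gamma_h)}\leq ch^{m+1}\Vert p\circ\Psi_h\Vert_{H^{m+1}(\Gamma_h)}\), transport to \(\Gamma\) to get an element \(\hat{I}_hp\in\hat{V}_h\) with \(\Vert p-\hat{I}_hp\Vert_0\leq ch^{m+1}\Vert p\Vert_{m+1}\), and then conclude by the best-approximation property of the \(L^2\)-projector \(\Vert p-\hat{s}_hp\Vert_0 \leq \Vert p-\hat{I}_hp\Vert_0\).

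Finally, for the \(H^{1/2}\)-approximation, I would split
\[
\Vert p - \hat{s}_hp\Vert_{1/2} \leq \Vert p - \hat{I}_hp\Vert_{1/2} + \Vert \hat{I}_hp - \hat{s}_hp\Vert_{1/2}.
\]
The first term is bounded by \(ch^{m+1/2}\Vert p\Vert_{m+1}\) via the standard Lagrange interpolation estimate in \(H^{1/2}\) (obtained, as above, by pullback to \(\Gamma_h\) and element-wise Bramble--Hilbert interpolated between \(L^2\) and \(H^1\)). The second term is estimated using the inverse inequality already established, followed by the \(L^2\) triangle inequality:
\[
\Vert \hat{I}_hp - \hat{s}_hp\Vert_{1/2} \leq ch^{-1/2}\Vert \hat{I}_hp - \hat{s}_hp\Vert_0 \leq ch^{-1/2}\bigl(\Vert p - \hat{I}_hp\Vert_0 + \Vert p - \hat{s}_hp\Vert_0\bigr)\leq ch^{m+1/2}\Vert p\Vert_{m+1}.
\]
The main obstacle I anticipate is the careful handling of the fractional \(H^{1/2}\) and \(H^\epsilon\) norms under the curved pullback \(\Psi_h\); this is the reason we invoke the Sobolev--Slobodeckij characterization and only use \(s\in[0,1]\), so that interpolation between \(L^2(\Gamma)\) and \(H^1(\Gamma)\) is well behaved, and the bi-Lipschitz smoothness of \(\Psi_h\) suffices to transport the estimates between \(\Gamma_h\) and \(\Gamma\) with constants independent of \(h\).
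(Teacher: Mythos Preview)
The paper does not actually prove this lemma: it is stated in the appendix with only the sentence ``The results in this section can be found in \cite[Lem.~4]{nedelec1976} and in the proof of \cite[Thm.~4.6]{nedelec1977}.'' So there is no proof to compare against; your sketch is the only argument on the table, and it follows the standard route one would expect from those references---pull back to \(\Gamma_h\), invoke shape-regular inverse estimates and Bramble--Hilbert, then push forward.

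Your outline is essentially correct, but two small points deserve tightening. First, you write that it suffices to prove the statements for the \(L^2(\Gamma_h)\)-projector \(s_h\) and then ``transport'' to \(\hat{s}_h\); strictly speaking \(\hat{s}_h\) is \emph{not} the pushforward of \(s_h\), since the two inner products differ by the Jacobian \(J_h^{-1}\). This does not actually bite, because in your detailed arguments for items (2), (4), (5) you work directly on \(\Gamma\) via best approximation and the inverse inequality, and for item (3) the usual proof of \(H^1\)-stability (split \(\hat{s}_hp\) through an interpolant, apply the inverse inequality, use the \(L^2\) best-approximation) also runs directly on \(\Gamma\) without identifying the two projectors. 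Second, the Lagrange interpolant \(I_h\) needs pointwise values, hence \(p\in C^0\); on the two-dimensional surface \(\Gamma\) this is guaranteed by \(p\in H^{m+1}(\Gamma)\) only when \(m\geq1\). For \(m=0\), and for the \(H^1\)-stability argument where you only assume \(p\in H^1(\Gamma)\), you should replace \(I_h\) by a Cl\'ement or Scott--Zhang quasi-interpolant. With those two adjustments the proof goes through.
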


%%%%%%%%%%%%%%%%%%%%%%%%%%%%%%%%%%%%%%%%%%%%%%%%%%%%%%%%%%%%%%%%%%%%%%%%%%%%%%%%%%%%%%%%%%%%%%%%%%%%%%%%%%%%%%%%%%%%%%%%%%%%%%%%%%%%%%%%%%%%%%%%%%%%%%
\section*{Acknowledgments}

We are grateful to Marc Lenoir for providing us with his private copy of the technical report \cite{nedelec1977}, and to the staff at \'{E}cole Polytechnique for granting access to the second (and possibly only remaining) physical copy and for digitizing it. We also thank \'{E}ric Luneville for valuable discussions on the convergence of boundary element methods, and Ivan Graham and Houssem Haddar for their encouragement. This paper is dedicated to Jean-Claude N\'{e}d\'{e}lec, in recognition of his inspiring contributions to boundary element methods and numerical analysis.

%%%%%%%%%%%%%%%%%%%%%%%%%%%%%%%%%%%%%%%%%%%%%%%%%%%%%%%%%%%%%%%%%%%%%%%%%%%%%%%%%%%%%%%%%%%%%%%%%%%%%%%%%%%%%%%%%%%%%%%%%%%%%%%%%%%%%%%%%%%%%%%%%%%%%%
\bibliographystyle{siam}
\bibliography{_references.bib}

%%%%%%%%%%%%%%%%%%%%%%%%%%%%%%%%%%%%%%%%%%%%%%%%%%%%%%%%%%%%%%%%%%%%%%%%%%%%%%%%%%%%%%%%%%%%%%%%%%%%%%%%%%%%%%%%%%%%%%%%%%%%%%%%%%%%%%%%%%%%%%%%%%%%%%
\end{document}